\crefname{appendix}{}{}
\newcommand{\Dsquare}[1]{\left\llbracket #1 \right\rrbracket}
\newcommand{\V}[2][]{{\bm{#1\mathbf{\MakeLowercase{#2}}}}} 
\newcommand{\VE}[3][]{#1{\MakeLowercase{#2}}_{#3}} 
\newcommand{\VnE}[4][]{#1{\MakeLowercase{#2}}^{(#3)}_{#4}} 
\newcommand{\M}[2][]{{\bm{#1\mathbf{\MakeUppercase{#2}}}}} 
\newcommand{\Mn}[3][]{{\bm{#1\mathbf{\MakeUppercase{#2}}}}^{(#3)}} 
\newcommand{\ME}[3][]{#1{\MakeLowercase{#2}}_{#3}} 
\newcommand{\MnE}[4][]{#1{\MakeLowercase{#2}}^{(#3)}_{#4}} 
\let\olddefinition\example
\renewcommand{\example}{\olddefinition\normalfont}
\newcommand{\eg}{e.g.}
\newcommand{\ie}{i.e.}
\newcommand{\etal}{et al.}
\newcommand{\diag}[1]{\operatorname{diag}\parenths*{#1}}
\newcommand{\bc}[3]{\lt\langle#1, #2, #3\rt\rangle}
\newcommand{\alg}[3]{\Dsquare{#1, #2, #3}}
\newcommand{\VP}[2]{\M{\cal P}_{#1,#2}}
\newcommand{\qmax}{Q}
\newcommand{\emax}{E}
\newcommand{\II}{\mathbb{I}}
\newcommand{\lt}{\left}
\newcommand{\rt}{\right}
\newcommand{\leftscalemat}{\M{D}_{A}}
\newcommand{\rightscalemat}{\M{D}_{B}}
\newcommand{\inscalemat}{\M{D}}
\DeclarePairedDelimiter{\abs}{\lvert}{\rvert}
\DeclarePairedDelimiter{\norm}{\lVert}{\rVert}
\DeclarePairedDelimiter{\parenths}{\lparen}{\rparen}
\newcommand{\Ostep}{\emph{O}~step}
\newcommand{\Osteps}{\emph{O}~steps}
\newcommand{\Istep}{\emph{I}~step}
\newcommand{\Isteps}{\emph{I}~steps}
\newcommand{\Sn}[2]{#1^{(#2)}} 
\newcommand{\pptk}[2]{p^{\prime(#1)}_{#2}}
\newcommand{\rpti}[2]{r^{\prime(#1)}_{#2}}
\newcommand{\sptj}[2]{s^{\prime(#1)}_{#2}}
\newcommand{\At}[1][t]{\Mn{A\!}{#1}}
\newcommand{\Bt}[1][t]{\Mn{B}{#1}}
\newcommand{\partdimm}{M_{0}}
\newcommand{\partdimk}{K_{0}}
\newcommand{\partdimn}{N_{0}}
\newcommand{\smallspace}{\vspace{0.2cm}}
\newcommand{\numbereq}{\stepcounter{equation}\tag{\theequation}}
\newcommand{\smallentry}{z}
\title{Improving the numerical stability of\\fast matrix multiplication}
\author{
Grey Ballard\thanks{Sandia National Laboratories, Livermore, California.  Current address: Computer Science Department, Wake Forest University, Winston Salem, North Carolina  (\email{ballard@wfu.edu})}.
This author's work was supported by an appointment to the Sandia National Laboratories Truman Fellowship in National Security Science and Engineering, sponsored by Sandia Corporation (a wholly owned subsidiary of Lockheed Martin Corporation) as Operator of Sandia National Laboratories under its U.S. Department of Energy Contract No. DE-AC04-94AL85000.
\and
Austin R.~Benson\thanks{Institute for Computational and Mathematical Engineering, Stanford University, Stanford, California. (\email{arbenson@stanford.edu})}.
This author's work
was supported by an Office of Technology Licensing Stanford Graduate Fellowship.
\and
Alex Druinsky\thanks{Lawrence Berkeley National Laboratory, Berkeley, California. (\email{adruinsky@lbl.gov})}
This material is based upon work supported by the US Department of Energy (DOE), Office of Science, Office of Advanced Scientific Computing Research (ASCR), Applied Mathematics program under contract number DE-AC02-05CH11231.
\and
Benjamin Lipshitz\thanks{San Bruno, California (\email{benjamin.lipshitz@gmail.com}).}
\and
Oded Schwartz\thanks{The Hebrew University, Jerusalem, Israel (\email{odedsc@cs.huji.ac.il})
Research is supported by grants 1878/14 and 1901/14 from the Israel Science Foundation (founded by the Israel Academy of Sciences and Humanities) and grant 3-10891 from the Ministry of Science and Technology, Israel; the Einstein Foundation and the Minerva Foundation; the Intel Collaborative Research Institute for Computational Intelligence (ICRI-CI); a grant from the United States-Israel Binational Science Foundation (BSF), Jerusalem, Israel; and the HUJI Cyber Security Research Center in conjunction with the Israel National Cyber Bureau in the Prime Minister's Office.
}
}
\begin{document}
\maketitle
\slugger{simax}{xxxx}{xx}{x}{x--x}

\begin{abstract}
Fast algorithms for matrix multiplication, namely those that perform
asymptotically fewer scalar operations than the classical algorithm, have been
considered primarily of theoretical interest.  Apart from Strassen's original
algorithm, few fast algorithms have been efficiently implemented or used in
practical applications.  However, there exist many practical alternatives to
Strassen's algorithm with varying performance and numerical properties.  Fast
algorithms are known to be numerically stable, but because their error bounds
are slightly weaker than the classical algorithm, they are not used even in
cases where they provide a performance benefit.

We argue in this paper that the numerical sacrifice of fast algorithms,
particularly for the typical use cases of practical algorithms, is not
prohibitive, and we explore ways to improve the accuracy both theoretically and
empirically.  The numerical accuracy of fast matrix multiplication depends on
properties of the algorithm and of the input matrices, and we consider both
contributions independently.  We generalize and tighten previous error analyses
of fast algorithms and compare their properties.  We discuss algorithmic
techniques for improving the error guarantees from two perspectives:
manipulating the algorithms, and reducing input anomalies by various forms of
diagonal scaling.  Finally, we benchmark performance and demonstrate our
improved numerical accuracy.
\end{abstract}

\begin{keywords}
Practical fast matrix multiplication, error bounds, diagonal scaling
\end{keywords}

\pagestyle{myheadings}
\thispagestyle{plain}
\markboth{G. BALLARD, A. BENSON, A. DRUINSKY, B. LIPSHITZ, O. SCHWARTZ}{NUMERICAL STABILITY OF FAST MATRIX MULTIPLICATION}

\section{Introduction}
\label{sec:intro}

After Strassen's discovery of an algorithm for dense matrix-matrix
multiplication in 1969 \cite{strassen1969gaussian} that reduced the
computational complexity from the classical $O(N^3)$ (for multiplying two
$N\times N$ matrices) to $O(N^{\log_2{7}})$, there has been extensive effort to
understand fast matrix multiplication, based on algorithms with computational
complexity exponent less than 3.  From a theoretical perspective, there remains
a gap between the best known lower bound \cite{Landsberg14} and best known upper
bound \cite{Gall14} on the exponent.  From a practical perspective, it is
unlikely that the techniques for obtaining the best upper bounds on the exponent
can be translated to practical algorithms that will execute faster than the
classical one for reasonably sized matrices.  In this paper, we are interested
in the numerical stability of practical algorithms that have been demonstrated
to outperform the classical algorithm (as well as Strassen's in some instances)
on modern hardware \cite{benson2014framework}.

Nearly all fast matrix multiplication algorithms are based on recursion, using a
recursive rule that defines a method for multiplying matrices of fixed dimension
$\partdimm\times\partdimk$ by $\partdimk\times\partdimn$
$\partdimm\partdimk\partdimn$ scalar multiplications.  In this work, we use the
notation $\bc{\partdimm}{\partdimk}{\partdimn}$ for these algorithms.  For
practical algorithms, these fixed dimensions need to be very small, typically
$\partdimm,\partdimk,\partdimn<10$, as they define the factors by which the
dimensions of subproblems are reduced within the recursion.  Many such
algorithms have been recently discovered
\cite{benson2014framework,smirnov2013bilinear}.  Most fast algorithms share a
common bilinear structure and can be compactly represented by three matrices
that we denote by $\alg{\M U}{\M V}{\M W}$, following the notation of Bini and
Lotti~\cite{bini1980stability}.  Many key properties of the practicality of an
algorithm, including its numerical stability, can be derived quickly from its
$\alg{\M U}{\M V}{\M W}$ representation.  We also note that, because recursive
subproblems are again matrix multiplications, different recursive rules can be
combined arbitrarily.  Following the terminology of Ballard
\etal~\cite{BDHS11-SLB} and Demmel \etal~\cite{demmel2006fast}, we refer to
algorithms that vary recursive rules across different recursive levels and
within each level as \emph{non-uniform, non-stationary} algorithms.  If an
algorithm uses the same rule for every subproblem in each recursive level but
varies the rule across levels, we call it a \emph{uniform, non-stationary}
algorithm; those defined by only one rule are called \emph{stationary}
algorithms.  

Fast matrix multiplication is known to yield larger numerical errors than the
classical algorithm.  The forward error guarantee for the classical algorithm is
component-wise: the error bound for each entry in the output matrix depends only
on the dot product between the corresponding row and column of the input
matrices.  Fast algorithms perform computations involving other input matrix
entries that do not appear in a given dot product (their contributions
eventually cancel out), and therefore the error bounds for these algorithms
depend on more global properties of the input matrices.  Thus, fast algorithms
with no modification are known to exhibit so-called norm-wise
stability~\cite{bini1980stability} (sometimes referred to as Brent
stability~\cite{miller1975computational}) while the classical algorithm exhibits the stronger
component-wise stability, which is unattainable for fast algorithms
\cite{miller1975computational}.  

Our main goals in this paper are to explore means for improving the theoretical
error bounds of fast matrix multiplication algorithms as well as to test the
improvements with numerical experiments, focusing particularly on those
algorithms that yield performance benefits in practice.  For computing
$\M{C}=\M{A}\cdot\M{B}$, where $\M A$ is $M\times K$ and $\M B$ is $K\times N$,
norm-wise stability bounds for full recursion take the following form:
\begin{equation*}
\|\hat{\M{C}} - \M{C}\| \leq f_\text{alg}(K)  \|\M{A}\| \|\M{B}\|  \epsilon + O(\epsilon^2),
\end{equation*}
where $\|\cdot\|$ is the max-norm, $\epsilon$ is the machine precision, and
$f_\text{alg}$ is a polynomial function that depends on the algorithm
\cite{bini1980stability,demmel2006fast,higham2002accuracy}.\footnote{Here and elsewhere, the $O(\epsilon^2)$ term hides dependence on dimensions and norms of input matrices.}
For example,
$f_\text{alg}(K)=K^2$ for the classical algorithm, with no assumption on the
ordering of dot product computations.  We note that $f_\text{alg}$ is
independent of the input matrices, and $\|\M{A}\|\|\M{B}\|$ is independent of
the algorithm.  In this paper, we explore ways of improving each factor
separately.  Our main contributions include:
\begin{enumerate}
	\item generalizing and tightening previous error analysis of stationary fast algorithms to bound $f_\text{alg}$ in terms of the number of recursive steps used and two principal quantities derived from $\alg{\M U}{\M V}{\M W}$;
	\item presenting and comparing the stability quantities of recently discovered practical algorithms;
	\item exploring means of improving algorithmic stability through algorithm selection and non-uniform, non-stationary combination of algorithms;
	\item presenting diagonal scaling techniques to improve accuracy for inputs with entries of widely varying magnitudes; and
	\item showing empirical results of the effects of the various improvement techniques on both error and performance.
\end{enumerate}

The structure of the remainder of the paper is as follows.  We describe related
work in \cref{sec:rel_work} and introduce our notation for fast matrix
multiplication algorithms in \cref{sec:alg}.  \Cref{sec:bounds} presents the
error analysis for bounding $f_\text{alg}$ for general fast algorithms, and
\cref{sec:selection} discusses the implications of the bounds on known practical
algorithms.  We present diagonal scaling techniques in \cref{sec:scaling},
showing how to reduce the contribution of the input matrices to the error bound.
Finally, we discuss our results in \cref{sec:discussion} and provide
directions for improving implementations of fast matrix multiplication algorithms.

\section{Related Work}
\label{sec:rel_work}
Bini and Lotti \cite{bini1980stability} provide the first general error bound
for fast matrix multiplication algorithms, and their analysis provides the basis
for our results.  Demmel \etal~\cite{demmel2006fast} generalize Bini and Lotti's
results and show that all fast algorithms are stable.  A more complete summary
of the numerical stability of fast algorithms, with a detailed discussion of
Strassen's algorithm along with Winograd's variant, appears in Higham's
textbook~\cite[Chapter 23]{higham2002accuracy}.  We discuss these previous works
in more detail and compare them to our error bounds in \cref{sec:bounds}.

Castrapel and Gustafson~\cite{castrapel2007precision} and
D'Alberto~\cite{d2014better} discuss means of improving the numerical stability
of Strassen's algorithm (and Winograd's variant) using the flexibility of
non-uniform, non-stationary algorithms.  Castrapel and Gustafson propose general
approaches to such algorithms, and D'Alberto provides a specific improvement in
the case of two or more levels of recursion.

Smirnov~\cite{smirnov2013bilinear} describes strategies for discovering
practical fast algorithms and presents several new algorithms, including a
rank-23 algorithm for $\bc{3}{3}{3}$ with the fewest known nonzeros and an
algorithm for $\bc{6}{3}{3}$ that yields a better exponent than Strassen's.
Similar techniques are used by Benson and Ballard~\cite{benson2014framework},
and they demonstrate performance improvements over the classical and Strassen's
algorithm for both single-threaded and shared-memory multi-threaded
implementations.  Laderman \etal~\cite{LPS92} and later Kaporin
\cite{Kaporin99,kaporin2004aggregation} consider another form of practical
algorithms, ones that can achieve fewer floating point operations than the
Strassen-Winograd variant for certain matrix dimensions.  Kaporin demonstrates
better numerical stability than Strassen-Winograd and shows comparable
performance.  However, because the base case dimensions proposed are relatively
large (\eg, 13 or 20), we suspect that the performance will not be competitive
on today's hardware.  Further, because the $\alg{\M U}{\M V}{\M W}$
representations are not readily available, we do not consider these types of
algorithms in this work.

Dumitrescu \cite{dumitrescu1998improving} proposes a form of diagonal scaling to
improve the error bounds for Strassen's algorithm.  We refer to his approach as
\emph{outside scaling} and discuss it in more detail in \cref{sec:scaling}.
Higham \cite{higham2002accuracy} points out that inside scaling can also affect
the error bound but does not propose a technique for improving it.  Demmel
\etal~\cite{demmel2007fast} and Ballard \etal~\cite{ballard2012communication}
state (without proof) improved error bounds using either inside or outside
diagonal scaling.

\section{Fast Matrix Multiplication Algorithms}
\label{sec:alg}

Fast algorithms for matrix multiplication are those that perform fewer arithmetic operations than the classical algorithm in an asymptotic sense, achieving a computational complexity exponent less than 3 for the square case.
We consider such fast algorithms to be practical if they have been (or likely can be) demonstrated to outperform the most efficient implementations of the classical algorithm on current hardware~\cite{benson2014framework}.
From a practical viewpoint, because matrices arising in current applications have limited size, we can consider a fast algorithm's recursive rule being applied only a few times.
In light of this viewpoint, we state our algorithms (and error bounds) in terms of the number of recursive levels rather than the dimension of the base case, where the number of recursive levels need not be considered a fixed quantity.
In the rest of this section, we state the notational and terminology of the fast algorithms we consider in this paper.

\subsection{Base Case Algorithms}

A bilinear non-commutative algorithm that computes a product of an $\partdimm
\times \partdimk$ matrix and a $\partdimk \times \partdimn$ matrix ($\M{C} =
\M{A}\M{B}$) using $R$ non-scalar (active) multiplications is determined by a
$\partdimm\partdimk \times R$ matrix $\M{U}$, a $\partdimk\partdimn \times R$
matrix $\M{V}$, and a $\partdimm\partdimn \times R$ matrix $\M{W}$ such that
\begin{equation}\label{eqn:bilinear_alg}
\ME{C}{k} = \sum_{r=1}^{R}\ME{W}{kr}\ME{M}{r},
\quad \text{where} 
\quad \ME{M}{r} := \ME{S}{r} \cdot \ME{T}{r},
\quad \ME{S}{r} := \sum_{i=1}^{\partdimm\partdimk}\ME{U}{ir}\ME{A}{i},
\quad \ME{T}{r} := \sum_{j=1}^{\partdimk\partdimn}v_{jr}\ME{B}{j},
\end{equation}
for $k = 1, \ldots, \partdimm\partdimn$.  Here, the single indices of entries of
$\M A$ and $\M B$ assume column-major order, the single indices of entries of
$\M C$ assume row-major order, and $(\cdot)$ signifies an active multiplication.
We will refer to the dimensions of such an algorithm with the notation
$\bc{\partdimm}{\partdimk}{\partdimn}$, the rank of the algorithm by $R$, and
the set of coefficients that determine the algorithm with the notation $\alg{\M
  U}{\M V}{\M W}$.

\subsection{Stationary Algorithms}

Now we consider multiplying an $M \times K$ matrix $\M{A}$ by a $K \times N$ matrix $\M{B}$.
We will assume that $M$, $K$, and $N$ are powers of $\partdimm$, $\partdimk$, and $\partdimn$;
otherwise, we can always pad the matrices with zeros and the same analysis will hold.
The fast algorithm proceeds recursively by first partitioning $\M{A}$ into $\partdimm \times \partdimk$ submatrices of size $(M / \partdimm) \times (K / \partdimk)$ and $\M{B}$ into $\partdimk \times \partdimn$ submatrices of size $(K / \partdimk) \times (N / \partdimn)$ and then following \cref{eqn:bilinear_alg} by matrix blocks, \ie,
\begin{equation}
\label{eqn:recursive_alg}
\M{C}_{k} = \sum_{r=1}^{R}\ME{W}{kr}\M{M}_{r},
\; \text{where}
\; \M{M}_{r} := \M{S}_{r} \cdot \M{T}_{r},
\; \M{S}_{r} := \sum_{i=1}^{\partdimm\partdimk}\ME{U}{ir}\M{A}_{i},
\; \M{T}_{r} := \sum_{j=1}^{\partdimk\partdimn}v_{jr}\M{B}_{j}
\end{equation}
for $k = 1, \ldots, \partdimm\partdimn$, where $(\cdot)$ signifies a recursive
call to the algorithm.  Here, we are using single subscripts on matrices as an
index for the column- or row-major ordering of the matrix blocks.  The
algorithms in this class of fast matrix multiplication are called
\emph{stationary algorithms} because they use the same algorithm at each
recursive step \cite{demmel2006fast}.  However, we do not assume that stationary
algorithms recurse all the way to a base case of dimension 1; we assume only
that the base case computation (of whatever dimension) is performed using the
classical algorithm.  Thus, a stationary algorithm is defined by the triplet of
matrices $\alg{\M U}{\M V}{\M W}$ along with a number of recursive levels $L$
used before switching to the classical algorithm.

\subsection{Uniform, Non-Stationary Algorithms}

In contrast to the stationary algorithms discussed above,
\emph{uniform, non-stationary algorithms} employ a different fast algorithm, in the sense of \cref{eqn:bilinear_alg,eqn:recursive_alg}, at each recursive level \cite{BDHS11-SLB}.
The fast algorithm is the same at a given recursive level.
Specifically, we will consider uniform, non-stationary algorithms with $L$ steps of recursion, so the algorithm is specified by matrices $\M{U}^{[l]}$, $\M{V}^{[l]}$, $\M{W}^{[l]}$ of dimensions $\partdimm^{[l]}\partdimk^{[l]} \times R^{[l]}$, $\partdimk^{[l]}\partdimn^{[l]} \times R^{[l]}$, $\partdimm^{[l]}\partdimn^{[l]} \times R^{[l]}$, for $l = 1, \ldots, L$.

Uniform, non-stationary algorithms are of particular interest for maximizing
performance.  The fastest algorithm for a particular triplet of dimensions $M$,
$K$, and $N$ may depend on many factors; the same algorithm may not be optimal
for the recursive subproblems of smaller dimensions.  Assuming performance is
fixed for a given triplet of dimensions, the flexibility of non-stationary
algorithms allows for performance optimization over a given set of fast
algorithms.  However, in parallel and more heterogeneous settings, better
performance may be obtained by the greater generality of non-uniform,
non-stationary algorithms, described in the next section.

\subsection{Non-Uniform, Non-Stationary Algorithms}
\label{sec:nuns}

The final class of matrix multiplication algorithms we consider are {\it non-uniform, non-stationary algorithms}.
In contrast to the previous case, non-uniform, non-stationary algorithms use different algorithms within a single recursive level as well across recursive levels~\cite{BDHS11-SLB}, though we restrict the dimension of the partition by the fast algorithms at a given recursive level to be the same.
To define such algorithms, we must specify $\alg{\M U}{\M V}{\M W}$ for every node in the recursion tree, a total of $1+R^{[1]}+R^{[1]}R^{[2]}+\cdots+\prod_{l=1}^{L-1} R^{[l]}$ recursive rules.
We use superscript notation $[l,r_1,r_2,\dots,r_{l-1}]$ to denote a recursive node at level $l$, in the top-level subtree $r_1$, second level subtree $r_2$, and so on.

We demonstrate in \Cref{sec:non_uniform_non_stationary} that the flexibility of these algorithms allows for an improvement in the numerical stability of multi-level recursive algorithms.
We suspect that they also provide a performance benefit over stationary algorithms, though this has never been demonstrated empirically.

\section{Error Analysis}
\label{sec:bounds}

The work by Bini and Lotti~\cite{bini1980stability} provides the basic framework
for the forward error analysis of fast matrix multiplication algorithms.  They
provide general bounds for any square, stationary bilinear algorithm with
power-of-two coefficients (so that there is no error in scalar multiplications),
assuming that full recursion is used (a base case of dimension 1).  Demmel
\etal~\cite{demmel2006fast} extend the work of Bini and Lotti by (1)
accounting for errors induced by the scalar multiplications in bilinear
algorithms, (2) analyzing uniform, non-stationary bilinear fast matrix
multiplication algorithms (algorithms that use different fast matrix
multiplication routines at different levels of recursion), and (3) analyzing
group-theoretic fast matrix multiplication algorithms.  The bounds provided by
Demmel \etal~also assume square algorithms and that full recursion is
used.  Higham~\cite{higham2002accuracy} provides bounds for Strassen's original
algorithm as well as Winograd's variant in terms of the base case dimension
$n_0$, where the recursion switches to the classical algorithm.  Higham's bounds
are also slightly tighter (in the case of Strassen's and Winograd's algorithms)
than the general bounds previously mentioned.  We note that any matrix
multiplication algorithm satisfying the component-wise error bound must perform
at least $N^3$ arithmetic operations; that is, we cannot get the same
component-wise error bounds even when using just one step of recursion of a fast
algorithm~\cite{miller1975computational}.

The goal of the error analysis provided in this section is to generalize the
previous work in two main directions and to tighten the analysis particularly in
the case that nonzeros of $\M U$, $\M V$, and $\M W$ are not all $\pm1$.  First,
we will consider rectangular fast algorithms; that is, instead of considering
recursive rules for multiplying two $K\times K$ matrices, we consider the more
general set of rules for multiplying an $M\times K$ matrix by a $K\times N$
matrix.  Second, we will state our general bounds in terms of the number of
levels of recursion used.  Motivated by the results of recently discovered
practical algorithms~\cite{benson2014framework,smirnov2013bilinear}, we would
like to understand the theoretical error guarantees of an algorithm in terms of
its $\alg{\M U}{\M V}{\M W}$ representation.  The recent performance results
show that rectangular algorithms have practical value (even for multiplying
square matrices) and that, for performance reasons, typically only a small
number of recursive steps are used in practice.  Several recently discovered
practical algorithms include fractional power-of-two coefficients (e.g., $1/2$,
$1/4$, $1/8$), and we expect that other currently undiscovered useful algorithms
will include fractional coefficients that are not powers of two.  Therefore, we
make no assumptions on the entries of $\M U$, $\M V$, and $\M W$, and we derive
principal quantities below that can be tighter than the analogous quantities in
the previous works by Bini and Lotti~\cite{bini1980stability} and Demmel
\etal~\cite{demmel2006fast}, particularly in the case of fractional
coefficients.  This sometimes leads to much sharper error bounds (see
\cref{ex:better_emax}).

Finally, we point out that our representation of non-uniform, non-stationary algorithms is more convenient than previous work.
Careful choices of non-uniform, non-stationary algorithms have been shown to improve the numerical stability over stationary approaches (see \cref{ex:nunsStr}) \cite{d2014better}.
Bini and Lotti's bounds~\cite{bini1980stability} can be applied to such algorithms in terms of the global $\alg{\M U}{\M V}{\M W}$ representation, but the size of that representation grows quickly with the number of recursive levels.
Our representation (see \cref{sec:nuns}) and error bound (see \cref{sec:non_uniform_non_stationary}), given in terms of the base case rule used at each node in the recursion tree, allows for more efficient search of combinations of rules and has led to automatic discovery of more stable algorithms (see \cref{ex:nuns323}).

After defining the principal quantities of interest and specifying our model of computation, the rest of this section provides forward error bounds for each of the types of fast algorithms defined in \cref{sec:alg}.
We warn the reader that there are notational similarities and (sometimes subtle) inconsistencies with previous work, as a result of our tightening of the analysis.

\subsection{Principal quantities}
\label{sec:pq}

Following the approach of Bini and Lotti~\cite{bini1980stability}, we identify two principal quantities associated with a fast algorithm that, along with the dimensions of the algorithm and the number of levels of recursion, determine its theoretical error bounds.
The two principal quantities can be easily computed from the $\alg{\M U}{\M V}{\M W}$ representation, and we define them in terms of the following vectors:
\begin{equation}
\label{eqn:abg}
\VE{\alpha}{r} :=  \sum_{i=1}^{\partdimm\partdimk} \II(\ME{U}{ir}\neq0) \qquad 
\VE{\beta}{r} :=  \sum_{j=1}^{\partdimk\partdimn} \II(\ME{V}{jr}\neq0) \qquad 
\VE{\gamma}{k} :=  \sum_{r=1}^{R} \II(\ME{W}{kr}\neq0)
\end{equation}
\begin{equation}
\label{eqn:ab}
\VE{a}{r} := \sum_{i=1}^{\partdimm\partdimk} \vert\ME{U}{ir}\vert \qquad 
\VE{b}{r} :=  \sum_{j=1}^{\partdimk\partdimn} \vert\ME{V}{jr}\vert
\end{equation}
for $r=1,\dots,R$ and $k=1,\dots,\partdimm\partdimn$, where $\II$ is the
Boolean-valued indicator function with value 1 for true and 0 for false.  That
is, $\V \alpha$ is the vector of numbers of nonzeros in the columns of $\M U$,
$\V \beta$ is the vector of numbers of nonzeros in the columns of $\M V$, $\V
\gamma$ is the vector of numbers of nonzeros in the rows of $\M W$, $\V a$ is
the vector of column 1-norms of $\M U$, and $\V b$ is the vector of column
1-norms of $\M V$.  When $\M U$ and $\M V$ have $\pm 1$ entries, $\V \alpha = \V
a$ and $\V \beta = \V b$.

\begin{definition}
\label{def:prefactor}
The \emph{prefactor vector} $\V q$ is defined entry-wise by
\begin{equation}\label{eqn:qmax}
\VE{q}{k} =  \gamma_k + \max_r (\alpha_r + \beta_r) \II(\ME{W}{kr}\neq0)
\end{equation}
for $k=1,\dots,\partdimm\partdimn$, and the \emph{prefactor} $\qmax$ is defined as
$$\qmax = \max_k \VE{q}{k}.$$
\end{definition}

\begin{definition}
\label{def:stability}
The \emph{stability vector} $\V e$ is defined entry-wise by
\begin{equation}\label{eqn:emax}
\VE{e}{k} = \sum_{r=1}^{R} \VE{a}{r} \cdot \VE{b}{r} \cdot \vert\ME{w}{kr} \vert
\end{equation}
for $k=1,\dots,\partdimm\partdimn$, and the \emph{stability factor} $\emax$ is defined as
$$\emax = \max_k \VE{e}{k}.$$
\end{definition}
The principal quantities for several fast algorithms are listed in \cref{tab:principal_quantities}.

Bini and Lotti~\cite{bini1980stability} provide a definition of $\V q$ for two
different summation algorithms: sequential summation and serialized
divide-and-conquer (see \cref{sec:model}).  We choose the looser of these two
bounds (sequential summation) for generality and simpler notation.  However, our
results are easily converted to the tighter case.  Demmel \etal use the
serialized divide-and-conquer algorithm in their analysis.  Bini and Lotti's
analysis does not account for scalar (non-active) multiplication by elements of
$\M U$, $\M V$, and $\M W$, so their $\emax$ parameter depends only on the
non-zero structure, rather than the magnitude of the elements in these matrices
(cf. \cref{eqn:ab} and \cref{def:stability}).  Demmel \etal do account for the
multiplication by elements of $\M U$, $\M V$, and $\M W$.  However, their
$\emax$ parameter is identical to that of Bini and Lotti, and their bound
includes an additional factor of $(\| U \| \| V \| \| W \|)^{L}$, where $L$ is
the number of recursive levels and $\| \cdot \|$ is the max-norm.

\subsection{Model of arithmetic and notation}\label{sec:model}

We follow the notation of Demmel \etal~\cite{demmel2006fast}.
Let $\Theta = \{ \theta \;\mid\; \vert\theta\vert < \epsilon \}$ be the set of all errors bounded by $\epsilon$ (machine precision) and let $\Delta = \{ 1 + \theta \;\mid\; \theta \in \Theta \}$.
We assume the standard model of rounded arithmetic where the computed value of $op(a, b)$ is $op(a, b)(1 + \theta)$ for some $\theta \in \Theta$.
We use the set operation notation: $A + B := \{ a + b \mid a \in A, \; b \in B \}$, $A - B := \{ a - b \mid a \in A, \; b \in B \}$, and $A \cdot B := \{ a \cdot b \mid a \in A, \; b \in B \}$.

We define $A^j = A \cdot A\cdot \ldots \cdot A$ and note that $\Delta^j \subset \Delta^{j+1}$ as $1 \in \Delta$.
Furthermore, we will not distinguish between singleton sets and an element when using this notation, \eg, $op(a, b)(1 + \theta) \in op(a, b)\Delta$.
Finally, we will use the standard hat or $fl(\cdot)$ notation to denote a computed value, \eg, $\hat{\M{C}}$ or $fl(op(a, b)) \in op(a, b)\Delta$.

Under this arithmetic, the following fact for summation will be useful in our analysis
\begin{equation}\label{eqn:seq_summation_error}
fl\left(\sum_{i=1}^{N}fl(c_i \cdot a_i)\right) \in \left(\sum_{i=1}^{N}c_i \cdot a_i\right)\Delta^{N},
\end{equation}
where the algorithm for summation is simply to accumulate the terms $a_i$ one at a time, in sequential order.
By using a serialized divide-and-conquer summation, we can also achieve
\begin{equation}\label{eqn:dnc_summation_error}
fl\left(\sum_{i=1}^{N}fl(c_i \cdot a_i)\right) \in \left(\sum_{i=1}^{N}c_i \cdot a_i\right)\Delta^{1 + \lceil \log_2 N \rceil}.
\end{equation}
For generality, we will assume the more pessimistic bound in \cref{eqn:seq_summation_error}.
Our results can easily be modified for the error bounds in \cref{eqn:dnc_summation_error}.

We will also use the following property: 
\begin{equation}
\label{eqn:add_accum_err}
fl\lt(\sum_{i=1}^{N}c_i \Delta^{a_j}\right) \in \lt(\sum_{i=1}^{N}c_i \rt)\Delta^{N+\max_j a_j}.
\end{equation}

\subsection{Forward error analysis of stationary algorithms}
\label{sec:stationary}

The following theorem states the forward error bound for a stationary algorithm in terms of the principal quantities $\qmax$ and $\emax$ defined in \cref{sec:pq}, which can be readily determined from its $\alg{\M U}{\M V}{\M W}$ representation.
The sources of error are floating point error accumulation and possible growth in magnitude of intermediate quantities.
The floating point error accumulation depends in part on $\qmax$ and grows at worst linearly in $L$.
The growth of intermediate quantities depends on $\emax$ and grows exponentially in $L$, which typically dominates the bound.
\Cref{tab:principal_quantities} shows the values of these quantities for a variety of algorithms.

\begin{table}\begin{threeparttable}
\centering
\caption{ %
Principal quantities for a variety of fast matrix multiplication algorithms.
The rank of the algorithm ($R$) drives the asymptotic complexity, and the total number of non-zeroes in the $\M{U}$, $\M{V}$, and $\M{W}$ (nnz) affects the constant in the complexity. 
Likewise, the $\emax$ parameter drives the asymptotic behavior of the stability bound and the $\qmax$ parameter affects the constant (see \cref{thm:stationary_analysis}).
The stability exponent (stab.~exp.) denotes the asymptotic stability of the algorithm assuming square matrix multiplication (see \cref{eq:stabexp}), which allows for comparison of algorithms with different base case sizes.
}
\begin{tabular}{c c c c c c c c c }
\toprule
$\bc{\partdimm}{\partdimk}{\partdimn}$ & ref. & $\partdimm\partdimk\partdimn$ & $R$ & nnz & $\qmax$ & $\emax$ & stab.~exp. \\
\midrule
$\bc{2}{2}{2}$ & (classical) & 8 & 8 & 24 & 4 & 2 & 1 \\
$\bc{2}{2}{2}$ & \cite{strassen1969gaussian} & 8 & 7 & 36 & 8 & 12 & 3.58 \\
$\bc{3}{2}{2}$ & \cite{strassen1969gaussian}\tnotex{footnote:strassen} & 12 & 11 & 48 & 8 & 12 & 3.03 \\
$\bc{2}{3}{2}$ & \cite{strassen1969gaussian}\tnotex{footnote:strassen} & 12 & 11 & 48 & 9 & 13 & 3.03 \\
$\bc{4}{2}{2}$ & \cite{strassen1969gaussian}\tnotex{footnote:strassen} & 16 & 14 & 72 & 8 & 12 & 2.94 \\
$\bc{2}{4}{2}$ & \cite{strassen1969gaussian}\tnotex{footnote:strassen} & 16 & 14 & 72 & 12 & 24 & 2.94 \\
$\bc{3}{2}{3}$ & \cref{app:fast323} & 18 &15 & 94 & 10 & 20 & 3.21 \\
$\bc{3}{3}{2}$ & \cref{app:fast323} & 18 &15 & 94 & 11 & 23 & 3.21 \\
$\bc{3}{3}{3}$ & \cite{smirnov2013bilinear} & 27 & 23 & 139 & 15 & 29 & 3.07 \\
$\bc{4}{2}{3}$ & \cite{benson2014framework} & 24 & 20 & 130 & 14 & 34 & 3.38 \\
$\bc{3}{4}{2}$ & \cite{benson2014framework} & 24 & 20 & 130 & 14 & 30 & 3.38 \\
$\bc{2}{3}{4}$ & \cite{benson2014framework} & 24 & 20 & 130 & 14 & 35 & 3.38 \\
$\bc{4}{4}{2}$ & \cref{app:fast442} & 32 & 26 & 257 & 22 & 89 & 3.90 \\
$\bc{4}{2}{4}$ & \cref{app:fast442} & 32 & 26 & 257 & 23 & 92 & 3.93 \\
$\bc{3}{4}{3}$ & \cite{benson2014framework} & 36 & 29 & 234 & 23 & 100 & 3.66 \\
$\bc{3}{3}{4}$ & \cite{benson2014framework} & 36 & 29 & 234 & 18 & 71 & 3.66 \\
$\bc{3}{3}{6}$ & \cite{smirnov2013bilinear} & 54 & 40 & 960 & 39 & 428 & 4.69 \\
$\bc{3}{6}{3}$ & \cite{smirnov2013bilinear} & 54 & 40 & 960 & 48 & 728.5 & 4.69 \\
\bottomrule
\end{tabular}
\label{tab:principal_quantities}
\begin{tablenotes}
\item[*] \label{footnote:strassen} These algorithms correspond to straightforward generalizations of Strassen's $\bc{2}{2}{2}$ algorithm, either using two copies of the algorithm or one copy of the algorithm combined with the classical algorithm.
\end{tablenotes}
\end{threeparttable}\end{table}

\begin{theorem}
\label{thm:stationary_analysis}
Suppose that $\M{C} = \M{A} \cdot \M{B}$, where $\M{A} \in \mathbb{R}^{M \times K}$ and $\M{B} \in \mathbb{R}^{K \times N}$ is computed by using $L$ recursive steps of the fast matrix multiplication in \cref{eqn:recursive_alg}, with the classical algorithm used to multiply the $(M / \partdimm^L) \times (K / \partdimk^L)$ matrices by the $(K / \partdimk^L) \times (N / \partdimn^L)$ matrices at the base cases of the recursion.
Then the computed matrix $\hat{\M{C}}$ satisfies
\begin{equation*}\label{eqn:stationary_bound}
\| \hat{\M{C}} - \M{C} \| \le \lt(K / \partdimk^L + \qmax \cdot L\rt)(K / \partdimk^L) \cdot \emax^L\| \M{A} \| \| \M{B} \| \epsilon + O(\epsilon^2),
\end{equation*}
where $\| \cdot \|$ is the max-norm.
\end{theorem}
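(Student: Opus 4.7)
The plan is to proceed by induction on $L$, the number of fast recursive levels. The base case $L = 0$ is simply the classical algorithm applied directly to the full matrices, for which the standard component-wise bound gives $|\hat c_{ij} - c_{ij}| \le K\epsilon \sum_l |a_{il}||b_{lj}| + O(\epsilon^2)$ and hence $\|\hat{\M C} - \M C\| \le K^2\|\M A\|\|\M B\|\epsilon + O(\epsilon^2)$, matching the statement at $L = 0$.

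For the inductive step I would fix $L \ge 1$ and isolate one outermost recursive level on top of a subproblem that is itself solved by $L-1$ levels of the same algorithm. Applying \cref{eqn:seq_summation_error} entrywise to the weighted sums defining $\hat{\M S}_r$ and $\hat{\M T}_r$ yields $\|\hat{\M S}_r - \M S_r\| \le \VE{\alpha}{r}\VE{a}{r}\|\M A\|\epsilon + O(\epsilon^2)$ and an analogous bound for $\hat{\M T}_r$. The inductive hypothesis, applied to each recursive product on matrices of inner dimension $K/\partdimk$ with $L-1$ remaining levels and combined with $\|\hat{\M S}_r\|\|\hat{\M T}_r\| \le \VE{a}{r}\VE{b}{r}\|\M A\|\|\M B\| + O(\epsilon)$, bounds $\|\hat{\M M}_r - \hat{\M S}_r \hat{\M T}_r\|$ by $(K/\partdimk^L + \qmax(L-1))(K/\partdimk^L)\emax^{L-1}\VE{a}{r}\VE{b}{r}\|\M A\|\|\M B\|\epsilon + O(\epsilon^2)$. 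Error propagation through the exact product, via the split $\hat{\M S}_r\hat{\M T}_r - \M S_r \M T_r = (\hat{\M S}_r - \M S_r)\M T_r + \hat{\M S}_r(\hat{\M T}_r - \M T_r)$ together with the max-norm inequality $\|\M X \M Y\| \le (K/\partdimk)\|\M X\|\|\M Y\|$, contributes a further $(K/\partdimk)(\VE{\alpha}{r}+\VE{\beta}{r})\VE{a}{r}\VE{b}{r}\|\M A\|\|\M B\|\epsilon$. Finally, the outer weighted sum forming $\hat{\M C}_k$ introduces a summation error of $\VE{\gamma}{k}(K/\partdimk)\VE{e}{k}\|\M A\|\|\M B\|\epsilon$ via \cref{eqn:seq_summation_error}, after bounding $\|\hat{\M M}_r\| \le (K/\partdimk)\VE{a}{r}\VE{b}{r}\|\M A\|\|\M B\| + O(\epsilon)$.

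Collecting the four contributions and using the identities $\sum_r|\ME{W}{kr}|\VE{a}{r}\VE{b}{r} = \VE{e}{k}$ and $\sum_r|\ME{W}{kr}|(\VE{\alpha}{r}+\VE{\beta}{r})\VE{a}{r}\VE{b}{r} \le (\VE{q}{k}-\VE{\gamma}{k})\VE{e}{k}$ (the latter directly from \cref{def:prefactor}) collapses the $\VE{\gamma}{k}$ and $(\VE{\alpha}{r}+\VE{\beta}{r})$ pieces into the single term $(K/\partdimk)\VE{q}{k}\VE{e}{k}$, leaving the per-entry bound
\begin{equation*}
\bigl[(K/\partdimk)\VE{q}{k} + (K/\partdimk^L + \qmax(L-1))(K/\partdimk^L)\emax^{L-1}\bigr]\VE{e}{k}\|\M A\|\|\M B\|\epsilon + O(\epsilon^2).
\end{equation*}

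The main obstacle is the closing algebraic comparison. Taking maxima with $\VE{q}{k}\le\qmax$ and $\VE{e}{k}\le\emax$ produces $(K/\partdimk)\qmax\emax + (K/\partdimk^L)^2\emax^L + \qmax(L-1)(K/\partdimk^L)\emax^L$, whereas the target expansion is $(K/\partdimk^L)^2\emax^L + \qmax L(K/\partdimk^L)\emax^L$, so closing the induction reduces to the single inequality $(K/\partdimk)\emax \le (K/\partdimk^L)\emax^L$, i.e.\ $\emax \ge \partdimk$. I would establish this as a short supporting lemma: substituting $\M A = \M B = \mathbf 1$ into the base-case bilinear formula gives $|C_k| = \partdimk$ for every output entry, while $|s_r t_r| \le \VE{a}{r}\VE{b}{r}$ forces $|C_k| \le \sum_r|\ME{W}{kr}|\VE{a}{r}\VE{b}{r} = \VE{e}{k}$, whence $\emax \ge \partdimk$. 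With this in hand the two expressions agree up to $O(\epsilon^2)$ and the induction closes.
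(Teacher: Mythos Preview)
Your inductive argument is correct and leads to the stated bound, but it is structurally quite different from the paper's proof. The paper does not induct on $L$; instead it unrolls all $L$ recursive levels at once, tracking errors in the $\Delta$ formalism. It shows $\hat{\M S}^{[L]}_r \in \M S^{[L]}_r\Delta^{\alpha_{r_1}+\cdots+\alpha_{r_L}}$ (and similarly for $\hat{\M T}^{[L]}_r$), passes through the leaf-level classical multiply, and then combines back up through the $\M W$ matrices. The resulting bound involves two quantities, an accumulated-error exponent $\delta_k$ and a growth factor $\xi_k$; the key step is that $\max_k \delta_k$ separates additively across levels to give $K/\partdimk^L + \qmax L$, while $\max_k \xi_k$ factorizes multiplicatively to give $\emax^L$.

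What each route buys: the paper's direct unrolling avoids any comparison between different values of $L$, so it never needs your auxiliary inequality $\emax \ge \partdimk$; it also makes the extension to uniform non-stationary algorithms (the paper's next theorem) essentially immediate, since the per-level factorization just replaces identical factors by distinct ones. Your inductive approach is more elementary and closer in spirit to Higham's treatment of Strassen's algorithm, but the price is the closing lemma $\emax \ge \partdimk$, which you correctly supply via the all-ones substitution. That lemma is tight (the classical algorithm has $\emax = \partdimk$), so no sharpness is lost.
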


\begin{proof}
We begin by analyzing how relative errors propagate as we form the $\M{S}$ and $\M{T}$ matrices.
Let a superscript index in brackets denote a matrix formed at the specified level of recursion.
Following \cref{eqn:seq_summation_error}, we have the following error at the first recursive level:
\begin{equation*}
\hat{\M{S}}^{[1]}_r \in \sum_{i = 1}^{\partdimm\partdimk} \ME{U}{ir} \M{A}_i\Delta^{\alpha_r}, \quad \hat{\M{T}}^{[1]}_r \in \sum_{j = 1}^{\partdimk\partdimn} \ME{V}{jr} \M{B}_j\Delta^{\beta_r}, 
\end{equation*}
where $\V \alpha$ and $\V \beta$ are defined in \cref{eqn:abg}.

This error propagates as we recurse.
At the $l$th level of recursion, the inputs to the fast algorithm are given as sums of matrices $\M{A}_{\phi}$ and $\M{B}_{\psi}$, each with a possible error of $\Delta^{a}$ and $\Delta^{b}$, respectively, for some index sets $\phi$ and $\psi$ and some integers $a$ and $b$.
Following \cref{eqn:recursive_alg,eqn:seq_summation_error},
the algorithm simply accumulates an additional factor of $\Delta^{\alpha_{r_l}}$ and $\Delta^{\beta_{r_l}}$ before the matrices are passed to the subsequent level of recursion.
Thus, at the $L$th level of recursion, we have
\begin{equation}
\label{eqn:ST_error}
\hat{\M{S}}^{[L]}_r \in \M{S}^{[L]}_r\Delta^{\alpha_{r_1}+\cdots+\alpha_{r_L}}, \quad \hat{\M{T}}^{[L]}_r \in \M{T}^{[L]}_r\Delta^{\beta_{r_1}+\cdots+\beta_{r_L}}, 
\end{equation}
with $r=r_1+(r_2-1)R+\cdots+(r_L-1)R^{L-1}$.
Note that in exact arithmetic, 
\begin{equation}
\label{eqn:ST_exact}
\M{S}^{[L]}_r = \sum_{i=1}^{M_0^LK_0^L} \ME{U}{i_1r_1}\cdots\ME{U}{i_Lr_L} \M{A}_i, \quad \M{T}^{[L]}_r = \sum_{j=1}^{K_0^LN_0^L} \ME{V}{j_1r_1}\cdots\ME{V}{j_Lr_L} \M{B}_j,
\end{equation}
where $i=i_1+(i_2-1)M_0K_0+\cdots+(i_L-1)(M_0K_0)^{L-1}$ and $j=i_1+(j_2-1)K_0N_0+\cdots+(j_L-1)(K_0N_0)^{L-1}$ represent recursive orderings of the subblocks of $\M A$ and $\M B$.

We now use the classical algorithm to multiply the computed $\M{S}^{[L]}$ and $\M{T}^{[L]}$ matrices at the leaves of the recursion.
Because the inner dimension of each leaf-level matrix multiplication is $K/\partdimk^L$, from \cref{eqn:seq_summation_error,eqn:ST_error} we accumulate another factor of $\Delta^{K/\partdimk^L}$ to obtain
\begin{equation*}
\hat{\M{M}}^{[L]}_r \in \M{S}^{[L]}_r\M{T}^{[L]}_r\Delta^{\chi_r+K/\partdimk^L},
\end{equation*}
where $\chi_r=\alpha_{r_1}+\beta_{r_1}+\cdots+\alpha_{r_L}+\beta_{r_L}$ for $1\leq r \leq R^L$.

Next, the computed matrices $\M{M}^{[L]}$ are added to form $\M{C}$ following\cref{eqn:recursive_alg}.
At the $l$th level of recursion, sums of matrices $\M{M}^{[L]}_{\phi}$, for appropriate index sets $\phi$ and including accumulated error $\Delta^a$ for some integers $a$, are added together to form the intermediate computed quantities $\M{M}^{[l]}$. 
In the final step at the top of the recursion tree, we have
\begin{equation*}
\hat{\M{C}}_k \in \sum_{r=1}^{R} \ME{W}{kr}\hat{\M{M}}^{[1]}_{r}\Delta^{\gamma_k},
\end{equation*}
where $\V \gamma$ is as defined in \cref{eqn:abg}.
Following \cref{eqn:add_accum_err}, if $\hat{\M{M}}^{[1]}_{r} \in \M{M}^{[1]}_{r}\Delta^{x_r}$ for some integers $x_r$, then
\begin{equation*}
\hat{\M{C}}_k \in \sum_{r=1}^{R} \ME{W}{kr}\M{M}^{[1]}_{r}\Delta^{\gamma_k+\max_r x_r \cdot \II(\ME{W}{kr}\neq0)}.
\end{equation*}
Likewise, a factor of $\Delta^{\gamma_{k_l}}$ is accumulated at every recursive step, and the contributed error from the $\M{M}^{[L]}$ matrices comes from the leaf (that is involved in the summation) with maximum error.
Leaf matrix $\M{M}^{[L]}_r$ is involved in the summation for $\M{C}_k$ if $\ME{W}{k_1r_1}\cdots \ME{W}{k_Lr_L}\neq0$, where $r=r_1+(r_2-1)R+\cdots(r_L-1)R^{L-1}$ and $k=k_1+(k_2-1)M_0N_0+\cdots+(k_L-1)(M_0N_0)^{L-1}$.
Thus, we have
\begin{equation*}
\hat{\M{C}}_k \in \sum_{r=1}^{R^L} \ME{W}{k_1r_1}\cdots \ME{W}{k_1r_L} \M{M}^{[L]}_{r}\Delta^{\mu_k+\max_r \chi_r \cdot \II(\ME{W}{k_1r_1}\cdots \ME{W}{k_Lr_L}\neq0)+K/\partdimk^L},
\end{equation*}
where $\mu_k=\gamma_{k_1}+\cdots+\gamma_{k_L}$.

Let $\delta_k = \mu_k+\max_r \chi_r \cdot \II(\ME{W}{k_1r_1}\cdots \ME{W}{k_Lr_L}\neq0)+K/\partdimk^L$.  
In order to determine the largest accumulated error, we compute the maximum over all output blocks $\M{C}_k$:
\begin{align*}
\max_k \delta_k &= K/\partdimk^L + \max_{k_1,\dots,k_L} \lt\{\mu_k+ \max_{r_1,\dots,r_L} \chi_r \cdot \II(\ME{W}{k_1r_1}\cdots \ME{W}{k_Lr_L}\neq0) \rt\} \\
&= \begin{multlined}[t][.875\textwidth] K/\partdimk^L + \max_{k_1}  \lt\{ \gamma_{k_1}+ \max_{r_1} (\alpha_{r_1}+\beta_{r_1}) \II(\ME{W}{k_1r_1}\neq0) \rt\} + \cdots \\
 + \max_{k_L}  \lt\{ \gamma_{k_L}+ \max_{r_L} (\alpha_{r_L}+\beta_{r_L}) \II(\ME{W}{k_Lr_L}\neq0) \rt\} \end{multlined} \\
 &= K/\partdimk^L + \max_{k_1}  \lt\{ \gamma_{k_1}+ \max_{r_1} (\alpha_{r_1}+\beta_{r_1}) \II(\ME{W}{k_1r_1}\neq0) \rt\} \cdot L 
 = K/\partdimk^L + \qmax \cdot L,
\end{align*}
where $\qmax$ is given in \cref{def:prefactor}.

We now compute the forward error bound for each block of the output matrix.
We have $\M{E}_k=\M{C}_k - \hat{\M{C}}_k \in \sum_r \ME{W}{k_1r_1}\cdots \ME{W}{k_1r_L} \M{M}^{[L]}_{r} \Theta^{\delta_k}$, which implies (using \cref{eqn:ST_exact})
\begin{align*}
\lt\vert\M{E}_k\rt\vert &\leq \sum_{r=1}^{R^L} \lt\vert \ME{W}{k_1r_1}\cdots \ME{W}{k_1r_L} \M{S}^{[L]}_{r} \M{T}^{[L]}_{r} \rt\vert\delta_k \epsilon + O(\epsilon^2) \\
 &\leq \begin{multlined}[t][.875\textwidth] \sum_{r=1}^{R^L} \vert \ME{W}{k_1r_1}\cdots \ME{W}{k_1r_L}\vert \sum_{i=1}^{M_0^LK_0^L} \vert\ME{U}{i_1r_1}\cdots\ME{U}{i_Lr_L}\vert \vert\M{A}_i\vert \sum_{j=1}^{K_0^LN_0^L} \vert\ME{V}{j_1r_1}\cdots\ME{V}{j_Lr_L}\vert \vert\M{B}_j\vert\delta_k \epsilon \\
+ O(\epsilon^2) \end{multlined} \\
 &\leq \begin{multlined}[t][.9\textwidth] \sum_{r=1}^{R^L} \vert \ME{W}{k_1r_1}\cdots \ME{W}{k_1r_L}\vert \sum_{i=1}^{M_0^LK_0^L} \vert\ME{U}{i_1r_1}\cdots\ME{U}{i_Lr_L}\vert \sum_{j=1}^{K_0^LN_0^L} \vert\ME{V}{j_1r_1}\cdots\ME{V}{j_Lr_L}\vert \; \cdot \\ 
(K/K_0^L) \|\M{A}\|\|\M{B}\|\delta_k \epsilon + O(\epsilon^2). \end{multlined} 
\end{align*}
Let $\xi_k=\sum_r \vert \ME{W}{k_1r_1}\cdots \ME{W}{k_1r_L}\vert \sum_i \vert\ME{U}{i_1r_1}\cdots\ME{U}{i_Lr_L}\vert \sum_j \vert\ME{V}{j_1r_1}\cdots\ME{V}{j_Lr_L}\vert$.
In order to determine the largest intermediate quantity, we compute the maximum over all output blocks $\M{C}_k$:
\begin{align*}
\max_k \xi_k &= \max_{k_1,\dots,k_L} \sum_{r_1,\dots,r_L} \vert \ME{W}{k_1r_1}\cdots \ME{W}{k_Lr_L}\vert \sum_{i_1,\dots,i_L} \vert\ME{U}{i_1r_1}\cdots\ME{U}{i_Lr_L}\vert \sum_{j_1,\dots,j_L} \vert\ME{V}{j_1r_1}\cdots\ME{V}{j_Lr_L}\vert \\
 &= 
 \begin{multlined}[t][.875\textwidth]
 \lt(\max_{k_1} \sum_{r_1} \vert \ME{W}{k_1r_1}\vert \sum_{i_1} \vert\ME{U}{i_1r_1}\vert \sum_{j_1} \vert\ME{V}{j_1r_1}\vert \rt) \cdots
 \\
 \lt(\max_{k_L} \sum_{r_L} \vert \ME{W}{k_Lr_L}\vert \sum_{i_L} \vert\ME{U}{i_Lr_L}\vert \sum_{j_L} \vert\ME{V}{j_Lr_L}\vert \rt) \end{multlined} 
 \\
 &= \lt(\max_{k_1} \sum_{r_1} \vert \ME{W}{k_1r_1}\vert \sum_{i_1} \vert\ME{U}{i_1r_1}\vert \sum_{j_1} \vert\ME{V}{j_1r_1}\vert \rt)^L = \emax^L,
\end{align*}
where $\emax$ is given in \cref{def:stability}.

Computing $\max_k \vert\M{E}_k\vert$ by maximizing over $\delta_k$ and $\xi_k$ separately, we obtain our result.
We note that the two quantities may not achieve their maxima for the same $k$, but we ignore the possible looseness as the overall bound will typically be dominated by the value of $\emax$. 
\end{proof}

Note that if $L = \log_{\partdimk} K$ (full recursion), the bound in \cref{thm:stationary_analysis} becomes
\begin{equation*}
\| \hat{\M{C}} - \M{C} \| 
\le \lt(1 + \qmax \cdot L\rt) \cdot \emax^{\log_{\partdimk} K}\| \M{A} \| \| \M{B} \| \epsilon + O(\epsilon^2) \nonumber
\end{equation*}
which is the bound provided by Demmel \etal~\cite{demmel2006fast}, assuming $\partdimm = \partdimk = \partdimn$, $M=K=N$, all nonzeros of $\M U$ have the same value, all nonzeros of $\M V$ have the same value, and all nonzeros of $\M W$ have the same value.
If $L = 0$ (no recursion), we get the familiar bound
\begin{equation*}
\| \hat{\M{C}} - \M{C} \| \le K^2\| \M{A} \| \| \M{B} \| \epsilon + O(\epsilon^2).
\end{equation*}

\begin{example}\label{ex:better_emax}
Because our definition of $\emax$ (\cref{def:stability}) accounts for the
magnitude of the entries $\M U$, $\M V$ , and $\M W$ in situ, the bound from
\cref{thm:stationary_analysis} can be tighter than previous
analyses~\cite{bini1980stability,demmel2006fast} when $\M U$, $\M V$, or $\M W$
has entries outside of $\{-1, 0, 1\}$.  As an example, we consider a
$\bc{4}{4}{2}$ algorithm, where the $\M U$ and $\M W$ matrices have entries in
$\{-0.5, 0.5\}$~\cite{benson2014framework} (see \cref{app:fast442}).  For this
algorithm, $\emax$ according to \cref{def:stability} is $89$, while $\emax$
according to previous work is $125$.
\end{example}

\subsection{Forward error analysis of uniform, non-stationary algorithms}
\label{sec:non_stationary}

Recall that uniform, non-stationary algorithms use a single algorithm at each recursive level.
We denote the prefactor vector, stability vector, and partition dimensions of algorithm $\alg{\M{U}^{[l]}}{\M{V}^{[l]}}{\M{W}^{[l]}}$ at level $l$ by  
$\V q^{[l]}$, $\V e^{[l]}$, and $\partdimm^{[l]}$, $\partdimk^{[l]}$, and $\partdimn^{[l]}$.
Using a similar analysis to \cref{sec:stationary}, we get the following stability bound for this class of algorithms:
\begin{theorem}\label{thm:non_stationary_analysis}
Suppose that $\M{C} = \M{A} \cdot \M{B}$ is computed by a uniform, non-stationary algorithm with $L$ recursive steps of fast matrix multiplication, with the fast algorithm $\alg{\M{U}^{[l]}}{\M{V}^{[l]}}{\M{W}^{[l]}}$ used at level $l$ and the classical algorithm used to multiply the matrices at the base case of the recursion.
Then the computed matrix $\hat{\M{C}}$ satisfies
\begin{equation*}
\| \hat{\M{C}} - \M{C} \| \leq
\lt(\frac{K}{\prod_{l=1}^{L}\partdimk^{[l]}} + \sum_{l=1}^{L}\qmax^{[l]}\rt)\lt(\frac{K}{\prod_{l=1}^{L}\partdimk^{[l]}}\rt) \cdot
\lt(\prod_{l=1}^{L}\emax^{[l]}\rt)\| \M{A} \| \| \M{B} \| \epsilon + O(\epsilon^2).
\end{equation*}
\end{theorem}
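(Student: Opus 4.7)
The plan is to mirror the proof of \cref{thm:stationary_analysis} almost verbatim, replacing quantities that were the same at every level with their level-dependent counterparts, and then carefully tracking how the max-over-indices decomposes across the recursion levels. The key observation is that because the algorithm is still \emph{uniform} within each level, the same separability into a product over levels (for multiplicative contributions) and a sum over levels (for additive contributions) that drove the stationary proof still applies; only the per-level factors change.

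First, I would analyze error propagation in forming the $\hat{\M S}$ and $\hat{\M T}$ matrices. At level $l$ the algorithm $\alg{\M U^{[l]}}{\M V^{[l]}}{\M W^{[l]}}$ is applied, so \cref{eqn:seq_summation_error} contributes factors of $\Delta^{\alpha^{[l]}_{r_l}}$ and $\Delta^{\beta^{[l]}_{r_l}}$ as these submatrices pass through level $l$. After $L$ levels the analog of \cref{eqn:ST_error} reads $\hat{\M S}^{[L]}_r \in \M S^{[L]}_r \Delta^{\alpha^{[1]}_{r_1}+\cdots+\alpha^{[L]}_{r_L}}$ with a symmetric expression for $\hat{\M T}^{[L]}_r$, where $r$ now indexes by the mixed-radix $(r_1,\dots,r_L)$ with $r_l\in\{1,\dots,R^{[l]}\}$. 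Multiplying the leaf blocks classically, whose inner dimension is $K/\prod_{l=1}^L \partdimk^{[l]}$, appends a factor of $\Delta^{K/\prod_l \partdimk^{[l]}}$, yielding $\hat{\M M}^{[L]}_r \in \M S^{[L]}_r \M T^{[L]}_r \Delta^{\chi_r + K/\prod_l \partdimk^{[l]}}$ with $\chi_r = \sum_{l=1}^{L}(\alpha^{[l]}_{r_l}+\beta^{[l]}_{r_l})$.

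Next I would compose the final sum for $\hat{\M C}_k$ by climbing back up the tree, using \cref{eqn:add_accum_err} at each level; this inserts $\Delta^{\gamma^{[l]}_{k_l}}$ at level $l$ and takes the max over the children with nonzero $\ME{W}{k_l r_l}^{[l]}$. The total error exponent $\delta_k$ is therefore $K/\prod_l \partdimk^{[l]} + \sum_{l=1}^{L} \gamma^{[l]}_{k_l} + \max_{r_1,\dots,r_L} \sum_{l=1}^{L}(\alpha^{[l]}_{r_l}+\beta^{[l]}_{r_l})\,\II(\ME{W}{k_l r_l}^{[l]}\neq 0)$. Because the summand at level $l$ depends only on $(k_l,r_l)$, the inner max decouples, so $\max_k \delta_k = K/\prod_l \partdimk^{[l]} + \sum_{l=1}^L \qmax^{[l]}$ by \cref{def:prefactor} applied level by level.

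Finally I would bound the intermediate growth factor $\xi_k$, defined exactly as in the stationary proof but with level-dependent $\M U^{[l]},\M V^{[l]},\M W^{[l]}$. Writing $|\ME{W}{k_1 r_1}^{[1]}\cdots \ME{W}{k_L r_L}^{[L]}| = \prod_l |\ME{W}{k_l r_l}^{[l]}|$, and similarly for $\M U$ and $\M V$, the triple sum over $(r,i,j)$ factors into a product of $L$ identical-in-form expressions, each maximized independently to give $\emax^{[l]}$ by \cref{def:stability}. Hence $\max_k \xi_k = \prod_{l=1}^L \emax^{[l]}$. Combining the $\delta_k$ and $\xi_k$ bounds with the $(K/\prod_l \partdimk^{[l]})\|\M A\|\|\M B\|$ factor inherited from bounding $|\M A_i|$ and $|\M B_j|$ entrywise gives the claimed estimate.

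The main obstacle is purely bookkeeping: keeping the mixed-radix indexing of $r$ and $k$ straight across levels, and confirming that both the max defining $\qmax$ and the sum defining $\emax$ genuinely decouple across levels in the non-stationary setting. No new analytical ideas beyond those used in \cref{thm:stationary_analysis} are required, since uniformity within each level preserves the separability that makes the maximization tractable.
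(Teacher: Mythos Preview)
Your proposal is correct and follows essentially the same approach as the paper: the paper's proof simply states that it is ``similar to the proof of \cref{thm:stationary_analysis}'' and then records precisely the two computations you outline, namely that $\max_k \delta_k = K/\prod_l \partdimk^{[l]} + \sum_{l=1}^{L}\qmax^{[l]}$ and $\max_k \xi_k = \prod_{l=1}^{L}\emax^{[l]}$, using the level-wise decoupling you describe. If anything, your write-up is more detailed than the paper's sketch.
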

\begin{proof}
The proof is similar to the proof of \cref{thm:stationary_analysis}.
The largest accumulation error $\delta$ now satisfies
\begin{align*}
\max_k \delta_k 
&= \begin{multlined}[t][.875\textwidth] \frac{K}{\prod_{l=1}^{L}\partdimk^{[l]}} + \max_{k_1}  \lt\{ \gamma^{[1]}_{k_1}+ \max_{r_1} (\alpha^{[1]}_{r_1}+\beta^{[1]}_{r_1}) \II(w^{[1]}_{k_1r_1}\neq0) \rt\} + \cdots \\
 + \max_{k_L}  \lt\{ \gamma^{[L]}_{k_L}+ \max_{r_L} (\alpha^{[L]}_{r_L}+\beta^{[L]}_{r_L}) \II(w^{[L]}_{k_Lr_L}\neq0) \rt\} 
= \frac{K}{\prod_{l=1}^{L}\partdimk^{[l]}} + \sum_{l=1}^{L}\qmax^{[l]},
 \end{multlined} 
\end{align*}
and the largest intermediate growth quantity $\xi$ satisfies
\begin{align*}
\max_k \xi_k 
 &= \begin{multlined}[t][.875\textwidth] \lt(\max_{k_1} \sum_{r_1=1}^{R^{[1]}} \vert \ME{W}{k_1r_1}^{[1]}\vert \sum_{i_1=1}^{\partdimm^{[1]}\partdimk^{[1]}} \vert\ME{U}{i_1r_1}^{[1]}\vert \sum_{j_1=1}^{\partdimk^{[1]}\partdimn^{[1]}} \vert\ME{V}{j_1r_1}^{[1]}\vert \rt) \cdots \\
 \lt(\max_{k_L} \sum_{r_L=1}^{R^{[L]}} \vert w^{[L]}_{k_Lr_L}\vert \sum_{i_L=1}^{\partdimm^{[L]}\partdimk^{[L]}} \vert\ME{U}{i_Lr_L}^{[L]}\vert \sum_{j_L=1}^{\partdimk^{[L]}\partdimn^{[L]}} \vert\ME{V}{j_Lr_L}^{[L]}\vert \rt) 
= \prod_{l=1}^{L}\emax^{[l]}.
 \end{multlined} \\
\end{align*}
\end{proof}

\subsection{Forward error analysis of non-uniform, non-stationary algorithms}
\label{sec:non_uniform_non_stationary}

We now consider non-stationary algorithms where the algorithm may be non-uniform at every given recursive level of fast matrix multiplication.
That is, at any node in the recursion tree, we may choose a different fast algorithm.
For simplicity, we assume that at level $l$ in the recursion tree, all algorithms have the same partitioning scheme and rank (so that the $\alg{\M{U}^{[l,r_1,\dots,r_{l-1}]}}{\M{V}^{[l,r_1,\dots,r_{l-1}]}}{\M{W}^{[l,r_1,\dots,r_{l-1}]}}$ representations have the same dimensions across all values $r_1,\dots,r_{l-1}$) and that after $L$ levels of recursion, all leaf nodes use the classical algorithm.

In the case of stationary algorithms, one $\alg{\M U}{\M V}{\M W}$ defines the entire algorithm; in the case of uniform non-stationary algorithms, $L$ choices of $\alg{\M{U}^{[l]}}{\M{V}^{[l]}}{\M{W}^{[l]}}$ define the entire algorithm; in this case, we have much more flexibility and can choose $1+R^{[1]}+R^{[1]}R^{[2]}+\cdots+\Pi_{l=1}^{L-1} R^{[l]}$ different fast algorithms (the number of internal nodes of the recursion tree).
Recall that we use the notation $[l,r_1,r_2,\dots,r_{l-1}]$ as a superscript to refer to the algorithm used at level $l$ in the recursion tree, where $r_1$ defines subtree membership at level 1, $r_2$ defines subtree membership at level 2, and so on, and $r_{l-1}$ defines the subtree node at the $l$th level.

Our analysis of these algorithms is fundamentally the same---we bound the accumulated error ($\delta$) and then bound the number of terms ($\xi$).
However, maximizing over all output blocks is not as straightforward and cannot be simplified as cleanly as in the previous cases.
In particular, we define the largest accumulation error $\delta$ recursively as $\max_k \delta_k^{[1]}$, where
\begin{align*}
\delta_k^{[1]} &= \frac{K}{\prod_{l=1}^{L}\partdimk^{[l]}} + \gamma_{k_1}^{[1]} + \max_{r_1} \delta^{[2,r_1]}_k \cdot \II(\ME{W}{k_1r_1}^{[1]} \neq 0), \\ 
\delta^{[2,r_1]}_k &= \gamma_{k_2}^{[2,r_1]} + \max_{r_2} \delta^{[3,r_1,r_2]}_k \cdot \II(\ME{W}{k_2r_2}^{[2,r_1]} \neq 0), \\ 
&\;\;\vdots\\
\delta^{[l,r_1,\dots,r_{l-1}]}_k &= \gamma_{k_l}^{[l,r_1,\dots,r_{l-1}]} + \max_{r_l} \delta^{[l+1,r_1,\dots,r_l]}_k \cdot \II(\ME{W}{k_lr_l}^{[l,r_1,\dots,r_{l-1}]} \neq 0), \\ 
&\;\;\vdots\\
\delta^{[L,r_1,\dots,r_{L-1}]}_k &= \gamma_{k_L}^{[L,r_1,\dots,r_{L-1}]} + \max_{r_L} \chi_r \cdot \II(\ME{W}{k_Lr_L}^{[L,r_1,\dots,r_{L-1}]} \neq 0), \text{ and}\\ 
\chi_r &= \alpha_{r_1}^{[1]} + \beta_{r_1}^{[1]} + \alpha_{r_2}^{[2,r_1]} + \beta_{r_2}^{[2,r_1]} + \cdots +  \alpha_{r_L}^{[L,r_1,\dots,r_{L-1}]} + \beta_{r_L}^{[L,r_1,\dots,r_{L-1}]}.
\end{align*}
This expression does not simplify as before. 
Note that for block $k$ of the output matrix, node $(r_1,\dots,r_{l-1})$ at level $l$ of the recursion tree accumulates error for the additions/subtractions required by matrix $\M{W}^{[l,r_1,\dots,r_{l-1}]}$ at that node plus the maximum accumulated error from any of the combined terms.  
The expression for $\chi_r$ reflects the number of additions and subtractions required to produce the factor matrices $\M{S}^{[L]}_r$ and $\M{T}^{[L]}_r$ at the leaf nodes, and the error accumulated during the classical matrix multiplications is included in the definition of $\delta_k^{[1]}$.

Likewise, the largest intermediate growth quantity $\xi$ is $\max_k \xi_k$, where
\begin{equation*}
\begin{multlined}[t][.95\textwidth]
\xi_k =\sum_{r_1,\dots,r_L} \lt\vert \ME{W}{k_1r_1}^{[1]}\ME{W}{k_2r_2}^{[2,r_1]}\cdots \ME{W}{k_Lr_L}^{[L,r_1,\dots,r_{L-1}]} \rt\vert \\
\cdot \sum_{i_1,\dots,i_L} \lt\vert\ME{U}{i_1r_1}^{[1]}\ME{U}{i_2r_2}^{[2,r_1]}\cdots\ME{U}{i_Lr_L}^{[L,r_1,\dots,r_{L-1}]} \rt\vert \cdot 
\sum_{j_1,\dots,j_L} \lt\vert\ME{V}{j_1r_1}^{[1]}\ME{V}{j_2r_2}^{[2,r_1]}\cdots\ME{V}{j_Lr_L}^{[L,r_1,\dots,r_{L-1}]} \rt\vert, 
\end{multlined} 
\end{equation*}
which we can simplify to
\begin{equation*}
\begin{multlined}[t][.95\textwidth]
\xi_k = \sum_{r_1} \lt\vert \ME{W}{k_1r_1}^{[1]} \rt\vert \VE{a}{r_1}^{[1]} \VE{b}{r_1}^{[1]} \cdot \sum_{r_2} \lt\vert \ME{W}{k_2r_2}^{[2,r_1]} \rt\vert \VE{a}{r_2}^{[2,r_1]} \VE{b}{r_2}^{[2,r_1]} \cdots  \\
\sum_{r_L} \lt\vert \ME{W}{k_Lr_L}^{[L,r_1,\dots,r_{L-1}]} \rt\vert \VE{a}{r_L}^{[L,r_1,\dots,r_{L-1}]} \VE{b}{r_L}^{[L,r_1,\dots,r_{L-1}]},
\end{multlined} 
\end{equation*}
where $\V{a}$ and $\V{b}$ vectors are defined as in \cref{eqn:ab}.
Note that we cannot simplify further as in the uniform case.

In Section~\ref{sec:searching_nuns}, we use non-uniform, non-stationary algorithms to
improve the numerical stability of fast matrix multiplication algorithms.

\section{Algorithm selection}
\label{sec:selection}

\cref{thm:stationary_analysis} immediately provides several options for
improving the numerical stability of fast matrix multiplication.  First, we can
look for algorithms with a smaller $\qmax$ and $\emax$.  Since prior work on
finding fast algorithms focuses on performance, this provides a new dimension
for algorithm design.  And in \cref{sec:search_stationary}, we compare several
stationary algorithms for the same base case as a first step in this dimension
of algorithm design.  We then extend this analysis to non-uniform,
non-stationary algorithms in \cref{sec:searching_nuns}.  Second, we can reduce
the number of recursive levels before using standard matrix multiplication
However, fewer recursive levels means an asymptotically slower algorithm.  We
examine this tradeoff in \cref{sec:recursive_levels}.  Finally, we can also
reduce $\| A \|$ and $\| B \|$ by pre- and post-processing the data, and we
provide several such strategies in \cref{sec:scaling}.

\subsection{Searching for better stationary algorithms}
\label{sec:search_stationary}

Typically, the only quantity of interest for finding fast matrix multiplication
algorithms is the rank of the solution, which controls the asymptotic
complexity.  However, we can also search for algorithms to minimize the $\qmax$
and $\emax$ quantities while maintaining the same rank.  This will improve the
numerical stability of the algorithm without sacrificing (asymptotic)
performance.  We will also consider the number of non-zeros (nnz) in the
solution, \ie, the sum of the number of non-zero entries in $\M{U}$, $\M{V}$,
and $\M{W}$, as this affects the constant in the asymptotic complexity and has
noticeable impact on empirical performance~\cite{benson2014framework}.  Thus,
the parameters of interest for these algorithms is a performance-stability
3-tuple (nnz, $\qmax$, $\emax$).  In general, the number of non-zeros is
positively correlated with $\qmax$ and $\emax$, since these quantities directly
depend on the non-zero patterns of $\M{U}$, $\M{V}$, and $\M{W}$ (see
\cref{eqn:qmax,eqn:emax}).

We first examined the base case $\bc{4}{2}{3}$, which has out-performed
Strassen's algorithm in practice~\cite{benson2014framework}.  We found 479
algorithms with rank $R = 20$ using numerical low-rank tensor decomposition
search techniques~\cite{benson2014framework}.  Of these, there were 208
performance-stability tuples.  The smallest nnz, $\qmax$, and $\emax$ quantities
over all algorithms were 130, 12, and 32, and the corresponding algorithms had
performance-stability tuples (130, 14, 34), (138, 12, 34), and (134, 13, 32). No
algorithm we found had parameters that achieved more than one of these minima,
so we call these three algorithms \emph{semi-optimal}.  Consequently, there is a
theoretical trade-off between performance and stability.  We note that although
this list of algorithms is not exhaustive, they are the only publicly available
$\bc{4}{2}{3}$ algorithms.%
\footnote{All of our algorithms, as well as the software for finding them, are
  publicly available at \url{https://github.com/arbenson/fast-matmul}.}

We tested the stability of these algorithms by computing the product of samples
of random matrices $\M{A} \in \mathbb{R}^{4096 \times 256}$ and $\M{B} \in
\mathbb{R}^{256 \times 2187}$.  The distributions were $\ME{A}{ij}$,
$\ME{B}{ij}~\sim~\text{Uniform}(0, 1)$ and $\ME{A}{ij}$,
$\ME{B}{ij}~\sim~\text{Uniform}(-1,1)$.  In addition to the three semi-optimal
algorithms described above, we also compared against an algorithm with a much
worse performance-stability tuple of (156, 26, 132), which we call a
\emph{sub-optimal} algorithm.  For each pair of matrices, we ran the four
algorithms with number of recursive levels $L = 1, 2, \ldots, 6$.  Our goal here
is to compare the errors of different algorithms with the same base case and
varying number of recursive levels---we are not claiming that any of these
algorithms are the best to use for these problem dimensions.

To estimate $\| \hat{\M{C}} -\M{C} \|$, we computed $\M{C}$ using the classical
algorithm in quadruple precision arithmetic.  All other computations used double
precision arithmetic.  Overall, we computed the errors for 100 random pairs
$\M{A}$ and $\M{B}$ for each distribution.  \cref{fig:errors423} reports the
maximum error over the 100 trials for each algorithm and each number of
recursive levels as well as the upper bound on the error from
\cref{thm:stationary_analysis}.  We see the following results:
\begin{enumerate}
\item %
The error bounds are still pessimistic, even with the improved analysis from
\cref{thm:stationary_analysis}.  Furthermore, the error bounds for the three
semi-optimal $\bc{4}{2}{3}$ algorithms are quite similar.
\item %
The true error increases with the number of recursive levels, as predicted by
\cref{thm:stationary_analysis} and modeled by the error bound.
\item %
For both distributions, the sub-optimal algorithm has larger errors than the
semi-optimal algorithms, as modeled by the error bound.
\item %
The difference between the semi-optimal algorithms depends on the matrices.  For
the $\text{Uniform}(0,1)$ distribution, there is a clear difference in error
between the algorithms.  Interestingly, the (134, 13, 32) semi-optimal algorithm
has larger errors than the (130, 14, 34), even though the former algorithm has
strictly better $Q$ and $E$ parameters.  For the $\text{Uniform}(-1, 1)$
distribution, the errors of the semi-optimal algorithms are practically
indistinguishable.
\end{enumerate}

We also considered the $\bc{2}{3}{2}$ base case, which has optimal rank $R =
11$~\cite{blaser2003complexity}.  One known algorithm that achieves the optimal
rank uses Strassen's algorithm on a $2 \times 2$ sub-block and classical matrix
multiplication on the remaining sub-blocks.  The base case of the algorithm is
small enough so that we could use a SAT solver~\cite{de2008z3} to find over
10,000 rank 11 $\bc{2}{3}{2}$ algorithms (ignoring symmetries).  We found that
the combination of Strassen's algorithm with the classical algorithm had a
strictly smaller performance-stability triple than all of the other rank 11
solutions.  We conclude that this algorithm is likely optimal in both a
performance and a stability sense for the class of $\bc{2}{3}{2}$ algorithms
where the scalar multiplications are $\pm 1$.

\begin{figure}
\includegraphics[width=6.4cm]{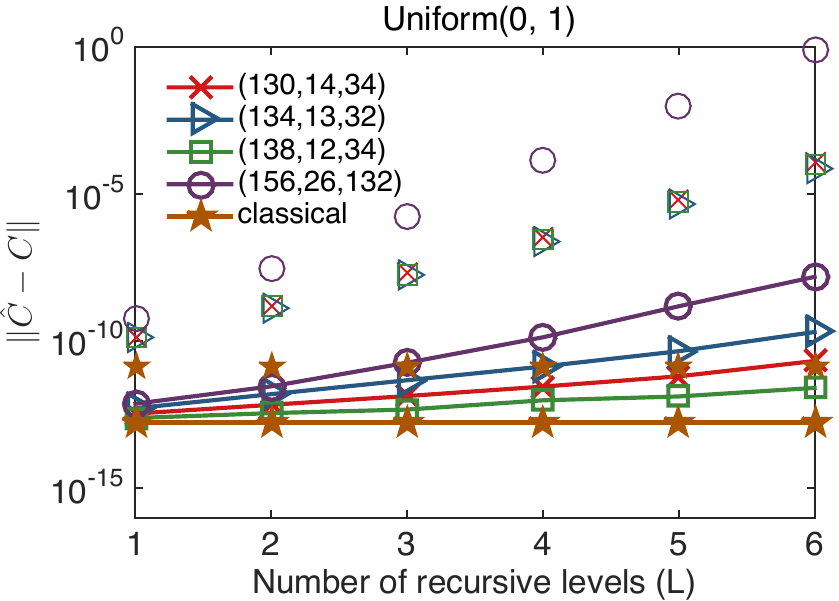}
\includegraphics[width=6.4cm]{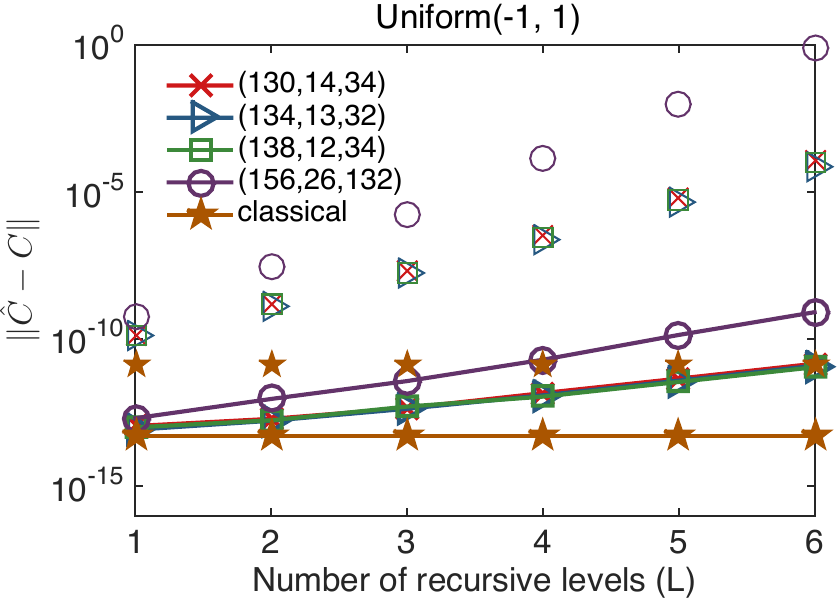}
\caption{%
Error for four $\bc{4}{2}{3}$ fast matrix multiplication algorithms with
different stability parameters and the classical algorithm as a function of the
number of recursive levels, $L$.  Three algorithms are semi-optimal in the sense
that they minimize one of the following quantities: number of non-zeros,
$\qmax$, or $\emax$.  The solid curves are the maximum experimental error over
100 pairs of random matrices, and the corresponding markers are the upper bounds
from \cref{thm:stationary_analysis}.  The experimental error increases with $L$,
as modeled by \cref{thm:stationary_analysis}.  The semi-optimal algorithms with
minimal nnz, $\qmax$, and $\emax$ all have similar performance, but the fast
algorithm with a worse performance-stability tuple is noticeably less stable in theory and practice.
}
\label{fig:errors423}
\end{figure}

\subsection{Searching for better non-uniform, non-stationary algorithms}
\label{sec:searching_nuns}

Stationary algorithms benefit from their simplicity, but non-uniform,
non-stationary algorithms provide a broader search space for algorithms with
better numerical properties.  We provide several examples below.

\begin{example}
\label{ex:nunsStr}
D'Alberto~\cite{d2014better} describes a non-uniform, non-stationary approach
using Strassen's algorithm that obtains a smaller stability factor than the
original stationary algorithm (for $L\geq2$).  Strassen's algorithm, with
$\alg{\M U}{\M V}{\M W}$ as given in \cref{app:strassen}, has stability vector
$e = \begin{bmatrix} 12 & 4 & 4 & 12 \end{bmatrix}$; two levels of recursion
with a stationary approach yields a two-level stability vector of $e\otimes e$
with maximum entry $12^2=144$.  D'Alberto shows that, for $L=2$, a stability
factor of 96 can be obtained with a non-uniform approach using one variant of
Strassen's algorithm.  One way to achieve this stability factor is to use the
alternative algorithm
\begin{equation*}
\alg{\tilde{\M U}}{\tilde{\M V}}{\tilde{\M W}} =
\alg{\M U}{
\lt(
\begin{bmatrix} 0 & 1 \\ 1 & 0 \end{bmatrix} \otimes
\begin{bmatrix} 1 & 0 \\ 0 & 1 \end{bmatrix}\rt) \M V
}
{\lt(
\begin{bmatrix} 1 & 0 \\ 0 & 1 \end{bmatrix} \otimes 
\begin{bmatrix} 0 & 1 \\ 1 & 0 \end{bmatrix}\rt)
\M W}
\end{equation*}
for nodes $[2,1]$, $[2,3]$, and $[2,4]$ of the recursion tree, while using the
original algorithm at nodes $[1]$, $[2,2]$, $[2,5]$, $[2,6]$, and $[2,7]$.
Similar improvements can be made based on the Strassen-Winograd algorithm, which
has a slightly larger stability factor.
\end{example}

A more generic non-uniform approach is described in a patent by Castrapel and
Gustafson~\cite{castrapel2007precision}.  They consider eight variants of the
Strassen-Winograd algorithm, defined by
\begin{equation*}
\alg{
\lt(
\begin{bmatrix} 0 & 1 \\ 1 & 0 \end{bmatrix}^x \otimes
\begin{bmatrix} 0 & 1 \\ 1 & 0  \end{bmatrix}^y\rt) 
\M U}
{\lt(
\begin{bmatrix} 0 & 1 \\ 1 & 0 \end{bmatrix} ^z\otimes
\begin{bmatrix} 0 & 1 \\ 1 & 0  \end{bmatrix}^x\rt)
\M V}{
\lt(
\begin{bmatrix} 0 & 1 \\ 1 & 0  \end{bmatrix}^y \otimes
\begin{bmatrix} 0 & 1 \\ 1 & 0 \end{bmatrix}^z\rt) \M W}
\end{equation*}
with $x,y,z\in\{1,2\}$.  The correctness of these variants can be derived from
the work of Johnson and McLoughlin~\cite[Equation (6)]{JM86}.  Castrapel and
Gustafson suggest using random, round-robin, or matrix-dependent selections of
algorithms to more evenly distribute the error, but they do not prove that any
particular techniques will reduce the stability factor.

\begin{example}
\label{ex:nuns323}
We can improve the two-level stability factor for the $\bc{3}{2}{3}$ case in a
similar manner.  The smallest stability factor we have discovered for this case
is $\emax=20$, given by the $\alg{\M U}{\M V}{\M W}$ in \cref{app:fast323},
which has stability vector
\[
e = \begin{bmatrix} 20 & 20 & 2 & 12 & 4 & 20 & 4 & 12 & 20 \end{bmatrix}.
\]
Compared to a uniform two-level stability factor of $20^2=400$, we can achieve a
stability factor of 352 using 3 variants of the algorithm.  We use the original
algorithm at nodes $[1]$, $[2,2]$, $[2,6]$, $[2,8]$, $[2,14]$, and $[2,15]$, the
variant
\begin{equation*}
\alg{\lt(I_2 \otimes
\begin{bmatrix} 1 & 0 & 0 \\ 0 & 0 & 1 \\ 0 & 1 & 0 \end{bmatrix}\rt)
\M U}{
\lt(
\begin{bmatrix} 1 & 0 & 0 \\ 0 & 0 & 1 \\ 0 & 1 & 0  \end{bmatrix} \otimes I_2\rt)
\M V}{
\lt( \begin{bmatrix} 1 & 0 & 0 \\ 0 & 0 & 1 \\ 0 & 1 & 0  \end{bmatrix} \otimes 
\begin{bmatrix} 1 & 0 & 0 \\ 0 & 0 & 1 \\ 0 & 1 & 0  \end{bmatrix}\rt)
\M W}
\end{equation*}
at nodes $[2,1]$, $[2,3]$, $[2,10]$, and $[2,11]$, and the variant
\begin{equation*}
\alg{\lt(I_2 \otimes \begin{bmatrix} 0 & 1 & 0 \\ 0 & 0 & 1 \\ 1 & 0 & 0  \end{bmatrix}\rt)
\M U}
{\lt(
\begin{bmatrix} 0 & 1 & 0 \\ 0 & 0 & 1 \\ 1 & 0 & 0  \end{bmatrix} \otimes I_2\rt)
\M V}
{\lt(
\begin{bmatrix} 0 & 1 & 0 \\ 0 & 0 & 1 \\ 1 & 0 & 0  \end{bmatrix} \otimes 
\begin{bmatrix} 0 & 1 & 0 \\ 0 & 0 & 1 \\ 1 & 0 & 0  \end{bmatrix}\rt)
\M W}
\end{equation*}
at nodes $[2,4]$, $[2,5]$, $[2,7]$, $[2,9]$, $[2,12]$, and $[2,13]$.  We suspect
that better two-level stability factors are achievable.
\end{example}

\subsection{Performance and stability trade-offs with a small number of recursive levels}
\label{sec:recursive_levels}

In addition to searching for better algorithms, we may also consider the effect
of the number of recursive levels on the numerical stability.  We now consider
the performance and stability of fast matrix multiplication algorithms across
several base cases and several values of $L$.  \cref{tab:principal_quantities}
summarizes the best known (to us) stability factors ($\emax$) for several
practical base case dimensions.  The columns of the table represent the relevant
performance and stability parameters for each algorithm, all of which can be
computed from the $\alg{\M U}{\M V}{\M W}$ representation.

The rank $R$ and the number of nonzeros (nnz), along with the number of
recursive levels used, determine the number of floating point operations
performed by the stationary version of the algorithm.  The rank can be compared
to the product $\partdimm\partdimk\partdimn$, the rank of the classical
algorithm for that base case.  The quantities $\qmax$ and $\emax$ are computed
using \cref{def:prefactor,def:stability}, respectively; for a given base case we
report the algorithm with the best known $\emax$ along with that algorithm's
$\qmax$.  We do not report both $\bc{\partdimm}{\partdimk}{\partdimn}$ and
$\bc{\partdimn}{\partdimk}{\partdimm}$ because the best algorithms for each have
identical nnz, $\emax$, and $\qmax$ parameters, due to transformations
corresponding to transposition of the matrix multiplication.

Although we stress that these algorithms will be used with only a few levels of
recursion, we also report the asymptotic stability exponent (stab.~exp.) in
order to compare algorithms across different base case dimensions.  If an
algorithm for a square base case $\bc{\partdimn}{\partdimn}{\partdimn}$ is used
on square matrices of dimension $N$ down to subproblems of constant dimension,
the bound of \cref{thm:stationary_analysis} can be simplified to
\begin{equation}
\label{eq:stabexp}
\| \hat{\M{C}} - \M{C} \| \le c\cdot N^{\log_{\partdimn}\emax}\log N\| \M{A} \| \| \M{B} \| \epsilon + O(\epsilon^2),
\end{equation}
where $c$ is a constant that depends in part on $\qmax$.  In this case, the
stability exponent is $\log_{\partdimn}\emax$.  We note that the first two rows
of \cref{tab:principal_quantities} match the results of Bini and
Lotti~\cite[Table 2]{bini1980stability}. The most stable rank-23 $\bc{3}{3}{3}$
algorithm of which we are aware is a cyclic rotation of the one given by
Smirnov~\cite{smirnov2013bilinear}.  In the case of rectangular base cases
$\bc{\partdimm}{\partdimk}{\partdimn}$, we assume a uniform, non-stationary
algorithm based on cyclic use of algorithms for
$\bc{\partdimm}{\partdimk}{\partdimn}$, $\bc{\partdimn}{\partdimm}{\partdimk}$,
and $\bc{\partdimk}{\partdimn}{\partdimm}$, where the three recursive rules are
transformations of each other, either by cyclic rotations or transposition (for
more details, see \cref{app:fast323,app:fast442}).

\cref{fig:stability_performance} shows the distribution of relative instability
and percentage of classical flops for the algorithms in
\cref{tab:principal_quantities}, for $L = 1, 2, 3, 4$.  We measure both terms
asymptotically.  Ignoring the quadratic cost of additions, the percentage of
classical flops is given by $(R / (M_0K_0N_0))^L$.  For large matrix dimension and $L$
small, we can ignore $\qmax$ by \cref{thm:stationary_analysis}, and we define the \emph{relative
instability} to be $(\emax / K_0^2)^{L}$, which is the factor by which the error bound exceeds that of the classical algorithm.  
In general, most algorithms follow a narrow 
log-linear trade-off between these two parameters.  However, there is still room
to select algorithms for a fixed number of recursion levels.  For example, with
$L=1$, the $\bc{3}{3}{3}$ algorithm has roughly the same stability and does
fewer floating point operations than Strassen's algorithm.

\begin{figure}
\centering
\includegraphics[width=10cm]{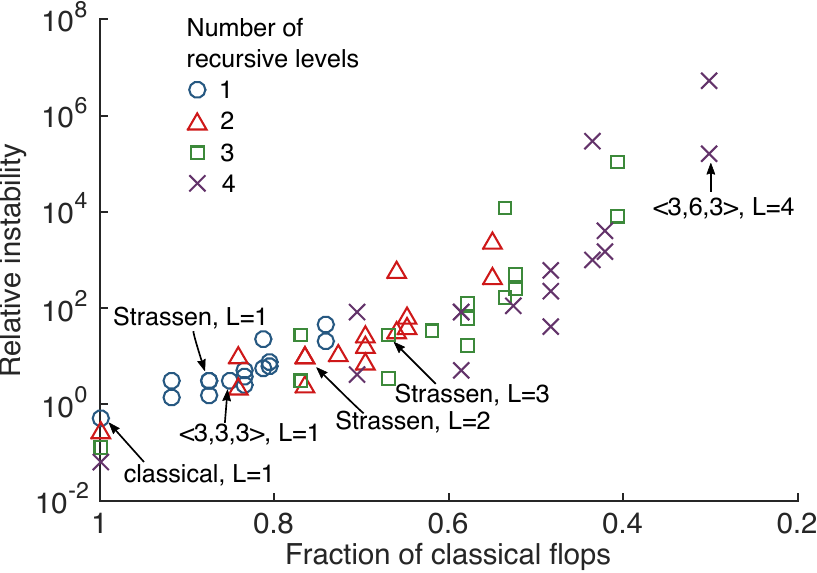}
\caption{%
Distribution of relative instability, $(E / K_0^2)^L$, and percentage of
classical flops, $(R / (M_0K_0N_0))^L$, for
the algorithms in \cref{tab:principal_quantities} with $L = 1, 2, 3, 4$.
A larger value on the y-axis means a less stable algorithm, and
a smaller value on the x-axis means a faster algorithm for large problem sizes.
There is a general log-linear trade-off between stability and number of
floating point operations.
}
\label{fig:stability_performance}
\end{figure}

\section{Scaling}
\label{sec:scaling}

We now turn our attention to strategies for pre- and post-processing matrices in
order to improve numerical stability.  The error bounds from \cref{sec:bounds}
can be summarized by the following element-wise absolute error bound 
\begin{equation}\label{eqn:abs_error}
\vert \ME{C}{ij} - \hat{c}_{ij} \vert \le f_\text{alg}(K) \| A \| \| B \| \epsilon + O(\epsilon^2).
\end{equation}
Recall that $f_\text{alg}$ is the (at worst) polynomial function of the inner dimension that depends on the particular algorithm used.
Unfortunately, these bounds can often be quite large when $|c_{ij}|$ is small relative to $\|A\|\|B\|$.
The purpose of this section is to address the contribution of $\| A \|$ and  $\| B \|$ to the error bound, ignoring the particular fast algorithm that is used. 
Thus, for the remainder of this section, we will ignore $f_\text{alg}$ and consider it a fixed quantity, so that $f_\text{alg}(K)\epsilon = O(\epsilon)$, and we will  focus on relative error. 

The following example shows that the relative error from fast matrix multiplication computations can be large.
We note that for the purposes of this example and subsequent examples throughout the paper, we assume that floating point operations incur an
$\epsilon$ relative error even if the operands happen to be powers of two or if we are subtracting identical values.  This assumption does not limit the generality of our examples; instead, we have chosen the matrix entries to make the examples
as simple as possible.
One could apply small independent relative perturbations on matrix entries
for the examples to work in standard floating point arithmetic.
\begin{example}\label{ex:out_scaling}
Consider the matrices
\begin{equation}\label{eqn:out_scaling_ex}
\M{A} =
\begin{bmatrix}
1 & 1 \\
1 & 1\\
\end{bmatrix},\;
\M{B} =
\begin{bmatrix}
\smallentry & 1 \\
\smallentry & 1 \\
\end{bmatrix},\;
\M{C} = \M{A} \cdot \M{B} =
\begin{bmatrix}
2\smallentry & 2 \\
2\smallentry & 2 \\
\end{bmatrix},
\end{equation}
for small $\smallentry > 0$.  By \cref{eqn:abs_error}, we have the following
relative error bound
\begin{equation}\label{eqn:scaling_motivation_ex}
\frac{\vert\ME{C}{11} - \hat{c}_{11}\vert}{\vert\ME{C}{11}\vert} \le O\lt(\frac{\| \M{A} \| \| \M{B} \|\epsilon}{\vert \ME{C}{11}\vert}\rt) = O\lt(\epsilon / \smallentry\rt),
\end{equation}
which can be quite large for small $\smallentry$.  Furthermore, this bound is
actually achieved with Strassen's algorithm (see \cref{app:strassen} for the
definition of Strassen's algorithm).  Specifically, Strassen's algorithm
computes

\begin{align*}\label{eqn:strassen_out_no_scaling}
m_1 &= (\ME{A}{11} + \ME{A}{22})(\ME{B}{11} + \ME{B}{22}) = (1 + 1) \cdot (\smallentry + 1) \\
m_4 &= \ME{A}{22}(\ME{B}{21} - \ME{B}{11}) = 1 \cdot (\smallentry - \smallentry)  \\
m_5 &= (\ME{A}{11} + \ME{A}{12})\ME{B}{22} = (1 + 1) \cdot 1 \\
m_7 &= (\ME{A}{12} - \ME{A}{22})(\ME{B}{21} + \ME{B}{22}) = (1 - 1) \cdot (\smallentry + 1)  \\
\ME{C}{11} &= m_1 + m_4 - m_5 + m_7.
\end{align*}

There are terms of size $O(1)$ in computing $m_1$, $m_4$, $m_5$, and $m_7$, so
the absolute error $\vert \ME{C}{11} - \hat{c}_{11} \vert$ is $O(\epsilon)$.
Since $\ME{C}{11} = \smallentry$, the relative error is $O(\epsilon / \smallentry)$.
\end{example}

We now demonstrate several methods for improving numerical stability issues by
pre-processing $\M{A}$ and $\M{B}$ and post-processing $\M{C}$.  The idea
underlying these methods is the following straightforward observation
\begin{equation}\label{eqn:scaling}
\M{C} = \leftscalemat\leftscalemat^{-1} \M{A}\inscalemat\inscalemat^{-1}\M{B}\rightscalemat^{-1}\rightscalemat,
\end{equation}
for any nonsingular scaling matrices $\leftscalemat$, $\rightscalemat$, and
$\inscalemat$.  By taking advantage of the associativity of matrix
multiplication (in exact arithmetic) and scaling matrices $\leftscalemat$,
$\rightscalemat$, and $\inscalemat$ that are easy to apply, we can improve the
norm-wise bound in \cref{eqn:stationary_bound} without significantly affecting
the performance of the algorithm.

For the algorithms and analysis in this section, we will consider diagonal
scaling matrices with positive diagonal entries.  In order to simplify the
analysis, we will assume that there is no numerical error in applying or
computing the scaling matrices.  This could be achieved, for example, by
rounding the scaling matrix entries to the nearest power of two.  Regardless,
the error introduced by the fast matrix multiplication algorithm has the larger
impact on the stability, and the scaling matrices can curb numerical
inaccuracies.

\subsection{Outside scaling}

In light of \cref{eqn:scaling}, Dumitrescu proposed the following outside scaling matrices~\cite{dumitrescu1998improving}:
\begin{equation*}
\leftscalemat = \diag{\max_{j} \vert\ME{A}{ij}\vert},\quad
\rightscalemat = \diag{\max_{i} \vert\ME{B}{ij}\vert}.
\end{equation*}

The resulting procedure is \cref{alg:out_scaling}.

\begin{algorithm}[H]
\begin{algorithmic}[1]
\Require matrices $\M{A}$ and $\M{B}$ 
\Ensure $\M{C} = \M{A} \cdot \M{B}$
\State $\leftscalemat \leftarrow \diag{\max_{j} \vert\ME{A}{ij}\vert}$ \;
	\State $\M{A}' \leftarrow \leftscalemat^{-1}\M{A}$ \;
	\State $\rightscalemat \leftarrow \diag{\max_{i} \vert\ME{B}{ij}\vert}$ \;
	\State $\M{B}' \leftarrow \M{B}\rightscalemat^{-1}$ \;
	\State $\M{C}' \leftarrow \M{A} \cdot \M{B}$ with fast matrix multiplication. \;
	\State $\M{C} \leftarrow \leftscalemat\M{C}'\rightscalemat$ \;
\end{algorithmic}
\caption{Outside scaling for fast matrix multiplication}
\label{alg:out_scaling}
\end{algorithm}

Clearly, the algorithm correctly computes $\M{C} = \M{A} \cdot \M{B}$ in exact
arithmetic, provided there are no all-zero rows in $\M{A}$ or all-zero columns
in $\M{B}$.  Importantly, the norm-wise bound in \cref{eqn:stationary_bound}
applies to the scaled matrices $\M{A}'$ and $\M{B}'$.  In particular, we get the
following improved bound~\cite{dumitrescu1998improving}.
\begin{proposition}\label{prop:out_scaling_bound}
Using \cref{alg:out_scaling},
\begin{equation*}
\vert \ME{C}{ij} - \hat{c}_{ij} \vert \le O(\epsilon) \| \ME{A}{i, :} \| \| \ME{B}{:, j} \|.
\end{equation*}
\end{proposition}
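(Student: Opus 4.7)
The plan is to apply the norm-wise error bound of \Cref{thm:stationary_analysis} to the \emph{scaled} product $\M{A}' \cdot \M{B}'$ computed in line 5 of \Cref{alg:out_scaling}, and then transfer that bound back to $\M{C}$ via the diagonal rescaling in line 6. Because the algorithm's only floating-point errors occur during the fast multiplication (the scalings being exact by assumption), the entrywise error in $\hat{\M C}$ is exactly the entrywise error in $\hat{\M C}'$ rescaled by $\leftscalemat$ on the left and $\rightscalemat$ on the right.

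First I would observe that, by construction, every row of $\M{A}' = \leftscalemat^{-1}\M{A}$ has $\infty$-norm at most $1$ and every column of $\M{B}' = \M{B}\rightscalemat^{-1}$ has $\infty$-norm at most $1$ (with zero rows/columns handled by replacing the corresponding diagonal entry with an arbitrary positive value, which does not affect the product). In particular, in the max-norm, $\|\M{A}'\|\le 1$ and $\|\M{B}'\|\le 1$.

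Next I would invoke \Cref{thm:stationary_analysis} (or the summary \cref{eqn:rel_error}) applied to $\hat{\M C}' = fl(\M{A}'\cdot \M{B}')$, yielding
\begin{equation*}
\vert \hat{c}'_{ij} - c'_{ij}\vert \;\le\; f_{\text{alg}}(K)\,\|\M{A}'\|\,\|\M{B}'\|\,\epsilon + O(\epsilon^2) \;=\; O(\epsilon).
\end{equation*}
Line 6 of the algorithm then yields $\hat{c}_{ij} = (\leftscalemat)_{ii}\,\hat{c}'_{ij}\,(\rightscalemat)_{jj}$ exactly, so
\begin{equation*}
\vert \hat{c}_{ij} - c_{ij}\vert \;=\; (\leftscalemat)_{ii}\,(\rightscalemat)_{jj}\,\vert \hat{c}'_{ij} - c'_{ij}\vert
 \;\le\; O(\epsilon)\,(\leftscalemat)_{ii}\,(\rightscalemat)_{jj}.
\end{equation*}
Substituting $(\leftscalemat)_{ii} = \max_j \vert a_{ij}\vert = \|\M{A}_{i,:}\|$ and $(\rightscalemat)_{jj} = \max_i \vert b_{ij}\vert = \|\M{B}_{:,j}\|$ gives the claimed bound.

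There is no genuine obstacle here; the only points requiring a moment of care are (i) the handling of zero rows of $\M A$ or zero columns of $\M B$, which is a harmless convention, and (ii) the assumption that applying $\leftscalemat^{\pm 1}$ and $\rightscalemat^{\pm 1}$ introduces no rounding error, which is justified in the paragraph preceding the algorithm by rounding the diagonal entries to nearest powers of two. All remaining work is bookkeeping of the max-norm through the scaling identities.
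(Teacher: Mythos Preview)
Your proposal is correct and follows essentially the same approach as the paper's proof: apply the norm-wise bound to the scaled product (where $\|\M{A}'\|,\|\M{B}'\|\le 1$) and then undo the diagonal scaling entrywise, identifying $(\leftscalemat)_{ii}=\|\M{A}_{i,:}\|$ and $(\rightscalemat)_{jj}=\|\M{B}_{:,j}\|$. Your treatment of the zero-row/column edge case and the exactness-of-scaling assumption is slightly more explicit than the paper's, but otherwise the arguments coincide.
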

\begin{proof}
Outside scaling ensures that $\| \M{A}' \| = \| \M{B}' \| = 1$, so by
\cref{eqn:abs_error}, $\| \M{C}' - \hat{\M{C}}' \| \le O(\epsilon)$.  Since
$\M{C}' - \hat{\M{C}}' = \leftscalemat(\hat{\M{C}} - \M{C})\rightscalemat$, the
result follows from the fact that the $i$th diagonal entry of $\leftscalemat$ is
$\| \ME{A}{i, :} \|$ and $j$th diagonal entry of $\rightscalemat$ is
$\|\ME{B}{:, j} \|$.
\end{proof}\smallspace

For the matrices in \cref{ex:out_scaling}, the bound from
\cref{prop:out_scaling_bound} improves upon \cref{eqn:scaling_motivation_ex}:

\begin{equation*}
\frac{\vert\ME{C}{11} - \hat{c}_{11}\vert}{\vert\ME{C}{11}\vert} \le O\lt(\frac{\| \ME{A}{1,:} \| \| \ME{B}{:,1} \|\epsilon}{\vert \ME{C}{11}\vert}\rt) = O\lt(\epsilon \rt)
\end{equation*}

This indeed improves the numerical stability of Strassen's algorithm.
For the matrices in \cref{eqn:out_scaling_ex}, the outside scaling is
\begin{equation}
\M{A}' \leftarrow
\begin{bmatrix}
1 & 1 \\
1 & 1 \\
\end{bmatrix},\;
\M{B}' \leftarrow
\begin{bmatrix}
1 & 1 \\
1 & 1 \\
\end{bmatrix},\;
\M{C}' = \M{A}' \cdot \M{B}' = 
\begin{bmatrix}
2 & 2 \\
2 & 2 \\
\end{bmatrix}. \nonumber
\end{equation}

And when computing $\M{C}'$ with Strassen's algorithm,
\begin{align*}\label{eqn:strassen_outside_scaling}
m_1 &= (\ME{A'}{11} + \ME{A'}{22})(\ME{B'}{11} + \ME{B'}{22}) = (1 + 1) \cdot (1 + 1) \\
m_4 &= \ME{A'}{22}(\ME{B'}{21} - \ME{B'}{11}) = 1 \cdot (1 - 1) \\
m_5 &= (\ME{A'}{11} + \ME{A'}{12})\ME{B'}{22} = (1 + 1) \cdot 1 \\
m_7 &= (\ME{A'}{12} - \ME{A'}{22})(\ME{B'}{21} + \ME{B'}{22}) = (1 - 1) \cdot (1 + 1) \\
\ME{C'}{11} &= m_1 + m_4 - m_5 + m_7.
\end{align*}

Now, all sub-terms are on the order of unity, so the relative error in computing $\ME{C}{11}'$ is $O(\epsilon)$.

\subsection{Inside scaling}

There are pairs of matrices where outside scaling is not sufficient for numerical stability.
\begin{example}\label{ex:in_scaling}
Consider the matrices
\begin{equation}\label{eqn:in_scaling_ex1}
\M{A} =
\begin{bmatrix}
1 & \smallentry \\
1 & \smallentry \\
\end{bmatrix},\;
\M{B} =
\begin{bmatrix}
\smallentry & \smallentry \\
1 & 1  \\
\end{bmatrix},\;
\M{C} = \M{A}\cdot\M{B} =
\begin{bmatrix}
2\smallentry & 2\smallentry \\
2\smallentry & 2\smallentry  \\
\end{bmatrix}.
\end{equation}
for small $\smallentry > 0$.
Using outside scaling on these matrices does nothing since $\leftscalemat = \rightscalemat = \M{I}$.
However, using a fast algorithm can still have severe numerical stability issues.
Computing $\ME{C}{12}$ with Strassen's algorithm uses the following computations:
\begin{align*}\label{eqn:strassen_in_scaling1}
m_3             &= \ME{A}{11}(\ME{B}{12} - \ME{B}{22}) = 1 \cdot (\smallentry - 1) \\
m_5             &= (\ME{A}{11} + \ME{A}{12})\ME{B}{22} = (1 + \smallentry) \cdot 1 \\
\ME{C}{12} &= m_3 + m_5. 
\end{align*}
The computation of $m_3$ and $m_5$ has terms of unit size,
so $\vert \ME{C}{12} - \hat{c}_{12} \vert$ is $O(\epsilon)$ and the relative error is $O(\epsilon / \smallentry)$.
This is reflected in the bound from \cref{eqn:abs_error}:
\begin{equation*}\label{eqn:outside_scaling_bound}
\| \M{A} \| \| \M{B} \| / \vert\ME{C}{12}\vert = 1/ (2\smallentry).
\end{equation*}

\end{example}

We now propose a technique called \emph{inside scaling} based on the following matrix:
\begin{equation}
\inscalemat = \diag{ \sqrt{\frac{\max_j \vert \ME{B}{kj} \vert}{\max_i \vert \ME{A}{ik} \vert}} } \label{eqn:inside_scaling_D}
\end{equation}
The resulting procedure is in \cref{alg:in_scaling}.  The idea is to scale the
columns of $\M{A}$ and the corresponding rows of $\M{B}$ to have the same norm.
In general, we get an improved error bound, as detailed in
\cref{prop:in_scaling_bound}.  We note that there exist several references to
inside scaling \cite{ballard2012communication, demmel2007fast,
  higham2002accuracy, lipshitz2012communication}, though to our knowledge this
is the first explicit statement of the diagonal values in \cref{eqn:inside_scaling_D}.
A crude inside scaling method was proposed earlier by Brent~\cite{brent1970error}, where the inside scaling matrix is $\M{D} = \sqrt{\| \M{B} \| / \| \M{A} \|} \M{I}$.

\begin{algorithm}
\begin{algorithmic}[1]
\Require{matrices $\M{A}$ and $\M{B}$}
\Ensure{$\M{C} = \M{A} \cdot \M{B}$}
	\State $\inscalemat \leftarrow \diag{ \sqrt{\frac{\max_j \vert \ME{B}{kj} \vert}{\max_i \vert \ME{A}{ik} \vert}} }$ \;
	\State $\M{A}' \leftarrow \M{A}\inscalemat$ \;
	\State $\M{B}' \leftarrow \inscalemat^{-1}\M{B}$ \;
	\State $\M{C} \leftarrow \M{A}' \cdot \M{B}'$ with fast matrix multiplication. \;
\end{algorithmic}
\caption{Inside scaling for fast matrix multiplication}
\label{alg:in_scaling}
\end{algorithm}

\begin{proposition}\label{prop:in_scaling_bound}
Using \cref{alg:in_scaling},
\begin{equation*}
\| \hat{\M{C}} - \M{C} \| \le O(\epsilon) \max_{i,k,j} \vert\ME{A}{ik}\vert \vert\ME{B}{kj}\vert.
\end{equation*}
\end{proposition}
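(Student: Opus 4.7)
The plan is to invoke the norm-wise bound of \cref{thm:stationary_analysis} on the product $\M{A}' \cdot \M{B}'$ that is actually computed by step~4 of \cref{alg:in_scaling}, and then to substitute in the specific choice of $\inscalemat$ so as to evaluate $\|\M{A}'\|$ and $\|\M{B}'\|$ in closed form.

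First I would observe that the scaling is invariant in exact arithmetic: $\M{A}'\M{B}' = \M{A}\inscalemat\inscalemat^{-1}\M{B} = \M{A}\M{B} = \M{C}$. Since the preamble to this section assumes that applying $\inscalemat$ introduces no floating-point error, the only source of error in $\hat{\M{C}}$ is the fast matrix multiplication of the two scaled factors. Expressing \cref{thm:stationary_analysis} in the compact form of \cref{eqn:rel_error} then gives
$$\|\hat{\M{C}} - \M{C}\| \le O(\epsilon)\,\|\M{A}'\|\,\|\M{B}'\|.$$

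Next I would evaluate those two max-norms explicitly. Writing $\alpha_k := \max_i |\ME{A}{ik}|$ and $\beta_k := \max_j |\ME{B}{kj}|$, the $k$th diagonal entry of $\inscalemat$ is $d_k = \sqrt{\beta_k/\alpha_k}$, so every entry in the $k$th column of $\M{A}'$ has magnitude at most $\alpha_k d_k = \sqrt{\alpha_k\beta_k}$ (with equality attained in some row), and every entry in the $k$th row of $\M{B}'$ has magnitude at most $\beta_k/d_k = \sqrt{\alpha_k\beta_k}$ (with equality attained in some column). Taking the max over $(i,k)$ (resp.\ over $(k,j)$) yields $\|\M{A}'\| = \|\M{B}'\| = \max_k \sqrt{\alpha_k\beta_k}$, so that
$$\|\M{A}'\|\,\|\M{B}'\| = \max_k \alpha_k\beta_k = \max_{i,k,j} |\ME{A}{ik}|\,|\ME{B}{kj}|,$$
which, combined with the displayed inequality above, is precisely the bound claimed in the proposition.

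The argument is short and I do not anticipate a deep obstacle; the only subtlety is that $d_k$ is ill-defined when $\alpha_k = 0$ or $\beta_k = 0$, in which case the $k$th column of $\M{A}$ or $k$th row of $\M{B}$ contributes nothing to the product and can simply be omitted, in analogy with the degeneracy handling for outside scaling. The conceptual content of the proof is that the diagonal $\inscalemat$ is chosen so that the $k$th column of $\M{A}'$ and the $k$th row of $\M{B}'$ share the common max-magnitude $\sqrt{\alpha_k\beta_k}$; this replaces the loose product $\|\M{A}\|\|\M{B}\| = (\max_k \alpha_k)(\max_k \beta_k)$ appearing in \cref{eqn:rel_error} by the tighter quantity $\max_k \alpha_k\beta_k$.
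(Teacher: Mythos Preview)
Your proposal is correct and follows essentially the same argument as the paper: apply the norm-wise bound \cref{eqn:rel_error} to the scaled product, then observe that the choice of $\inscalemat$ equalizes $\|\M{A}'_{:,k}\|$ and $\|\M{B}'_{k,:}\|$ to $\sqrt{\alpha_k\beta_k}$, so that $\|\M{A}'\|\|\M{B}'\|=\max_k\alpha_k\beta_k=\max_{i,k,j}|\ME{A}{ik}||\ME{B}{kj}|$. The paper phrases the key step as ``the two maxima are attained at the same index,'' which is exactly your observation, and it does not treat the $\alpha_k=0$ or $\beta_k=0$ degeneracy you flag.
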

\begin{proof}
By \cref{eqn:abs_error},
\begin{equation}
\| \hat{\M{C}} - \M{C} \| \le O(\epsilon)\| \M{A} \inscalemat \| \| \inscalemat^{-1} \M{B} \| 
= O(\epsilon)\left(\max_{k} \| \ME{A}{:, k} \| \ME{D}{kk} \right)\left(\max_{l}\ME{D}{ll}^{-1} \| \ME{B}{l, :} \| \right). \nonumber
\end{equation}
By the definition of $\inscalemat$,
\begin{equation}
\| \ME{A}{:, k} \| \ME{D}{kk} = \ME{D}{kk}^{-1} \| \ME{B}{k, :} \| = \sqrt{\| \ME{A}{:, k} \| \| \ME{B}{k, :} \|}, \nonumber
\end{equation}
so the two maxima are attained at the same index.
The result then follows from the fact that $\| \ME{A}{:, k} \| \| \ME{B}{k, :} \| = \max_{i, k, j} \vert \ME{A}{ik} \vert \vert \ME{B}{kj} \vert$.
\end{proof}\smallspace

For $\M{A}$ and $\M{B}$ in \cref{ex:in_scaling}, $\max_{i,k,j}
\vert\ME{A}{ik}\vert \vert\ME{B}{kj}\vert = \smallentry$, and we get an
$O(\epsilon)$ relative error bound for computing each entry in $\M{C}$.  The
inside scaling updates to the matrices in \cref{eqn:in_scaling_ex1} are
\begin{equation*}
\inscalemat =
\begin{bmatrix}
\sqrt{\smallentry} & 0 \\
0 & 1 / \sqrt{\smallentry}
\end{bmatrix},\;
\M{A}' \leftarrow
\begin{bmatrix}
\sqrt{\smallentry} & \sqrt{\smallentry} \\
\sqrt{\smallentry} & \sqrt{\smallentry}
\end{bmatrix},\;
\M{B}' \leftarrow
\begin{bmatrix}
\sqrt{\smallentry} & \sqrt{\smallentry}\\
\sqrt{\smallentry} & \sqrt{\smallentry}
\end{bmatrix}.
\end{equation*}

Strassen's algorithm now computes
\begin{align*}\label{eqn:strassen_in_scaling2}
m_3             &= \ME{A}{11}'(\ME{B}{12}' - \ME{B}{22}') = \sqrt{\smallentry} \cdot (\sqrt{\smallentry} - \sqrt{\smallentry}) \\
m_5             &= (\ME{A}{11}' + \ME{A}{12}')\ME{B}{22}' = (\sqrt{\smallentry} + \sqrt{\smallentry}) \cdot \sqrt{\smallentry} \\
\ME{C}{12}' &= m_3 + m_5.
\end{align*}
This time, the computation of $m_3$ and $m_5$ involves terms on the order of $\smallentry$ instead of on the order of unity, and we get $O(\epsilon)$ relative error in the computation.

\subsection{Repeated outside-inside scaling}
\label{sec:alternating-scaling}

Next, we consider repeatedly applying outside and inside scaling in alternating order, as shown in \cref{alg:alt_scaling}. 
This process can only improve the error bounds, and it eventually converges.
Outside and inside scaling can simply be applied several times, or the user can specify a cheaply computed stopping criterion that will guarantee a relative distance from the limit point.

\begin{algorithm}
\caption{Repeated outside-inside scaling for fast matrix multiplication}
\label{alg:alt_scaling}
\begin{algorithmic}[1]
    \algblockdefx{Alternate}{EndAlternate}{\textbf{alternate}}{\textbf{until converged}}
    \algblockdefx{Step}{EndStep}[1]{\textbf{#1 step}}{\textbf{end}}
    \State $\M{A}' \gets \M{A}$, $\M{B}' \gets \M{B}$, $\leftscalemat \gets \M{I}$, $\rightscalemat \gets \M{I}$
    \Alternate
        \Step{O}
            \State $\leftscalemat' \gets \diag{\max_{k}\abs{\ME{A'}{ik}}}$
            \State $\leftscalemat \gets \leftscalemat \leftscalemat'$
            \State $\M{A}' \gets (\leftscalemat')^{-1} \M{A}'$
            \State $\rightscalemat' \gets \diag{\max_{k}\abs{\ME{B'}{kj}}}$
            \State $\rightscalemat \gets \rightscalemat' \rightscalemat$
            \State $\M{B}' \gets \M{B}' (\rightscalemat')^{-1}$
        \EndStep
        \Step{I}
        \State $\inscalemat \gets \diag{\! \sqrt{\frac{\max_{j}\abs{\ME{B'}{kj}}}{\max_i\abs{\ME{A'}{ik}}}} \,}$
            \State $\M{A}' \gets \M{A}' \inscalemat$
            \State $\M{B}' \gets \inscalemat^{-1} \M{B}'$
        \EndStep
    \EndAlternate
    \State $\M{C}' \gets \M{A}' \cdot \M{B}'$ with fast matrix multiplication
    \State $\M{C} \gets \leftscalemat \M{C}' \rightscalemat$
\end{algorithmic}
\end{algorithm}

We start with our accuracy analysis. In our analysis, we use $\At$ and $\Bt$ to
denote the values of $\M{A}'$ and $\M{B}'$, respectively, after $t$ steps of
\cref{alg:alt_scaling}. We also use $\VnE{r}{t}{i}$ and $\VnE{s}{t}{j}$ to
denote the diagonal elements of $\leftscalemat$ and $\rightscalemat$,
respectively, after $t$ steps. The initial values of these variables correspond
to $t = 0$ in our notation.

\begin{proposition}
\label{prop:alt-scaling}
Let $t$ be the number of steps of \cref{alg:alt_scaling} that we complete. The computed product satisfies
\begin{equation*}
    \abs{\ME{C}{ij} - \hat{c}_{ij}}
        \le O(\epsilon) \;
            \VnE{r}{t}{i} \VnE{s}{t}{j} \;
            \norm{\At} \norm{\Bt}
    \,.
\end{equation*}
\end{proposition}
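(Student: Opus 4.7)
The plan is to identify an invariant preserved throughout the iteration, namely $\M{C} = \leftscalemat \At \Bt \rightscalemat$ in exact arithmetic, and then combine this invariant with the norm-wise error bound \cref{eqn:rel_error} applied to the final fast multiplication of $\At$ and $\Bt$. Since the scaling matrices involved are diagonal and assumed to be applied without rounding error, entry-wise conclusions follow immediately from the max-norm bound.

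First I would verify the invariant by induction on the step index $t$. At initialization ($t=0$), $\leftscalemat = \rightscalemat = \M{I}$, $\At = \M{A}$, $\Bt = \M{B}$, so $\leftscalemat \At \Bt \rightscalemat = \M{A}\M{B} = \M{C}$. In an \Ostep{}, the updates $\leftscalemat \gets \leftscalemat \leftscalemat'$ and $\At \gets (\leftscalemat')^{-1} \At$ preserve the product $\leftscalemat \At$; symmetrically, $\Bt \rightscalemat$ is preserved. In an \Istep{}, $\leftscalemat$ and $\rightscalemat$ are untouched, and the updates $\At \gets \At \inscalemat$, $\Bt \gets \inscalemat^{-1} \Bt$ preserve $\At \Bt$. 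Thus after any number of complete \Osteps{} and \Isteps{}, the four-fold product $\leftscalemat \At \Bt \rightscalemat$ equals $\M{C}$.

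Next I would apply \cref{eqn:rel_error} to the scaled fast multiplication. Writing $\hat{\M{C}}'$ for the computed value of $\At \cdot \Bt$, we have
\begin{equation*}
\|\hat{\M{C}}' - \At \Bt\| \le O(\epsilon)\, \|\At\|\, \|\Bt\|.
\end{equation*}
Under the section's convention that the diagonal scaling is exact, the returned output satisfies $\hat{\M{C}} = \leftscalemat \hat{\M{C}}' \rightscalemat$, so the invariant yields
\begin{equation*}
\hat{\M{C}} - \M{C} = \leftscalemat \bigl(\hat{\M{C}}' - \At\Bt\bigr) \rightscalemat.
\end{equation*}
Because $\leftscalemat$ and $\rightscalemat$ are diagonal with positive entries $\VnE{r}{t}{i}$ and $\VnE{s}{t}{j}$, this equation specializes entry-wise to
\begin{equation*}
\abs{\ME{C}{ij} - \hat{c}_{ij}} = \VnE{r}{t}{i}\, \VnE{s}{t}{j}\, \abs{\hat{c}'_{ij} - (\At\Bt)_{ij}} \le O(\epsilon)\, \VnE{r}{t}{i}\, \VnE{s}{t}{j}\, \|\At\|\, \|\Bt\|,
\end{equation*}
which is the claim.

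The main obstacle is mostly bookkeeping: one must check the invariant carefully through the interleaved \Osteps{} and \Isteps{}. The subtlety is that an \Istep{} preserves the product $\At \Bt$ but generally not the individual combinations $\leftscalemat \At$ or $\Bt \rightscalemat$ that the \Ostep{} maintains, so neither of the two \emph{binary} invariants used in the proofs of \cref{prop:out_scaling_bound} and \cref{prop:in_scaling_bound} survives on its own. Fortunately, only the combined four-fold product $\leftscalemat \At \Bt \rightscalemat$ enters the error expression, and this is preserved by both step types, so the argument goes through uniformly regardless of how the O and I steps are interleaved.
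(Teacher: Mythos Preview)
Your argument is correct and follows essentially the same skeleton as the paper: establish that $\M{C}-\hat{\M{C}}=\leftscalemat(\M{C}'-\hat{\M{C}}')\rightscalemat$ and bound $\abs{c'_{ij}-\hat c'_{ij}}$ by $O(\epsilon)\norm{\At}\norm{\Bt}$. The one difference is organizational. The paper argues by cases on the type of the last step, invoking \cref{prop:out_scaling_bound} when the last step is an \Ostep{} (so $\norm{\At}=\norm{\Bt}=1$) and \cref{prop:in_scaling_bound} when it is an \Istep{}. You instead prove the invariant $\M{C}=\leftscalemat\At\Bt\rightscalemat$ directly by induction over steps and then apply \cref{eqn:rel_error} once, uniformly, to the final fast product. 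Your route avoids the case split and the appeal to the earlier propositions, at the cost of redoing the small invariant calculation; the paper's route reuses prior results but is slightly less streamlined. Both are equally valid and yield the same bound.
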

\begin{proof}
If the last step is an \Ostep{}, then following the proof of
\cref{prop:out_scaling_bound}, $\| \M{A}^{(t)} \| = \| \M{B}^{(t)} \| = 1$ and
$\abs{c'_{ij} - \hat{c}'_{ij}} \le O(\epsilon)$.  If the last step is an
\Istep{}, then by \cref{prop:in_scaling_bound},
$$\abs{c'_{ij} - \hat{c}'_{ij}} \le O(\epsilon) \max_{i, k, j} \abs{a^{(t)}_{ik}}\abs{b^{(t)}_{kj}} \le O(\epsilon) \| \M{A}^{(t)} \| \| \M{B}^{(t)} \|.$$
The result then follows from the fact that $\M{C} - \hat{\M{C}} = \leftscalemat (\M{C}' - \hat{\M{C}}') \rightscalemat$.
\end{proof}

We now state the main result of this section.

\begin{theorem}\label{thm:convergence-alt-scaling}
The sequence
\begin{equation*}
    \VnE{r}{t}{i} \VnE{s}{t}{j} \norm{\At} \norm{\Bt} \qquad
    \text{for $t = 0, 1, \dotsc$}
\end{equation*}
is monotonically nonincreasing and converges linearly.
\end{theorem}
\begin{proof}
See \cref{app:convergence-alt-scaling}.
\end{proof}

This result implies that we can safely iteratively apply inside and outside scaling to improve the error bounds, but this process provides diminishing returns.

Next, we introduce our stopping criterion, which requires the following additional notation. 
Whenever step $t$ is an \Ostep{}, we use $\rpti{t}{i}$ and $\sptj{t}{j}$ to denote the diagonal elements of the matrices $\leftscalemat'$ and $\rightscalemat'$, respectively, that we compute in step $t$. Similarly, if step $t$ is an \Istep{}, $\pptk{t}{k}$ denotes the diagonal elements of~$\inscalemat$.

The stopping criterion works as follows. We test the intermediate
scaling factors $\pptk{t}{k}$, $\rpti{t}{i}$ and $\sptj{t}{j}$ in each iteration
starting with the one that immediately follows the
first \Ostep{}. In the \Isteps{}, we test whether all of the $\pptk{t}{k}$ fall
within the interval $\bigl[(1+\tau)^{-\frac{1}{4}},
  (1+\tau)^{\frac{1}{4}}\bigr]$, and in the \Osteps{}, we test whether all of
the $\rpti{t}{i}$ and $\sptj{t}{j}$ are greater than the threshold
$(1+\tau)^{-1/2}$. Whenever one of these conditions is true,
\cref{thm:stopping-criterion} below states that we are within a relative distance
$\tau$ from the limit, and so we stop iterating.  In practice, we may just
specify $t$ steps of scaling a priori, so as to have a better handle on
the overhead of the pre-processing.  We explore the performance overhead of the
pre-processing in \cref{sec:scaling_experiments}.

We now state the theorem that justifies the stopping criterion.
As we show in \cref{app:convergence-alt-scaling}, the sequences $\VnE{r}{t}{i}$, $\VnE{s}{t}{j}$, $\norm{\At}$ and $\norm{\Bt}$ converge. We use a superscript $\star$ to denote their limits, so that
\begin{equation*}
    \VnE{r}{t}{i} \to \VnE{r}{\star}{i} \,, \qquad
        \VnE{s}{t}{j} \to \VnE{s}{\star}{j} \,, \qquad
    \norm{\At} \to \norm{\At[\star]} \,, \qquad
        \norm{\Bt} \to \norm{\Bt[\star]} \,,
\end{equation*}
and we let
\begin{equation*}
    \VnE{\mu}{t}{ij} = 
    \frac{\abs[\big]{
        \VnE{r}{t}{i} \VnE{s}{t}{j} \norm{\At} \norm{\Bt}
        -
        \VnE{r}{\star}{i} \VnE{s}{\star}{j} \norm{\At[\star]} \norm{\Bt[\star]}
    }}{\abs[\big]{
        \VnE{r}{\star}{i} \VnE{s}{\star}{j} \norm{\At[\star]} \norm{\Bt[\star]}
    }}
\end{equation*}
be the relative distance of $\VnE{r}{t}{i} \VnE{s}{t}{j} \norm{\At} \norm{\Bt}$ from the limit.
We use $t_0$ to denote the index of the first \Ostep{} of the iteration.

\begin{theorem}\label{thm:stopping-criterion}
Let $\tau > 0$ be a user-specified tolerance parameter. We have that for $t = t_0, t_0+2, \dotsc$,
\begin{equation*}
        \max_{i,j} \VnE{\mu}{t+1}{ij} \leq \tau
        \quad \text{ if } \quad
        \min_k \pptk{t+1}{k} \geq (1 + \tau)^{-\frac{1}{4}}
        \quad \text{ and } \quad  
        \max_k\pptk{t+1}{k} \leq (1 + \tau)^\frac{1}{4},
\end{equation*}
and
\begin{equation*}
        \max_{i,j} \VnE{\mu}{t+2}{ij} \leq \tau 
        \quad \text{ if } \quad
        \min_i \rpti{t+2}{i} \geq (1 + \tau)^{-\frac{1}{2}}
        \quad \text{ and } \quad  
        \min_j \sptj{t+2}{j} \geq (1 + \tau)^{-\frac{1}{2}}.
\end{equation*}
\end{theorem}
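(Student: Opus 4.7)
The plan is to combine the bound $\VnE{\mu}{t+\ell}{ij} \leq \Sn{\nu}{t+\ell}$ from \cref{thm:convergence-alt-scaling} with the decay of the $\Sn{w}{\cdot}$ sequence in \cref{lem:convergence-wt}, so as to control the error by quantities observable in the most recent step. Both claims follow the same two-step recipe: collapse the infinite series defining $\Sn{\nu}{\cdot}$ to a single $\Sn{w}{\cdot}$ term using the halving bound of \cref{lem:convergence-wt}, then pass from that $\Sn{w}{\cdot}$ to the scaling factors actually computed in the last step.

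For the first claim, recall from the proof of \cref{thm:convergence-alt-scaling} that $\Sn{\nu}{t+1} = \Sn{\nu}{t} = \exp\parenths[\big]{2\Sn{w}{t+2} + 2\Sn{w}{t+4} + \dotsb} - 1$. Iterating $\Sn{w}{t+2m} \leq (1/2)^{m-1}\Sn{w}{t+2}$ (a consequence of the halving bound of \cref{lem:convergence-wt}) gives $\sum_{m \geq 1}\Sn{w}{t+2m} \leq 2\Sn{w}{t+2}$, so $\VnE{\mu}{t+1}{ij} \leq \exp\parenths[\big]{4\Sn{w}{t+2}} - 1$. Then the monotonicity bound $\Sn{w}{t+2} \leq \Sn{w}{t+1}$ of the same lemma yields $\VnE{\mu}{t+1}{ij} \leq \exp\parenths[\big]{4\Sn{w}{t+1}} - 1$, and requiring the right-hand side to be at most $\tau$ reduces to $\Sn{w}{t+1} \leq \log(1+\tau)/4$. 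Since $\Sn{w}{t+1} = \max_k\abs{\log\pptk{t+1}{k}}$, this is precisely $(1+\tau)^{-1/4} \leq \pptk{t+1}{k} \leq (1+\tau)^{1/4}$ for all $k$, as claimed. For the second claim, the analogous starting point is $\VnE{\mu}{t+2}{ij} \leq \Sn{\nu}{t+2} = \exp\parenths[\big]{2\Sn{w}{t+4} + 2\Sn{w}{t+6} + \dotsb} - 1 \leq \exp\parenths[\big]{4\Sn{w}{t+4}} - 1$. Chaining both parts of \cref{lem:convergence-wt} gives $\Sn{w}{t+4} \leq \Sn{w}{t+3} \leq 0.5\,\Sn{w}{t+2}$, so $\VnE{\mu}{t+2}{ij} \leq \exp\parenths[\big]{2\Sn{w}{t+2}} - 1$, and $\Sn{w}{t+2} \leq \log(1+\tau)/2$ suffices. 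By \cref{eqn:rpti-leq-one} and its symmetric analogue, $\rpti{t+2}{i} \leq 1$ and $\sptj{t+2}{j} \leq 1$, so the absolute values in $\Sn{w}{t+2} = \max\parenths[\big]{\max_i\abs{\log\rpti{t+2}{i}},\max_j\abs{\log\sptj{t+2}{j}}}$ simplify, and the bound is equivalent to $\rpti{t+2}{i} \geq (1+\tau)^{-1/2}$ and $\sptj{t+2}{j} \geq (1+\tau)^{-1/2}$.

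The main obstacle is choosing the right mixture of the two bounds in \cref{lem:convergence-wt} at each step: the trivial monotonicity bound $\Sn{w}{t+2} \leq \Sn{w}{t+1}$ is what lets us bound $\Sn{\nu}{t+1}$ by a quantity observable at the I-step $t+1$ (giving the exponent $1/4$), whereas the halving bound $\Sn{w}{t+3} \leq 0.5\,\Sn{w}{t+2}$ is what sharpens the exponent to $1/2$ once the subsequent O-step has been completed. One should also verify carefully that all geometric-series manipulations only use bounds that hold for $t = t_0, t_0+2,\dotsc$ (where the halving lemma applies), and that the symmetric counterpart of \cref{eqn:rpti-leq-one} for $\sptj{t+2}{j}$ is available so that the log-absolute-values in the second claim unfold into one-sided lower bounds rather than two-sided ones.
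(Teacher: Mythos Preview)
Your proposal is correct and follows essentially the same approach as the paper's proof: both collapse the infinite series for $\Sn{\nu}{\cdot}$ to a single $\Sn{w}{\cdot}$ term via the halving bound of \cref{lem:convergence-wt}, then use $\Sn{w}{t+2}\le\Sn{w}{t+1}$ (first claim) or read off $\Sn{w}{t+2}$ directly (second claim) and translate back to the observable scaling factors. The only cosmetic difference is that for the second claim you pass through the intermediate bound $\exp(4\Sn{w}{t+4})-1$ before applying $\Sn{w}{t+4}\le 0.5\,\Sn{w}{t+2}$, whereas the paper goes in one step from $\sum_{m\ge2}\Sn{w}{t+2m}\le \Sn{w}{t+2}$ to $\exp(2\Sn{w}{t+2})-1$; both routes are equivalent.
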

\begin{proof}
See \cref{app:stopping-criterion}.
\end{proof}

Finally, we note that \cref{alg:alt_scaling} does not specify which form of
scaling to apply first.  While the analysis in this section applies to either
choice, we note that the limits of the two sequences are not identical---they
depend on which step is applied first.  We conclude this subsection with
examples demonstrating that these two choices can produce significantly
different results (in the case of one iteration of \cref{alg:alt_scaling}) and
that neither choice is always preferable.  \cref{ex:inout_outin_scaling} shows a
case where performing outside followed by inside scaling is more accurate than
performing inside followed by outside scaling; the opposite is true for the case
of \cref{ex:outin_inout_scaling}.

\begin{example}\label{ex:inout_outin_scaling}
Consider the matrices
\begin{equation*}
\M{A} =
\begin{bmatrix}
1 & \smallentry^{-1} \\
1 & 1 \\
\end{bmatrix},\quad
\M{B} =
\begin{bmatrix}
\smallentry & 1 \\
\smallentry & 1  \\
\end{bmatrix},\quad
\M{C} = \M{A}\cdot\M{B} =
\begin{bmatrix}
1 + \smallentry & 1 + \smallentry^{-1} \\
2\smallentry & 2  \\
\end{bmatrix}
\end{equation*}
for small $\smallentry > 0$.
We consider one step of alternating scaling.
Performing outside and then inside scaling computes
\begin{equation}\label{eqn:out_in_mats}
\M{A}' \leftarrow
\begin{bmatrix}
\smallentry & 1 \\
1 & 1 \\
\end{bmatrix},\quad
\M{B}' \leftarrow
\begin{bmatrix}
1 & 1 \\
1 & 1 \\
\end{bmatrix},\quad
\M{C}' \leftarrow
\begin{bmatrix}
1 + \smallentry & 1 + \smallentry  \\
2 & 2 \\
\end{bmatrix}
\end{equation}
and inside followed by outside scaling computes
\begin{equation}\label{eqn:in_out_mats}
\M{A}' \leftarrow
\begin{bmatrix}
\smallentry^{1/2} & 1 \\
1 & \smallentry^{1/2} \\
\end{bmatrix},\quad
\M{B}' \leftarrow
\begin{bmatrix}
\smallentry^{1/2} & \smallentry^{1/2} \\
1 & 1 \\
\end{bmatrix},\quad
\M{C}' \leftarrow
\begin{bmatrix}
1 + \smallentry^{1/4} & 1 + \smallentry^{1/4} \\
2 \cdot \smallentry^{1/2} & 2 \cdot \smallentry^{1/2} \\
\end{bmatrix}
\end{equation}

Consider the computation of entry $\ME{C}{21}$ with Strassen's algorithm:
\begin{align*}
m_2 = (\ME{A}{21}' + \ME{A}{22}')\ME{B}{11}',\quad
m_4 = \ME{A}{22}'(\ME{B}{21}' - \ME{B}{11}'),\quad
\ME{C}{21}' = m_2 + m_4.
\end{align*}
With $\M{A}'$ and $\M{B}'$ in \cref{eqn:out_in_mats}, all sub-terms are $O(1)$ and $\ME{C}{21}'$ is $O(1)$,
whereas for $\M{A}'$ and $\M{B}'$ in \cref{eqn:in_out_mats}, there are $O(\smallentry^{1/4})$ sub-terms and $\ME{C}{21}'$ is $O(\smallentry^{1/2})$.

From \cref{prop:alt-scaling}, the absolute error bound for entry $\ME{C}{21}=O(z)$ with no scaling is $O(1/z)$, with outside and then inside is $O(z)$, and with inside and then outside is $O(1/z^{1/2})$.
Thus, using only one step of \cref{alg:alt_scaling}, the accuracy of starting with an \Ostep{} can be much better than that of starting with an \Istep{}.
\end{example}

\begin{example}\label{ex:outin_inout_scaling}
Consider the matrices
\begin{equation*}
\M{A} =
\begin{bmatrix}
1 & \smallentry \\
\smallentry & \smallentry  \\
\end{bmatrix},\quad
\M{B} =
\begin{bmatrix}
\smallentry & 1 \\
1 & \smallentry^{-1}  \\
\end{bmatrix},\quad
\M{C} = \M{A}\cdot\M{B} =
\begin{bmatrix}
2\smallentry & 2 \\
\smallentry+\smallentry^2 & 1+\smallentry  \\
\end{bmatrix}
\end{equation*}
for small $\smallentry > 0$.
We consider one step of alternating scaling.
Performing outside and then inside scaling computes
\begin{equation}\label{eqn:out_in_mats2}
\M{A}' \leftarrow
\begin{bmatrix}
1 & \smallentry \\
1 & 1 \\
\end{bmatrix},\quad
\M{B}' \leftarrow
\begin{bmatrix}
\smallentry & 1 \\
\smallentry & 1 \\
\end{bmatrix},\quad
\M{C}' \leftarrow
\begin{bmatrix}
\smallentry + \smallentry^2 & 1 + \smallentry  \\
2\smallentry & 2 \\
\end{bmatrix}
\end{equation}
and inside followed by outside scaling computes
\begin{equation}\label{eqn:in_out_mats2}
\M{A}' \leftarrow
\begin{bmatrix}
1 & 1 \\
\smallentry & 1 \\
\end{bmatrix},\quad
\M{B}' \leftarrow
\begin{bmatrix}
1 & 1 \\
1 & 1 \\
\end{bmatrix},\quad
\M{C}' \leftarrow
\begin{bmatrix}
2 & 2 \\
1+ \smallentry & 1 + \smallentry \\
\end{bmatrix}.
\end{equation}

Consider the computation of entry $\ME{C}{21}$ with Strassen's algorithm:
\begin{align*}
m_2 = (\ME{A}{21}' + \ME{A}{22}')\ME{B}{11}',\quad
m_4 = \ME{A}{22}'(\ME{B}{21}' - \ME{B}{11}'),\quad
\ME{C}{21}' = m_2 + m_4.
\end{align*}
With $\M{A}'$ and $\M{B}'$ in \cref{eqn:out_in_mats2}, $\ME{C}{21}'$ is $O(\smallentry)$ but there are $O(1)$ subterms,
whereas for $\M{A}'$ and $\M{B}'$ in \cref{eqn:in_out_mats2}, $\ME{C}{21}'$ and all subterms are $O(1)$.
This case illustrates that using only one step of \cref{alg:alt_scaling}, the accuracy of starting with an \Istep{} can be much better than that of starting with an \Ostep{}.
\end{example}

\subsection{Scaling is not always enough}

We now provide a simple example that shows how Strassen's algorithm computes a result with large relative error, using any of the scaling algorithms presented in this section.

\begin{example}\label{ex:scaling_fails}
Consider the matrices
\begin{equation}\label{eqn:scaling_fails}
\M{A} =
\begin{bmatrix}
1 & \smallentry \\
\smallentry & 1 \\
\end{bmatrix},\;
\M{B} =
\begin{bmatrix}
1 & \smallentry \\
\smallentry & 1 \\
\end{bmatrix},\;
\M{C} = \M{A}\cdot\M{B} =
\begin{bmatrix}
1 + \smallentry^2 & 2\smallentry \\
2\smallentry & 1 + \smallentry^2  \\
\end{bmatrix}
\end{equation}
for small $\smallentry > 0$.
In this case, both outside and inside scaling leave the matrix unchanged.
When computing $\ME{C}{12}$,
\begin{align*}
m_3             &= \ME{A}{11}(\ME{B}{12} - \ME{B}{22}) = 1(z - 1) \\
m_5             &= (\ME{A}{11} + \ME{A}{12})\ME{B}{22} = (1 + z)1 \\
\ME{C}{12} &= m_3 + m_5,
\end{align*}
There are subterms on the order of unity, so the relative error is $O(1 / \smallentry)$.
\end{example}

\subsection{Numerical experiments}\label{sec:scaling_experiments}

We tested the scaling algorithms on samples of random matrices whose entries
were not as contrived as those in the prior sections.  We used a sample of
$\M{A} \in \mathbb{R}^{N \times N}$ and $\M{B}^{N \times N}$ from the following
distributions:
\begin{enumerate}
\item $\ME{A}{ij}, \ME{B}{ij} \sim$ Uniform(0, 1)
\item $\ME{A}{ij} \sim \text{Uniform}(0, 1 / N^2)$ if $j > N / 2$, otherwise, $\ME{A}{ij} \sim \text{Uniform}(0, 1)$;
$\ME{B}{ij} \sim \text{Uniform}(0, 1 / N^2)$ if $i < N / 2$, otherwise $\ME{B}{ij} \sim \text{Uniform}(0, 1)$
\item $\ME{A}{ij} \sim \text{Uniform}(0, N^2)$ if $i < N / 2$ and $j > N / 2$, otherwise, $\ME{A}{ij} \sim \text{Uniform}(0, 1)$;
$\ME{B}{ij} \sim \text{Uniform}(0, 1 / N^2)$ if $j < N / 2$, otherwise $\ME{B}{ij} \sim \text{Uniform}(0, 1)$
\end{enumerate}
Samples from the first distribution are well-behaved for fast matrix
multiplication algorithms.  On the other hand, samples from the second and third
distributions are adversarial and model the matrices in
\cref{ex:in_scaling,ex:inout_outin_scaling}, respectively.

We sampled 100 pairs of matrices ($N = 2000$) from each distribution and
computed the error of Strassen's algorithm with $L$ recursive levels, $L = 1, 2,
\ldots, 6$.  Specifically, the error was the maximum value of $\max_{ij}
\vert\hat{c}_{ij} - \ME{C}{ij}\vert / \vert \ME{C}{ij} \vert$ over the 100
samples, where $\M{C}$ was computed with quadruple precision.
\cref{fig:scaling} plots these errors.  For the first probability distribution,
the relative errors are all roughly the same.  With the second distribution,
only inside-outside scaling or 2-times repeated outside-inside scaling compute
relatively accurate solutions.  In this case, inside and outside-inside scaling
are moderately more accurate than no scaling or outside scaling, but they still
produce relative errors several orders of magnitude larger than the best case.
Finally, for the third distribution, inside scaling and no scaling result in
much larger relative errors, and inside-outside scaling is slightly worse than
outside, outside-inside, or 10-times repeated outside-inside scaling.  These
experiments demonstrate that with no prior knowledge of the distribution,
repeated outside-inside scaling is the safe choice for fast matrix
multiplication.

\begin{figure}
\centering
\includegraphics[width=4.2cm]{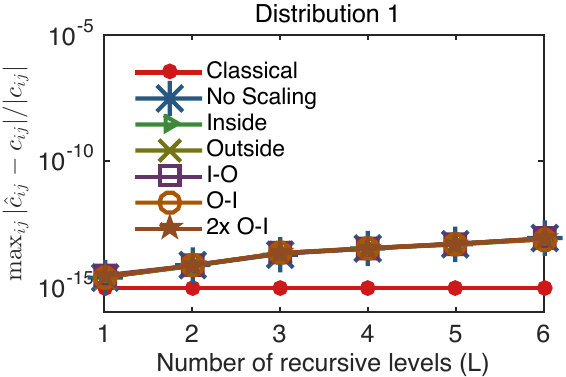}
\includegraphics[width=4.2cm]{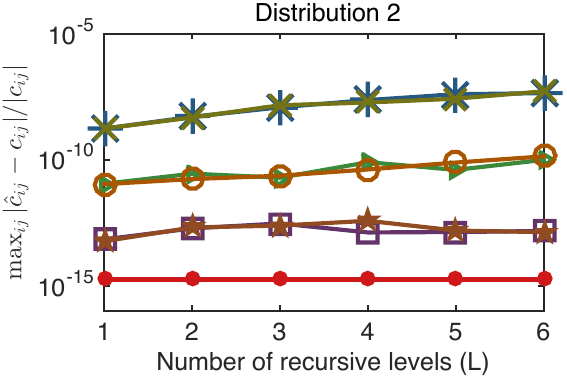}
\includegraphics[width=4.2cm]{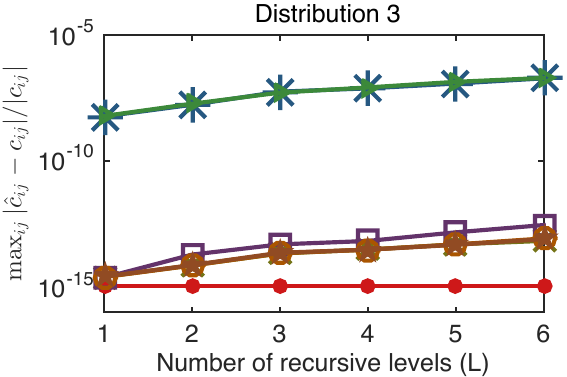}
\caption{%
Relative error of Strassen's algorithm as a function of the number of recursive
steps, $L$, for several scaling techniques.  The results in each plot are for
matrices $\M{A}$ and $\M{B}$ sampled from different probability distributions.
(Top) Stability is well-behaved, and no scaling is necessary for small relative
errors.  (Middle) The matrices are adversarial and inside-outside or 2-times
repeated outside-inside scaling have the smallest relative errors.  (Right) The
matrices are adversarial and outside, outside-inside, or 2-times repeated
outside-inside scaling have the smallest relative errors.
}
\label{fig:scaling}
\end{figure}

Each iteration of outside or inside scaling is $O(MK+KN+MN)$ flops, so scaling
does not affect the asymptotic performance.  However, quadratic costs do affect
the practical implementation of fast matrix
multiplication~\cite{benson2014framework}.  Subsequently we tested the
performance impact of scaling.  We use \emph{effective
  gflops}~\cite{benson2014framework,lipshitz2012communication} to measure the
performance of multiplying an $M \times K$ matrix by a $K \times N$ matrix:
\begin{equation}
\frac{2 \cdot MKN - MN}{\text{time in seconds}} \cdot \text{1e-9}.
\end{equation}
This lets us compare fast matrix multiplication algorithms to the classical
algorithm on a familiar inverse-time scale.  All experiments were conducted on a
single compute node on NERSC's Edison machine.  Each node has two 12-core Intel
2.4 GHz Ivy Bridge processors and 64 GB of memory.  Our experiments were
single-threaded.  We report the median of five trials for each timing result.

\cref{fig:scaling_performance} summarizes the performance results for Strassen's
algorithm ($L = 1$), with and without two steps of \cref{alg:alt_scaling}, for
multiplying square matrices of dimension $N$.  There is a noticeable impact on
performance.  Strassen's without scaling out-performs the classical algorithm
for $N \ge 2500$ while scaling pushes this threshold to $N \ge 3500$.  As $N$
grows, the performance impact of scaling gets smaller.  This follows from the
asymptotic analysis---as $N$ grows, the impact from quadratic terms shrinks.

\begin{figure}
\centering
\includegraphics[width=6.5cm]{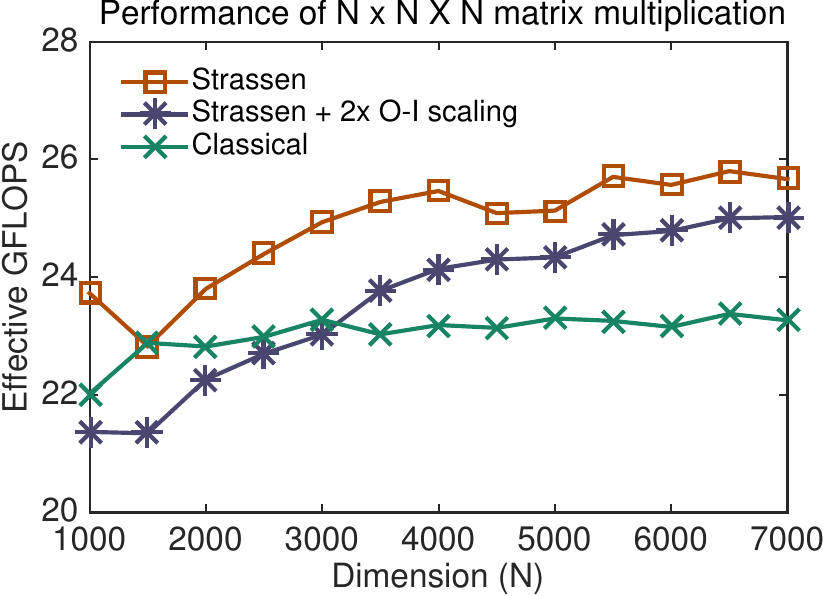}
\caption{%
Performance of Strassen's algorithm ($L = 1$), with and without two steps of
inside-outside scaling, and the classical algorithm.
}
\label{fig:scaling_performance}
\end{figure}

\section{Discussion}
\label{sec:discussion}

One of the central components of our algorithmic error analysis is that two
data-independent quantities drive the error bounds for fast matrix
multiplication.  First, $\qmax$ captures the accumulation error from adding
matrices.  Second, $\emax$ bounds the growth in the magnitude of intermediate
terms.  Our results in \cref{sec:selection} show that having a small $\emax$ is
important, but this does not fully characterize stability in practice.  The same
result has been observed when comparing Strassen's algorithm and the Winograd
variant~\cite{higham2002accuracy}.  An encouraging result from our experiments
is that the number of non-zeroes in the $\M U$, $\M V$, and $\M W$ matrices,
which determines the constant in the computational complexity, is positively
correlated with $\emax$.  In other words, for a given base case and algorithm
rank, by improving performance, we generally also improve stability.  Another
lesson from our analysis is that we should not think of using fast algorithms
asymptotically but rather as having a fixed number of recursive levels.  This
leads to better performance in practice~\cite{benson2014framework} and also to
the improved error bounds and numerical stability presented in
\cref{sec:bounds,sec:selection}.  Finally, because the principal quantities for
understanding algorithmic error ($\emax$ and $\qmax$) are independent of the
asymptotic complexity, we have new metrics over which to optimize when searching
for fast matrix multiplication algorithms.

For performance reasons, the best choice of fast algorithm depends on the shape
of the matrices being multiplied~\cite{benson2014framework}.  In general, a
choice of algorithm can be made at each recursive level.  Subsequently, we
believe that uniform non-stationary algorithms are the right choice in practice
for achieving the best performance.  \Cref{thm:non_stationary_analysis} provides
the appropriate error bounds for this case.

The analysis in \cref{sec:non_uniform_non_stationary} formalizes the error
analysis for existing techniques to improve stability of Strassen's algorithm
and the Winograd variant~\cite{castrapel2007precision,d2014better} and also
generalizes the approach for all fast matrix multiplication algorithms.  The
analysis provides the formula over which to optimize when considering
non-uniform, non-stationary algorithms.  However, finding the best algorithm is
a combinatorial optimization problem that grows exponentially in the number of
recursive levels.  Algorithm design in this space is an interesting avenue for
future research.

Using these algorithmic techniques improves the normwise accuracy of the
computed product.  However, because the errors are normwise, small elements of
the product can be computed less accurately than warranted by their condition
numbers.  By pre- and post-processing the data, we can improve componentwise
accuracy as well.  Specifically, we analyzed a hierarchy of diagonal scaling
techniques that reduce the number of cases where fast matrix multiplication
yields relatively inaccurate small entries in the product.  Nevertheless, there
are cases that cannot be solved by our diagonal scaling algorithms (\eg,
\cref{ex:scaling_fails}).  When scaling helps, a couple of iterations are
sufficient, and this is backed up by \cref{thm:convergence-alt-scaling}.

The asymptotic operation cost of diagonal scaling is proportional to the size of the matrices, so it is
dominated by cost of current matrix multiplication algorithms.  In our
experiments, we found that scaling does incur a noticeable performance penalty
for reasonably sized matrices, but fast matrix multiplication with diagonal
scaling still can outperform the classical algorithm.  We note that our diagonal
scaling implementation is not fully optimized; for example, it is possible to
overlap inside and outside scaling and delay updating actual matrix entries,
which can both reduce the memory traffic overhead.

\section*{Acknowledgments}

We thank Madan Musuvathi for providing the $\bc{2}{3}{2}$ algorithms from the SAT solver.
We also thank the editor and anonymous reviewers for their suggestions, which make the paper better and more readable.

\bibliographystyle{siamplain}
\bibliography{refs}

\appendix

\section{Strassen's Algorithm}
\label{app:strassen}

Strassen's algorithm~\cite{strassen1969gaussian} is a $\bc{2}{2}{2}$ algorithm
specified by the following $\M{U}$, $\M{V}$, and $\M{W}$ matrices:
\newcommand{\ph}{\phantom{-}}
\begin{align*}
\M{U} &=
\begin{bmatrix}
\ph1 & \ph0 & \ph1 & \ph0 & \ph1 & -1 & \ph0 \ph \\
\ph0 & \ph0 & \ph0 & \ph0 & \ph1 &  \ph0 & \ph1 \ph \\
\ph0 & \ph1 & \ph0 & \ph0 & \ph0 &  \ph1 & \ph0 \ph \\
\ph1 & \ph1 & \ph0 & \ph1 & \ph0 &  \ph0 & -1 \ph \\
\end{bmatrix} \\
\M{V} &= 
\begin{bmatrix}
\ph1 & \ph1 & \ph0  & -1 & \ph0 & \ph1 & \ph0 \ph \\
\ph0 & \ph0 & \ph1  & \ph0  & \ph0 & \ph1 & \ph0 \ph \\
\ph0 & \ph0 & \ph0  & \ph1  & \ph0 & \ph0 & \ph1 \ph \\
\ph1 & \ph0 & -1 & \ph0  & \ph1 & \ph0 & \ph1 \ph \\
\end{bmatrix} \\
\M{W} &= 
\begin{bmatrix}
\ph1 &  \ph0 & \ph0 & \ph1 & -1 & \ph0 & \ph1 \ph \\
\ph0 &  \ph1 & \ph0 & \ph1 &  \ph0 & \ph0 & \ph0 \ph \\
\ph0 &  \ph0 & \ph1 & \ph0 &  \ph1 & \ph0 & \ph0 \ph \\
\ph1 & -1 & \ph1 & \ph0 &  \ph0 & \ph1 & \ph0 \ph \\
\end{bmatrix}. 
\end{align*}
Note that the rows of $\M U$ and $\M V$ correspond to a column-major ordering of
the entries of the input matrices and the rows of $\M W$ correspond to a
row-major ordering of the output matrix, following the convention of previous
work \cite{Brent70,JM86}.  We point out that this algorithm is cyclic-invariant,
so that $\alg{\M U}{\M V}{\M W}=\alg{\M W}{\M U}{\M V}=\alg{\M V}{\M W}{\M U}$
(up to permutations on the columns of the matrices), which implies that all
three rotations have the same $\qmax$ and $\emax$ values.

\section{$\bc{3}{2}{3}$ fast matrix multiplication algorithm}
\label{app:fast323}

The following algorithm for base case $\bc{3}{2}{3}$ has 94 nonzeros with $\emax=20$ and $\qmax=10$.
\setcounter{MaxMatrixCols}{15}
\newcommand{\scaleboxsize}{0.95}
\begin{align*}
\label{eqn:fast323}
\M{U} &=
\scalebox{\scaleboxsize}{$
\begin{bmatrix}
\ph0 & \ph1 & \ph0 & \ph0 & -1 & \ph1 & \ph1 & \ph0 & \ph0 & \ph0 & -1 & \ph0 & \ph0 & \ph0 & -1 \ph \\
\ph0 & \ph1 & \ph0 & \ph0 & \ph0 & \ph0 & \ph0 & -1 & \ph0 & -1 & \ph0 & \ph0 & \ph1 & -1 & \ph0 \ph \\ 
\ph0 & \ph0 & \ph0 & \ph1 & \ph0 & \ph0 & \ph0 & -1 & \ph0 & -1 & \ph1 & \ph0 & \ph0 & -1 & \ph0 \ph \\ 
-1 & \ph0 & \ph1 & \ph0 & \ph1 & \ph0 & \ph0 & \ph0 & \ph0 & \ph0 & \ph1 & \ph0 & \ph0 & \ph0 & \ph1 \ph \\
-1 & \ph0 & \ph0 & \ph0 & \ph0 & \ph1 & \ph0 & \ph1 & \ph0 & \ph1 & \ph0 & \ph1 & -1 & \ph0 & \ph0 \ph \\
\ph0 & \ph0 & -1 & \ph0 & \ph0 & \ph0 & \ph0 & \ph1 & \ph1 & \ph0 & -1 & \ph0 & \ph0 & \ph0 & -1 \ph \\
\end{bmatrix}
$}  \\
\M{V} &= 
\scalebox{\scaleboxsize}{$
\begin{bmatrix}
\ph0 & \ph0 & \ph1 & \ph1 & \ph0 & \ph0 & \ph0 & \ph0 & \ph0 & \ph0 & \ph1 & \ph0 & \ph0 & \ph1 & \ph1 \ph \\
\ph0 & \ph0 & \ph1 & \ph0 & \ph0 & \ph0 & \ph0 & -1 & \ph1 & -1 & \ph0 & \ph0 & \ph0 & \ph1 & \ph0 \ph \\
\ph0 & -1 & \ph0 & \ph0 & \ph0 & \ph1 & \ph0 & \ph0 & \ph0 & \ph1 & \ph0 & \ph1 & \ph1 & -1 & \ph0 \ph \\
-1 & \ph0 & \ph0 & \ph0 & \ph0 & \ph0 & \ph0 & \ph1 & \ph0 & \ph1 & \ph0 & \ph1 & \ph0 & -1 & \ph0 \ph \\
\ph0 & \ph1 & \ph1 & \ph0 & \ph0 & \ph0 & \ph1 & \ph0 & \ph0 & \ph0 & \ph1 & \ph0 & \ph0 & \ph0 & \ph1 \ph \\
\ph1 & \ph0 & \ph1 & \ph0 & \ph1 & \ph1 & \ph1 & \ph0 & \ph0 & \ph0 & \ph0 & \ph0 & \ph0 & \ph0 & \ph1 \ph \\
\end{bmatrix} 
$} \\
\M{W} &= 
\scalebox{\scaleboxsize}{$
\begin{bmatrix}
\ph0 & \ph0 & \ph1 & \ph0 & \ph0 & \ph0 & -1 & \ph0 & \ph1 & \ph0 & \ph0 & \ph0 & \ph0 & \ph0 & -1 \ph \\
\ph1 & \ph0 & \ph0 & \ph0 & \ph1 & \ph1 & \ph0 & \ph0 & \ph0 & \ph0 & \ph0 & -1 & \ph0 & \ph0 & \ph0 \ph \\
\ph0 & \ph0 & \ph0 & \ph0 & \ph1 & \ph0 & \ph1 & \ph0 & \ph0 & \ph0 & \ph0 & \ph0 & \ph0 & \ph0 & \ph0 \ph \\
\ph0 & \ph0 & \ph0 & -1 & \ph0 & \ph0 & \ph0 & \ph0 & \ph0 & -1 & \ph0 & \ph1 & \ph0 & -1 & \ph0 \ph \\
\ph0 & \ph0 & \ph0 & \ph0 & \ph0 & \ph0 & \ph0 & \ph0 & \ph0 & \ph0 & \ph0 & \ph1 & \ph1 & \ph0 & \ph0 \ph \\
\ph0 & \ph1 & \ph0 & \ph0 & \ph0 & \ph1 & -1 & \ph0 & \ph0 & \ph0 & \ph0 & \ph0 & \ph1 & \ph0 & \ph0 \ph \\
\ph0 & \ph0 & \ph0 & \ph1 & \ph0 & \ph0 & \ph0 & \ph0 & \ph1 & \ph0 & \ph0 & \ph0 & \ph0 & \ph0 & \ph0 \ph \\
\ph0 & \ph0 & \ph0 & \ph0 & \ph0 & \ph0 & \ph0 & \ph1 & \ph1 & -1 & \ph0 & \ph0 & -1 & \ph0 & \ph0 \ph \\
\ph0 & \ph0 & \ph0 & -1 & \ph1 & \ph0 & \ph0 & \ph0 & \ph0 & \ph0 & \ph1 & \ph0 & \ph0 & \ph0 & -1 \ph \\
\end{bmatrix}
$}. 
\end{align*}
Note that the rows of $\M U$ and $\M V$ correspond to a column-major ordering of the entries of the input matrices and the rows of $\M W$ correspond to a row-major ordering of the output matrix, which implies that $\alg{\M W}{\M U}{\M V}$ is an algorithm for $\bc{3}{3}{2}$ and $\alg{\M V}{\M W}{\M U}$ is an algorithm for $\bc{2}{3}{3}$.

\section{$\bc{4}{4}{2}$ fast matrix multiplication algorithm}
\label{app:fast442}

The following algorithm specifies a rank $26$ fast matrix multiplication
algorithm with base case $\bc{4}{4}{2}$.

\setcounter{MaxMatrixCols}{26}
\renewcommand{\scaleboxsize}{0.55}
\begin{align*}
\label{eqn:fast442}
\M{U} &= \frac12
\scalebox{\scaleboxsize}{$
\begin{bmatrix}
\ph 2 & \ph 0 & \ph 0 & -2 & \ph 0 & \ph 0 & \ph 0 & \ph 0 & -2 & \ph 0 & \ph 0 & \ph 0 & \ph 0 & -2 & \ph 0 & \ph 0 & \ph 0 & \ph 0 & \ph 0 & \ph 0 & \ph 0 & \ph 0 & \ph 0 & \ph 0 & \ph 0 & \ph 0 \\
\ph 0 & \ph 0 & \ph 0 & \ph 0 & \ph 0 & -2 & \ph 0 & \ph 0 & \ph 0 & \ph 0 & \ph 2 & \ph 2 & \ph 2 & \ph 0 & \ph 0 & \ph 0 & \ph 0 & \ph 0 & \ph 0 & \ph 0 & \ph 0 & \ph 0 & \ph 0 & \ph 0 & \ph 0 & \ph 0 \\
\ph 0 & \ph 0 & \ph 2 & \ph 0 & \ph 0 & -2 & \ph 0 & \ph 0 & \ph 0 & \ph 0 & \ph 2 & \ph 2 & \ph 2 & -2 & \ph 0 & \ph 0 & \ph 0 & \ph 0 & \ph 0 & \ph 0 & -2 & \ph 0 & \ph 0 & \ph 2 & \ph 0 & \ph 0 \\
\ph 0 & \ph 0 & \ph 0 & \ph 0 & \ph 0 & \ph 0 & \ph 0 & \ph 0 & \ph 0 & \ph 0 & \ph 0 & -2 & \ph 0 & \ph 2 & \ph 0 & \ph 0 & \ph 0 & \ph 0 & \ph 0 & \ph 0 & \ph 0 & \ph 0 & \ph 0 & \ph 0 & \ph 0 & \ph 0 \\
\ph 0 & \ph 2 & \ph 0 & \ph 0 & \ph 0 & \ph 0 & \ph 0 & \ph 1 & \ph 0 & \ph 0 & \ph 0 & \ph 0 & \ph 0 & \ph 0 & \ph 0 & \ph 0 & \ph 0 & \ph 0 & -1 & \ph 0 & \ph 0 & \ph 0 & \ph 0 & \ph 0 & \ph 0 & \ph 2 \\
\ph 0 & \ph 2 & \ph 0 & \ph 0 & \ph 0 & \ph 0 & \ph 0 & \ph 0 & \ph 0 & \ph 0 & \ph 0 & \ph 0 & -2 & \ph 0 & \ph 0 & \ph 1 & \ph 0 & \ph 0 & \ph 0 & \ph 1 & \ph 0 & -2 & \ph 0 & \ph 2 & -2 & \ph 0 \\
\ph 0 & \ph 2 & \ph 0 & \ph 0 & \ph 0 & \ph 0 & \ph 0 & \ph 1 & \ph 0 & \ph 0 & \ph 0 & \ph 0 & \ph 0 & \ph 0 & \ph 0 & \ph 0 & \ph 0 & \ph 0 & -1 & \ph 0 & \ph 0 & -2 & \ph 0 & \ph 0 & \ph 0 & \ph 0 \\
\ph 0 & \ph 0 & \ph 0 & \ph 1 & \ph 2 & \ph 0 & -1 & -1 & \ph 0 & -1 & -2 & \ph 2 & \ph 0 & -2 & -2 & \ph 0 & \ph 0 & \ph 1 & \ph 0 & \ph 0 & \ph 1 & \ph 0 & \ph 0 & \ph 0 & \ph 0 & \ph 0 \\
\ph 0 & \ph 0 & \ph 0 & \ph 0 & \ph 0 & \ph 0 & \ph 0 & \ph 1 & \ph 0 & \ph 0 & \ph 0 & \ph 0 & \ph 0 & \ph 0 & \ph 0 & -1 & \ph 1 & \ph 1 & \ph 0 & \ph 0 & \ph 0 & \ph 0 & \ph 0 & \ph 0 & \ph 0 & \ph 0 \\
\ph 0 & \ph 2 & \ph 0 & \ph 0 & -2 & \ph 0 & \ph 0 & \ph 0 & \ph 0 & \ph 0 & \ph 0 & \ph 0 & \ph 0 & \ph 0 & \ph 0 & \ph 1 & \ph 0 & \ph 0 & \ph 0 & \ph 1 & \ph 0 & -2 & \ph 2 & \ph 0 & \ph 2 & \ph 0 \\
\ph 0 & \ph 2 & \ph 0 & \ph 0 & -2 & \ph 0 & \ph 0 & \ph 1 & \ph 0 & \ph 0 & \ph 0 & \ph 0 & \ph 0 & \ph 0 & \ph 0 & \ph 0 & \ph 0 & \ph 1 & \ph 0 & \ph 0 & \ph 0 & -2 & \ph 2 & \ph 0 & \ph 2 & \ph 0 \\
\ph 0 & \ph 0 & \ph 0 & \ph 1 & \ph 0 & \ph 0 & -1 & -1 & \ph 0 & \ph 1 & \ph 0 & \ph 0 & \ph 0 & \ph 0 & \ph 0 & \ph 0 & \ph 0 & -1 & \ph 0 & \ph 0 & -1 & \ph 0 & \ph 0 & \ph 0 & \ph 0 & \ph 0 \\
\ph 0 & -2 & \ph 0 & \ph 0 & \ph 0 & \ph 0 & \ph 0 & \ph 0 & \ph 2 & \ph 0 & \ph 0 & \ph 0 & \ph 0 & \ph 0 & \ph 0 & \ph 1 & -1 & -1 & \ph 1 & \ph 0 & \ph 0 & \ph 0 & \ph 0 & \ph 0 & \ph 0 & -2 \\
\ph 0 & \ph 0 & \ph 0 & \ph 0 & \ph 2 & \ph 0 & \ph 0 & \ph 0 & \ph 0 & \ph 0 & \ph 0 & \ph 0 & \ph 2 & \ph 0 & \ph 0 & \ph 0 & \ph 0 & \ph 0 & \ph 0 & \ph 0 & \ph 0 & \ph 0 & \ph 0 & \ph 0 & \ph 0 & \ph 0 \\
\ph 0 & \ph 0 & \ph 0 & \ph 0 & \ph 2 & \ph 0 & \ph 0 & \ph 0 & \ph 0 & \ph 0 & \ph 0 & \ph 0 & \ph 0 & \ph 0 & \ph 0 & \ph 0 & \ph 0 & -1 & \ph 1 & \ph 0 & \ph 0 & \ph 0 & \ph 0 & \ph 0 & -2 & \ph 0 \\
\ph 0 & \ph 0 & \ph 0 & \ph 0 & -2 & \ph 0 & \ph 0 & \ph 0 & \ph 0 & \ph 0 & \ph 2 & \ph 0 & \ph 0 & \ph 2 & \ph 2 & \ph 0 & \ph 0 & \ph 0 & \ph 0 & \ph 0 & \ph 0 & \ph 0 & \ph 0 & \ph 0 & \ph 0 & \ph 0 \\
\end{bmatrix}$
} \\
\M{V} &= \frac12
\scalebox{\scaleboxsize}{$
\begin{bmatrix}
\ph 2 & \ph 0 & -2 & \ph 0 & \ph 0 & -2 & \ph 0 & \ph 0 & \ph 0 & \ph 0 & -2 & \ph 0 & \ph 0 & -2 & -2 & \ph 0 & \ph 0 & \ph 0 & \ph 0 & \ph 0 & \ph 0 & \ph 0 & \ph 0 & \ph 0 & \ph 0 & \ph 0 \\
\ph 2 & \ph 1 & \ph 2 & \ph 2 & \ph 0 & \ph 0 & \ph 0 & \ph 2 & \ph 2 & \ph 0 & \ph 0 & \ph 0 & \ph 0 & \ph 0 & \ph 0 & \ph 0 & \ph 2 & -2 & -2 & \ph 2 & \ph 2 & \ph 0 & \ph 0 & -2 & \ph 1 & \ph 0 \\
-2 & \ph 0 & -2 & -2 & \ph 0 & -2 & -2 & \ph 0 & \ph 0 & -2 & \ph 0 & -2 & \ph 0 & \ph 0 & \ph 0 & \ph 0 & \ph 0 & \ph 0 & \ph 0 & \ph 0 & -2 & \ph 0 & \ph 0 & \ph 0 & \ph 0 & \ph 0 \\
-2 & \ph 0 & \ph 2 & -2 & \ph 0 & \ph 0 & -2 & \ph 0 & -2 & \ph 2 & \ph 0 & \ph 0 & \ph 0 & \ph 0 & \ph 0 & -2 & -2 & \ph 0 & \ph 0 & \ph 2 & \ph 2 & \ph 0 & -2 & -2 & \ph 0 & \ph 0 \\
\ph 2 & \ph 1 & -2 & \ph 2 & \ph 0 & \ph 0 & \ph 0 & \ph 2 & \ph 2 & \ph 0 & \ph 0 & \ph 0 & \ph 0 & \ph 0 & \ph 0 & \ph 2 & \ph 0 & \ph 2 & \ph 2 & \ph 0 & -2 & \ph 0 & \ph 0 & \ph 2 & -1 & \ph 2 \\
\ph 2 & \ph 1 & -2 & \ph 2 & \ph 0 & \ph 2 & \ph 0 & \ph 2 & \ph 2 & \ph 0 & \ph 2 & \ph 0 & -2 & \ph 0 & \ph 2 & \ph 2 & \ph 0 & \ph 2 & -2 & \ph 0 & -2 & \ph 0 & \ph 0 & \ph 2 & -1 & \ph 0 \\
-2 & -2 & -2 & -2 & \ph 0 & \ph 0 & -2 & \ph 0 & -2 & -2 & \ph 0 & \ph 0 & \ph 0 & \ph 0 & \ph 0 & \ph 0 & \ph 0 & \ph 0 & \ph 0 & \ph 0 & -2 & \ph 2 & \ph 2 & \ph 2 & \ph 0 & -2 \\
-2 & \ph 0 & -2 & -2 & -2 & \ph 2 & -2 & \ph 0 & -2 & -2 & \ph 2 & \ph 0 & -2 & \ph 0 & \ph 0 & \ph 0 & \ph 0 & \ph 0 & \ph 0 & \ph 0 & -2 & \ph 0 & -2 & \ph 2 & -2 & \ph 0 \\
\end{bmatrix} $
}  \\
\M{W} &= \phantom{\frac12}
\scalebox{\scaleboxsize}{$
\begin{bmatrix}
\ph 1 & -1 & \ph 0 & \ph 1 & \ph 0 & \ph 0 & \ph 1 & \ph 1 & \ph 0 & \ph 0 & \ph 0 & \ph 0 & \ph 0 & \ph 0 & \ph 0 & \ph 1 & \ph 1 & \ph 0 & \ph 0 & \ph 1 & \ph 0 & \ph 1 & \ph 0 & \ph 0 & \ph 0 & \ph 1 \\
\ph 0 & \ph 1 & -1 & \ph 0 & \ph 0 & \ph 0 & \ph 0 & \ph 0 & \ph 0 & -1 & \ph 0 & \ph 0 & \ph 0 & \ph 0 & \ph 0 & -1 & -1 & -1 & -1 & -1 & -1 & -1 & \ph 0 & \ph 0 & \ph 0 & -1 \\
\ph 0 & -1 & \ph 0 & \ph 1 & \ph 0 & \ph 0 & \ph 1 & \ph 1 & -1 & \ph 0 & \ph 0 & \ph 0 & \ph 0 & \ph 0 & \ph 0 & \ph 1 & -1 & \ph 0 & \ph 0 & \ph 1 & \ph 0 & \ph 1 & \ph 0 & \ph 0 & \ph 0 & \ph 0 \\
\ph 0 & \ph 0 & \ph 0 & \ph 0 & \ph 0 & \ph 0 & \ph 0 & \ph 0 & \ph 0 & \ph 1 & \ph 0 & \ph 0 & \ph 0 & \ph 0 & \ph 0 & \ph 1 & \ph 1 & \ph 1 & \ph 0 & -1 & \ph 1 & -1 & -1 & -1 & \ph 1 & \ph 0 \\
\ph 0 & \ph 0 & \ph 1 & \ph 0 & \ph 1 & \ph 0 & \ph 0 & \ph 0 & \ph 0 & \ph 1 & -1 & \ph 0 & -1 & \ph 0 & \ph 1 & \ph 1 & \ph 1 & \ph 1 & \ph 0 & \ph 1 & \ph 1 & \ph 0 & \ph 0 & \ph 0 & \ph 1 & \ph 0 \\
\ph 1 & \ph 0 & -1 & \ph 1 & \ph 0 & \ph 0 & \ph 1 & \ph 0 & \ph 0 & -1 & \ph 0 & \ph 0 & \ph 0 & -1 & \ph 1 & \ph 0 & \ph 0 & \ph 0 & \ph 0 & \ph 0 & -1 & \ph 0 & \ph 0 & \ph 0 & \ph 0 & \ph 0 \\
\ph 0 & \ph 0 & \ph 0 & \ph 0 & -1 & \ph 1 & \ph 0 & \ph 0 & \ph 0 & -1 & \ph 1 & \ph 0 & \ph 0 & \ph 0 & -1 & -1 & -1 & -1 & \ph 0 & \ph 1 & -1 & \ph 0 & \ph 0 & \ph 1 & -1 & \ph 0 \\
\ph 0 & \ph 0 & \ph 0 & \ph 0 & \ph 0 & \ph 1 & \ph 1 & \ph 0 & \ph 0 & \ph 1 & \ph 1 & \ph 1 & \ph 0 & \ph 0 & -1 & \ph 0 & \ph 0 & \ph 0 & \ph 0 & \ph 0 & \ph 0 & \ph 0 & \ph 0 & \ph 0 & \ph 0 & \ph 0 \\
\end{bmatrix}$
} .
\end{align*}

Note that the rows of $\M U$ and $\M V$ correspond to a column-major ordering of
the entries of the input matrices and the rows of $\M W$ correspond to a
row-major ordering of the output matrix, which implies that $\alg{\M W}{\M U}{\M
  V}$ is an algorithm for $\bc{4}{2}{4}$ and $\alg{\M V}{\M W}{\M U}$ is an
algorithm for $\bc{2}{4}{4}$.  However, $\alg{\M V}{\M W}{\M U}$ yields an
$\emax=102$, which is greater than $\alg{\M U}{\M V}{\M W}$'s 89.  The $\emax$
value can be maintained for base case $\bc{2}{4}{4}$ by using a different
transformation that corresponds to transposing the matrix multiplication:
$\alg{\VP{4}{2} \M V}{\VP{4}{4} \M U}{\VP{2}{4} \M W}$, where $\VP{m}{n}$ is the
so-called vec permutation matrix \cite{HS81}, exchanging column-ordering for
row-ordering for a vectorized $m\times n$ matrix.

\section{Convergence analysis of alternating scaling}
\label{app:convergence-alt-scaling}

In this appendix we prove \cref{thm:convergence-alt-scaling}. We start with its first part.
\begin{lemma}\label{lem:accuracy-alt-scaling}
The sequence
\begin{equation*}
    \VnE{r}{t}{i} \VnE{s}{t}{j} \norm{\At} \norm{\Bt} \qquad
    \text{for $t = 0, 1, \dotsc$}
\end{equation*}
is monotonically nonincreasing.
\end{lemma}
\begin{proof}
If step $t$ is an \Ostep{}, then
\begin{align*}
    \norm{\At} = \norm{\Bt} &= 1 \,, &
    \VnE{r}{t}{i} &= \VnE{r}{t-1}{i} \norm{\MnE{A}{t-1}{i,:}} \,, &
    \VnE{s}{t}{j} &= \VnE{s}{t-1}{j} \norm{\MnE{B}{t-1}{:,j}} \,,
\end{align*}
and therefore
\begin{align*}
    \VnE{r}{t}{i} \VnE{s}{t}{j} \norm{\At} \norm{\Bt}
        &= \VnE{r}{t-1}{i} \VnE{s}{t-1}{j}
            \norm{\MnE{A}{t-1}{i,:}} \norm{\MnE{B}{t-1}{:,j}} \\
        &\leq \VnE{r}{t-1}{i} \VnE{s}{t-1}{j}
            \norm{\At[t-1]} \norm{\Bt[t-1]} \,.
\end{align*}
Next, assume that step $t$ is an \Istep{}. Column $k$ of $\M{A}'$ is transformed so that
\begin{align*}
    \MnE{A}{t}{ik}
        &= \parenths*{\frac{
                \norm{\MnE{B}{t-1}{k,:}}
            }{
                \norm{\MnE{A}{t-1}{:,k}}
            }}^{\!\! \frac{1}{2}}
            \MnE{A}{t-1}{ik}
        \,, \\
    \intertext{and therefore}
    \norm{\MnE{A}{t}{:,k}}
        &= \parenths*{\frac{
                \norm{\MnE{B}{t-1}{k,:}}
            }{
                \norm{\MnE{A}{t-1}{:,k}}
            }}^{\!\! \frac{1}{2}}
            \! \norm{\MnE{A}{t-1}{:,k}}
        = \sqrt{
                \norm{\MnE{A}{t-1}{:,k}}
                \norm{\MnE{B}{t-1}{k,:}}
            }
        \numbereq\label{eqn:Istep-norm-col-At}
        \,, \\
    \intertext{and similarly}
    \norm{\MnE{B}{t}{k,:}}
        &= \sqrt{
                \norm{\MnE{A}{t-1}{:,k}}
                \norm{\MnE{B}{t-1}{k,:}}
            }
        \numbereq\label{eqn:Istep-norm-row-Bt}
    \,.
\end{align*}
Hence,
\begin{align*}
    \norm{\At} = \norm{\Bt}
        &= \max_k\sqrt{
                \norm{\MnE{A}{t-1}{:,k}}
                \norm{\MnE{B}{t-1}{k,:}}
            }
            \,, &
    \VnE{r}{t}{i} &= \VnE{r}{t-1}{i} \,, &
    \VnE{s}{t}{j} &= \VnE{s}{t-1}{j} \,,
\end{align*}
and therefore
\begin{align*}
    \VnE{r}{t}{i} \VnE{s}{t}{j} \norm{\At} \norm{\Bt}
        &= \VnE{r}{t-1}{i} \VnE{s}{t-1}{j}
            \parenths[\Big]{\max_k
                \sqrt{
                    \norm{\MnE{A}{t-1}{:,k}}
                    \norm{\MnE{B}{t-1}{k,:}}
                } \,
            }^2 \\
        &= \VnE{r}{t-1}{i} \VnE{s}{t-1}{j}
            \max_k\bigl(
                \norm{\MnE{A}{t-1}{:,k}}
                \norm{\MnE{B}{t-1}{k,:}}
            \bigr) \\
        &\leq \VnE{r}{t-1}{i} \VnE{s}{t-1}{j} \norm{\At[t-1]} \norm{\Bt[t-1]}
    \,.
\end{align*}
\end{proof}

Next, we prove that the factors in the sequence of \cref{thm:convergence-alt-scaling} converge individually. This is required in the subsequent analysis.
\begin{lemma}\label{lem:convergence-rti-stj-At-Bt}
The sequences $\VnE{r}{t}{i}$, $\VnE{s}{t}{j}$, $\norm{\At}$ and $\norm{\Bt}$ for $t = 0, 1, \dotsc$ converge.
\end{lemma}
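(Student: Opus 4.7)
The plan is to prove the structural fact that $\norm{\At} = \norm{\Bt} = 1$ for every $t \geq 3$, which makes convergence of these two sequences immediate, and then to deduce convergence of $\VnE{r}{t}{i}$ and $\VnE{s}{t}{j}$ from monotonicity. By construction every \Ostep{} produces $\norm{\At} = \norm{\Bt} = 1$, while \cref{eqn:Istep-norm-col-At,eqn:Istep-norm-row-Bt} give $\norm{\At} = \norm{\Bt} = \max_k \sqrt{\norm{\MnE{A}{t-1}{:,k}}\norm{\MnE{B}{t-1}{k,:}}}$ at every \Istep{}, which is at most $1$. The only question is whether this I-step maximum equals $1$.

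The substantive step is to show that for every \Istep{} with $t \geq 4$ this maximum equals $1$. Let $(\bar A, \bar B) := (\At[t-2], \Bt[t-2])$ be the state after the previous \Istep{}. By \cref{eqn:Istep-norm-col-At,eqn:Istep-norm-row-Bt} we have $\norm{\bar A_{:,k}} = \norm{\bar B_{k,:}} =: w_k$ for every $k$, and therefore $W := \norm{\bar A} = \norm{\bar B} = \max_k w_k$. Pick $k^*$ with $w_{k^*} = W$ and an index $i^*$ with $\abs{\bar a_{i^* k^*}} = W$; then $\norm{\bar A_{i^*,:}}$ is sandwiched between $\abs{\bar a_{i^* k^*}} = W$ and $\norm{\bar A} = W$, so equals $W$. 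After the \Ostep{} at step $t-1$, $\norm{\MnE{A}{t-1}{:,k^*}} \geq \abs{\bar a_{i^* k^*}}/\norm{\bar A_{i^*,:}} = 1$ and also $\leq \norm{\At[t-1]} = 1$, forcing equality. A symmetric argument yields $\norm{\MnE{B}{t-1}{k^*,:}} = 1$. Hence the $k^*$-term of the I-step maximum equals $\sqrt{1 \cdot 1} = 1$, and $\norm{\At} = 1$ follows.

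Combining this with the \Ostep{} values shows $\norm{\At} = \norm{\Bt} = 1$ for all $t \geq 3$, so both sequences are eventually constant and therefore converge. For $\VnE{r}{t}{i}$ and $\VnE{s}{t}{j}$, the algorithm makes no change at \Isteps{} and replaces $\VnE{r}{t}{i}$ by $\VnE{r}{t-1}{i}\norm{\MnE{A}{t-1}{i,:}}$ at an \Ostep{}. Whenever $t-1$ is an \Istep{}, i.e.\ $t \geq 3$, we have $\norm{\MnE{A}{t-1}{i,:}} \leq \norm{\At[t-1]} \leq 1$, so $\VnE{r}{t}{i} \leq \VnE{r}{t-1}{i}$; thus $\VnE{r}{t}{i}$ is monotonically nonincreasing for $t \geq 2$ and bounded below by zero, hence converges. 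The same argument handles $\VnE{s}{t}{j}$.

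The main obstacle is the $\norm{\At} = 1$ claim at I steps. Its crux is that after an \Istep{} the column max-norms of $\bm{A}'$ and the row max-norms of $\bm{B}'$ are indexed by a common inner index $k$, so the argmax $k^*$ of $\norm{\bar A_{:,k}}$ automatically attains $\max_k \norm{\bar B_{k,:}}$, and this coincidence is preserved through the subsequent \Ostep{}. Without this structural observation the I-step maxima could remain strictly below $1$ and the full sequences $\norm{\At}, \norm{\Bt}$ need only be bounded rather than convergent.
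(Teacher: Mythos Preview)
Your proof is correct and takes a genuinely different route from the paper's. Both arguments prove convergence of $\VnE{r}{t}{i}$ and $\VnE{s}{t}{j}$ by the same monotonicity reasoning (each O-step multiplies by a factor $\rpti{t}{i} = \norm{\MnE{A}{t-1}{i,:}} \leq \norm{\At[t-1]} \leq 1$). The difference lies in how convergence of $\norm{\At}$ and $\norm{\Bt}$ is obtained. The paper first establishes convergence of $\VnE{r}{t}{i}$ and $\VnE{s}{t}{j}$, then writes an explicit closed-form expression for $\MnE{A}{t}{ik}$ in terms of the original entries and the accumulated scalings, and invokes continuity to conclude that each entry (hence the norm) converges. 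You instead prove the stronger structural fact that $\norm{\At} = \norm{\Bt} = 1$ for all $t \geq 3$, so the norm sequences are eventually constant; your key observation is that after an I-step the column-norm vector of $\M{A}'$ and the row-norm vector of $\M{B}'$ coincide, forcing a shared argmax $k^*$, and this coincidence survives the next O-step to pin both $\norm{\MnE{A}{t-1}{:,k^*}}$ and $\norm{\MnE{B}{t-1}{k^*,:}}$ at $1$. Your approach is more elementary (no explicit formula, no continuity argument) and yields more information: it identifies the limit as $1$ and in fact shows the norm sequence stabilizes after three steps, whereas the paper only obtains $\norm{\At[\star]} = 1$ later, in the proof of \cref{thm:convergence-alt-scaling}, by combining convergence with the O-step subsequence. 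The paper's closed-form expression, on the other hand, is not needed elsewhere in the analysis, so nothing is lost by your shortcut.
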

\begin{proof}
As we show in the proof of \cref{lem:accuracy-alt-scaling},
\begin{align*}
    \norm{\At} = \norm{\Bt} &= 1 \,, \\
    \norm{\At[t+1]} = \norm{\Bt[t+1]}
        &= \max_k\sqrt{
                \norm{\MnE{A}{t}{:,k}}
                \norm{\MnE{B}{t}{k,:}}
            }
    && \text{for $t = t_0, t_0+2, \dotsc$} \,.
\end{align*}
Therefore
\begin{equation*}
    \norm{\At[t+1]}
        = \max_k\sqrt{
                \norm{\MnE{A}{t}{:,k}}
                \norm{\MnE{B}{t}{k,:}}
            }
        \leq \sqrt{\norm{\At}\norm{\Bt}} = 1
        \,,
\end{equation*}
and hence
\begin{equation*}
    \rpti{t+2}{i}
        = \norm{\MnE{A}{t+1}{i,:}} \leq \norm{\At[t+1]} \leq 1
         \,. \numbereq\label{eqn:rpti-leq-one}
\end{equation*}
The sequence $\VnE{r}{t}{i}$ satisfies
\begin{alignat*}{2}
    \VnE{r}{t_0}{i} &= \VnE{r}{t_0+1}{i} &&= \rpti{t_0}{i} \\
    \VnE{r}{t_0+2}{i} &= \VnE{r}{t_0+3}{i} &&= \rpti{t_0}{i} \rpti{t_0+2}{i} \\
                      &\vdotswithin{=}
                      &&\vdotswithin{=}
\end{alignat*}
It is nonnegative because $\rpti{t}{i} \geq 0$, it is monotonically nonincreasing by \cref{eqn:rpti-leq-one}, and hence it must converge. The same is true for $\VnE{s}{t}{j}$.

Consider the effect of the first $t$ steps of the iteration on $\M{A}'$ and $\M{B}'$. The cumulative effect of the \Osteps{} is to divide the rows of $\M{A}'$ by $\VnE{r}{t}{i}$ and the columns of $\M{B}'$ by $\VnE{s}{t}{j}$, and that of the \Isteps{} is to make sure that every column of $\M{A}'$ is equal in norm to the corresponding row of $\M{B}'$. Therefore
\begin{equation*}
    \MnE{A}{t}{ik}
        = \ME{A}{ik} \; \frac{1}{\VnE{r}{t}{i}}
            \parenths*{\frac{
                \max_j\abs[\big]{\ME{B}{kj} \big/ \VnE{s}{t-1}{j}}
            }{
                \max_i\abs[\big]{\ME{A}{ik} \big/ \VnE{r}{t-1}{i}}
        }}^{\!\! \frac{1}{2}}
    \qquad
    \text{for $t = t_0+1, t_0+2, \dotsc$}
    \,,
\end{equation*}
which shows that the convergence of $\VnE{r}{t}{i}$ and $\VnE{s}{t}{j}$ guarantees the convergence of $\MnE{A}{t}{ik}$, and hence also the convergence of $\norm{\At}$. The same is true for $\norm{\Bt}$.
\end{proof}

The following \lcnamecref{lem:convergence-wt} shows that the intermediate scaling factors that we compute in each step rapidly converge to 1. We use the notation
\begin{align*}
    \Sn{w}{t}
        &= \max\parenths[\Big]{
                \max_i\abs[\big]{\log\rpti{t}{i}},
                \max_j\abs[\big]{\log\sptj{t}{j}}
            }
        \,, \\
    \Sn{w}{t+1}
        &= \max_k\abs[\big]{\log\pptk{t+1}{k}}
        && \text{for $t = t_0, t_0+2, \dotsc$} \,.
\end{align*}
\begin{lemma}\label{lem:convergence-wt}
The following bounds hold:
\begin{align*}
    \Sn{w}{t} &\leq \Sn{w}{t-1} \,, \\
    \Sn{w}{t+1} &\leq 0.5 \Sn{w}{t}
        && \text{for $t = t_0+2, t_0+4, \dotsc$}
        \,.
\end{align*}
\end{lemma}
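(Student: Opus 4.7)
The argument rests on two structural invariants of the iteration. First, after every \Ostep{} each row of $\M{A}'$ and each column of $\M{B}'$ contains an entry of absolute value $1$, so $\norm{\MnE{A}{\tau}{i,:}} = \norm{\MnE{B}{\tau}{:,j}} = 1$. Second, after every \Istep{}, by \cref{eqn:Istep-norm-col-At,eqn:Istep-norm-row-Bt}, $\norm{\MnE{A}{\tau}{:,k}} = \norm{\MnE{B}{\tau}{k,:}}$ for every inner index $k$. I also use that $\rpti{t}{i},\sptj{t}{j}\leq 1$ for $t \geq t_0+2$ by \cref{eqn:rpti-leq-one}. Fix $t \in \{t_0+2,t_0+4,\dots\}$; then step $t$ is an \Ostep{}, step $t-1$ is an \Istep{}, and step $t-2$ is an \Ostep{}.

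For the first bound, I bound $\rpti{t}{i} = \norm{\MnE{A}{t-1}{i,:}}$ from below. Pick $k^*_i$ with $\abs{\MnE{A}{t-2}{i,k^*_i}}=1$, which exists because of the \Ostep{} at $t-2$. The \Istep{} at $t-1$ multiplies column $k^*_i$ of $\M{A}'$ by $\pptk{t-1}{k^*_i}$, so
\begin{equation*}
    \rpti{t}{i} \;\geq\; \abs{\MnE{A}{t-1}{i,k^*_i}} \;=\; \pptk{t-1}{k^*_i} \;\geq\; \min_k \pptk{t-1}{k} \;\geq\; e^{-\Sn{w}{t-1}}.
\end{equation*}
Combined with $\rpti{t}{i} \leq 1$, this yields $\abs{\log\rpti{t}{i}}\leq \Sn{w}{t-1}$; the symmetric argument applied to rows of $\M{B}'$ gives $\abs{\log\sptj{t}{j}}\leq \Sn{w}{t-1}$, and taking maxima completes this half.

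For the second bound, let $u_k = \norm{\MnE{A}{t}{:,k}}$ and $v_k = \norm{\MnE{B}{t}{k,:}}$, so that $\abs{\log \pptk{t+1}{k}} = \tfrac{1}{2}\abs{\log v_k - \log u_k}$. The key observation is that the common factor $z_k := \norm{\MnE{A}{t-1}{:,k}} = \norm{\MnE{B}{t-1}{k,:}}$ produced by the \Istep{} at $t-1$ cancels in the ratio $v_k/u_k$. Writing $u_k = z_k \tilde u_k$ with
\begin{equation*}
    \tilde u_k \;=\; \max_i \frac{\abs{\MnE{A}{t-1}{ik}}/z_k}{\rpti{t}{i}},
\end{equation*}
the numerator lies in $[0,1]$ and equals $1$ at the column-$k$ argmax, while $\rpti{t}{i}\leq 1$; hence $\tilde u_k \in [1,\,1/\min_i \rpti{t}{i}]$, and symmetrically $\tilde v_k \in [1,\,1/\min_j \sptj{t}{j}]$. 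Therefore $v_k/u_k = \tilde v_k/\tilde u_k$ with $\log\tilde u_k,\log\tilde v_k \in [0,\Sn{w}{t}]$, so their difference has absolute value at most $\Sn{w}{t}$. Thus $\abs{\log\pptk{t+1}{k}}\leq \tfrac{1}{2}\Sn{w}{t}$; maximizing over $k$ proves the bound.

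The delicate step is the second bound: the cancellation of $z_k$ in the ratio $v_k/u_k$ is what produces the factor $\tfrac{1}{2}$, and it relies squarely on the preceding \Istep{} having synchronized the column norm of $\M{A}'$ with the row norm of $\M{B}'$ at every inner index. Without that \Istep{}, those norms would in general differ and only a weak contraction would be available, losing the geometric---and, per the subsequent \cref{thm:convergence-alt-scaling}, ultimately quadratic---convergence rate. The first bound, by contrast, is comparatively routine once one observes that an \Istep{} cannot shrink a row of $\M{A}'$ below $e^{-\Sn{w}{t-1}}$ because that row already contained a unit entry from the prior \Ostep{}.
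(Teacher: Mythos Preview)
Your proof is correct and follows essentially the same route as the paper. For the first bound you and the paper both exploit that after the \Ostep{} at $t-2$ each row of $\M{A}'$ has a unit entry, so the subsequent \Istep{} can shrink that row by at most a factor $\min_k\pptk{t-1}{k}\geq e^{-\Sn{w}{t-1}}$; for the second bound both arguments hinge on the equality $\norm{\MnE{A}{t-1}{:,k}}=\norm{\MnE{B}{t-1}{k,:}}$ from the \Istep{} at $t-1$, which makes this common factor cancel in the ratio $v_k/u_k$, together with $\rpti{t}{i},\sptj{t}{j}\leq 1$. Your $\tilde u_k,\tilde v_k$ notation packages the cancellation a bit more explicitly than the paper's separate upper and lower bounds on $(\pptk{t+1}{k})^2$, but the underlying computation is the same.
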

\begin{proof}
Assume that $t = t_0+2, t_0+4, \dotsc \,.$ Because step $t-2$ is an \Ostep{}, there is a column $g$ so that $\abs[\big]{\MnE{A}{t-2}{ig}} = 1$, and therefore
\begin{equation*}
    \rpti{t}{i}
        = \max_k\abs[\big]{\MnE{A}{t-1}{ik}}
        = \max_k\abs[\big]{\MnE{A}{t-2}{ik} \pptk{t-1}{k}}
        \geq \abs[\big]{\MnE{A}{t-2}{ig} \pptk{t-1}{g}}
        = \pptk{t-1}{g}
    \,.
\end{equation*}
Taking logarithms yields
\begin{align*}
    \log\rpti{t}{i} &\geq \log\pptk{t-1}{g} \,. \\
    \intertext{Both sides of this inequality are nonpositive because $\rpti{t}{i} \leq 1$ by \cref{eqn:rpti-leq-one}, and so}
    \abs[\big]{\log\rpti{t}{i}}
        = -\log\rpti{t}{i}
        &\leq -\log\pptk{t-1}{g}
        = \abs[\big]{\log\pptk{t-1}{g}}
    \,. \\
    \intertext{A similar analysis shows that}
    \abs[\big]{\log\sptj{t}{j}}
        &\leq \abs[\big]{\log\pptk{t-1}{f}}
    \,,
\end{align*}
for a suitably defined row $f$, and these two inequalities imply the first bound in the statement of the \lcnamecref{lem:convergence-wt}.

Next, let us prove the second bound. We have that
\begin{equation*}
    \parenths[\big]{\pptk{t+1}{k}}^2
        = \frac{
                \max_j\abs[\big]{\MnE{B}{t}{kj}}
            }{
                \max_i\abs[\big]{\MnE{A}{t}{ik}}
            }
        = \frac{
                \max_j\abs[\big]{\MnE{B}{t-1}{kj} \big/ \sptj{t}{j}}
            }{
                \max_i\abs[\big]{\MnE{A}{t-1}{ik} \big/ \rpti{t}{i}}
            }
        \leq \frac{
                \max_j\abs[\big]{\MnE{B}{t-1}{kj}} \;
                \max_j\bigl(1 \big/ \sptj{t}{j}\bigr)
            }{
                \max_i\abs[\big]{\MnE{A}{t-1}{ik} \big/ \rpti{t}{i}}
            }
        \,.
\end{equation*}
Inequality \cref{eqn:rpti-leq-one} states that $\rpti{t}{i} \leq 1$, and therefore
\begin{equation*}
    \max_i\abs[\big]{\MnE{A}{t-1}{ik} \big/ \rpti{t}{i}}
        \geq \max_i\abs[\big]{\MnE{A}{t-1}{ik}} \,,
\end{equation*}
which we substitute into the previous inequality, obtaining
\begin{equation*}
    \parenths[\big]{\pptk{t+1}{k}}^2
        \leq \frac{
                \max_j\abs[\big]{\MnE{B}{t-1}{kj}} \;
                \max_j\bigl(1 \big/ \sptj{t}{j}\bigr)
            }{
                \max_i\abs[\big]{\MnE{A}{t-1}{ik}}
            }
        \,.
\end{equation*}
By \cref{eqn:Istep-norm-col-At,eqn:Istep-norm-row-Bt}
\begin{equation*}
    \max_i\abs[\big]{\MnE{A}{t-1}{ik}}
        = \norm{\MnE{A}{t-1}{:,k}}
        = \norm{\MnE{B}{t-1}{k,:}}
        = \max_j\abs[\big]{\MnE{B}{t-1}{kj}} \,,
\end{equation*}
which implies
\begin{align*}
    \parenths[\big]{\pptk{t+1}{k}}^2
        &\leq \max\nolimits_j\parenths[\big]{1 \big/ \sptj{t}{j}}
        \,. \\
    \intertext{A similar analysis shows that}
    \parenths[\big]{\pptk{t+1}{k}}^2
        &\geq \frac{1}{\max_i\parenths[\big]{1 \big/ \rpti{t}{i}}}
    \,.
\end{align*}
Taking the logarithm of these two bounds and interchanging the positions of the logarithms with those of the $\max$ operators yields
\begin{equation*}
    -\max_i\parenths[\Big]{\log\parenths[\big]{1 \big/ \rpti{t}{i}}}
        \leq 2\log\pptk{t+1}{k}
        \leq \max_j\parenths[\Big]{\log\parenths[\big]{1 \big/ \sptj{t}{j}}}
    \,.
\end{equation*}
Because $\rpti{t}{i} \leq 1$ we have that $\log\parenths[\big]{1 \big/ \rpti{t}{i}} = \abs[\big]{\log\rpti{t}{i}}$, and similarly for $\sptj{t}{j}$. Applying this to the previous inequality yields
\begin{equation*}
    -\max_i\abs[\big]{\log\rpti{t}{i}}
        \leq 2\log\pptk{t+1}{k}
        \leq \max_j\abs[\big]{\log\sptj{t}{j}}
    \,,
\end{equation*}
and therefore
\begin{equation*}
    2\abs[\big]{\log\pptk{t+1}{k}}
        \leq \max\parenths[\Big]{
                \max_i\abs[\big]{\log\rpti{t}{i}},
                \max_j\abs[\big]{\log\sptj{t}{j}}
            }
    \,,
\end{equation*}
which implies the second bound in the statement of the \lcnamecref{lem:convergence-wt}.
\end{proof}

The following lemma proves linear convergence, and therefore completes the proof of \cref{thm:convergence-alt-scaling}.
\begin{lemma}\label{lem:convergence-nu}
There is a sequence $\Sn{\nu}{t}$ so that
\begin{align*}
    \VnE{\mu}{t}{ij} &\leq \Sn{\nu}{t} \,, &
        \VnE{\mu}{t+1}{ij} &\leq \Sn{\nu}{t+1} \,, \\
    \shortintertext{and}
    \Sn{\nu}{t+1} &= \Sn{\nu}{t} \,, &
    \Sn{\nu}{t+2} &\leq 0.5 \, \Sn{\nu}{t} &
        &\text{for $t = t_0, t_0+2, \dotsc .$}
\end{align*}
\end{lemma}
\begin{proof}
Assume that $t = t_0, t_0 + 2, \dotsc .$
Rearranging the definition of $\VnE{\mu}{t}{ij}$, we may write
\begin{equation*}
    \VnE{\mu}{t}{ij}
    = \parenths[\big]{
            \VnE{r}{t}{i} \VnE{s}{t}{j}
            \norm{\At} \norm{\Bt}
        }
        \,
        \parenths[\big]{
            \VnE{r}{\star}{i} \VnE{s}{\star}{j}
            \norm{\At[\star]} \norm{\Bt[\star]}
        }^{-1}
        -1 \,.
\end{equation*}
We have that
\begin{align*}
    \VnE{r}{t}{i}
        &= \rpti{t_0}{i} \rpti{t_0+2}{i} \dotsm \rpti{t}{i} \,, &
    \VnE{s}{t}{j}
        &= \sptj{t_0}{j} \sptj{t_0+2}{j} \dotsm \sptj{t}{j} \,, &
    \norm{\At} = \norm{\Bt} &= 1 \,, \\
    \intertext{and therefore}
    \VnE{r}{\star}{i}
        &= \rpti{t_0}{i} \rpti{t_0+2}{i} \dotsm \,, &
    \VnE{s}{\star}{j}
        &= \sptj{t_0}{j} \sptj{t_0+2}{j} \dotsm \,, &
    \norm{\At[\star]} = \norm{\Bt[\star]} &= 1 \,.
\end{align*}
Substituting this into the above yields
\begin{align*}
    \VnE{\mu}{t}{ij}
    &= \parenths[\big]{
            \rpti{t_0}{i} \sptj{t_0}{j}
            \rpti{t_0 + 2}{i} \sptj{t_0 + 2}{j}
            \dotsm
            \rpti{t}{i} \sptj{t}{j}
        }
        \,
        \parenths[\big]{
            \rpti{t_0}{i} \sptj{t_0}{j}
            \rpti{t_0 + 2}{i} \sptj{t_0 + 2}{j}
            \dotsm
        }^{-1}
        -1 \\
    &= \parenths[\big]{
            \rpti{t + 2}{i} \sptj{t + 2}{j}
            \rpti{t + 4}{i} \sptj{t + 4}{j}
            \dotsm
        }^{-1}
        -1 \,.
\end{align*}
Applying the definition of $\Sn{w}{t}$ to this, we obtain
\begin{align*}
    \VnE{\mu}{t}{ij} &= \parenths[\big]{
            \rpti{t + 2}{i} \sptj{t + 2}{j}
            \rpti{t + 4}{i} \sptj{t + 4}{j}
            \dotsm
        }^{-1}
        -1  \\
    &= \exp\parenths[\big]{
            -\log\rpti{t + 2}{i} - \log\sptj{t + 2}{j}
            - \log\rpti{t + 4}{i} - \log\sptj{t + 4}{j}
            - \dotsb
        }
        -1 \\
    &= \exp\parenths[\big]{
            \abs[\big]{\log\rpti{t + 2}{i}} + \abs[\big]{\log\sptj{t + 2}{j}}
            + \abs[\big]{\log\rpti{t + 4}{i}} + \abs[\big]{\log\sptj{t + 4}{j}}
            + \dotsb
        }
        -1 \\
    &\leq \exp\parenths[\big]{2\Sn{w}{t+2} + 2\Sn{w}{t+4} + \dotsb} - 1
    \,.
\end{align*}
We define
\begin{equation*}
    \Sn{\nu}{t}
        = \exp\parenths[\big]{2\Sn{w}{t+2} + 2\Sn{w}{t+4} + \dotsb} - 1
    \,, \qquad
    \Sn{\nu}{t+1} = \Sn{\nu}{t} \,,
\end{equation*}
thereby guaranteeing that $\VnE{\mu}{t}{ij} \leq \Sn{\nu}{t}$, as the \lcnamecref{lem:convergence-nu} states. Since $\VnE{r}{t}{i} \VnE{s}{t}{j} \norm{\At} \norm{\Bt}$ is monotonically nonincreasing, so is its relative distance to its limit, meaning that
\begin{align*}
    \VnE{\mu}{t+1}{ij} &\leq \VnE{\mu}{t}{ij} \\
    \shortintertext{and hence}
    \VnE{\mu}{t+1}{ij} &\leq \VnE{\mu}{t}{ij} \leq \Sn{\nu}{t} = \Sn{\nu}{t+1}
    \,.
\end{align*}
This proves another condition in the statement of the \lcnamecref{lem:convergence-nu}, leaving us only with the condition $\Sn{\nu}{t+2} \leq 0.5 \, \Sn{\nu}{t}$ to prove.

By \cref{lem:convergence-wt},
\begin{alignat*}{4}
    \Sn{w}{t+3} &\leq 0.5\Sn{w}{t+2} && \qquad &
        \Sn{w}{t+4} &\leq \Sn{w}{t+3} &&\leq 0.5\Sn{w}{t+2}
    \\
    \Sn{w}{t+5} &\leq 0.5\Sn{w}{t+4} &&\leq 0.25\Sn{w}{t+2} \qquad &
        \Sn{w}{t+6} &\leq \Sn{w}{t+5} &&\leq 0.25\Sn{w}{t+2} \\
    &\vdotswithin{\leq} &&\vdotswithin{\leq} &
        &\vdotswithin{\leq} &&\vdotswithin{\leq}
\end{alignat*}
Therefore
\begin{align*}
    \Sn{w}{t+2} &= \parenths{0.5 + 0.25 + \dotsb} \Sn{w}{t+2} \\
        &= 0.5\Sn{w}{t+2} + 0.25\Sn{w}{t+2} + \dotsb \\
        &\geq \Sn{w}{t+4} + \Sn{w}{t+6} + \dotsb \,,
    \intertext{and thus}
    \exp\parenths[\big]{2\Sn{w}{t+2}}
        &\geq \exp\parenths[\big]{2\Sn{w}{t+4} + 2\Sn{w}{t+6} + \dotsb} \,.
\end{align*}
Applying this bound to the definition of $\Sn{\nu}{t}$, we obtain
\begin{align*}
    \Sn{\nu}{t} &= \exp\parenths[\big]{2\Sn{w}{t+2} + 2\Sn{w}{t+4} + \dotsb} - 1 \\
    &= \exp\parenths[\big]{2\Sn{w}{t+2}}
        \exp\parenths[\big]{2\Sn{w}{t+4} + 2\Sn{w}{t+6} + \dotsb} - 1 \\
    &\geq \parenths[\Big]{
            \exp\parenths[\big]{2\Sn{w}{t+4} + 2\Sn{w}{t+6} + \dotsb}
        }^2 -1 \,.
\end{align*}
We define $x = \exp\parenths[\big]{2\Sn{w}{t+4} + 2\Sn{w}{t+6} + \dotsb}$, so that the above expression has the form $x^2 - 1$ and $\Sn{\nu}{t+2} = x - 1$. The rest of the proof is:
\begin{equation*}
    \Sn{\nu}{t} \geq x^2 - 1
    = \parenths{2 + x - 1}\parenths{x - 1}
    = \parenths[\big]{2 + \Sn{\nu}{t+2}} \, \Sn{\nu}{t+2}
    \geq 2 \, \Sn{\nu}{t+2} \,,
\end{equation*}
which implies that $\Sn{\nu}{t+2} \leq 0.5 \, \Sn{\nu}{t}$ as the \lcnamecref{lem:convergence-nu} states.
\end{proof}

Finally, we show that the analysis in \cref{lem:convergence-nu} is asymptotically sharp.
\begin{lemma}\label{lem:alt-scaling-example}
There are matrices $\M{A}$ and $\M{B}$ and indices $i$ and $j$ so that
\begin{equation*}
    \VnE{\mu}{t+1}{ij} = \VnE{\mu}{t}{ij} \,, \qquad
    \VnE{\mu}{t+2}{ij} \big/ \VnE{\mu}{t}{ij} \to 0.5 \qquad
    \text{for $t = t_0, t_0+2, \dotsc.$}
\end{equation*}
\end{lemma}
\begin{proof}
Let
\begin{equation*}
    \M{A} = \begin{bmatrix}
            1 & 0\\
            1 & 2^{-2^v}
        \end{bmatrix}
        \,, \qquad
    \M{B} = \begin{bmatrix}
            1 & 0\\
            0 & 1
        \end{bmatrix}
\end{equation*}
for some integer $v$, let us start the iteration with an \Ostep{}, and assume that $t = t_0, t_0+2, \dotsc.$ A straightforward calculation, which we omit for brevity, shows that
\begin{align*}
    \At &= \begin{bmatrix}
            1 & 0\\
            1 & 2^{-2^{v - (t - t_0)/2}}
        \end{bmatrix}
        \,, &
    \begin{bmatrix} \VnE{r}{t}{1} \\ \VnE{r}{t}{2} \end{bmatrix}
        &= \begin{bmatrix}
            \vphantom{\VnE{r}{t}{1}} 1 \\
            \vphantom{\VnE{r}{t}{2}} 1
        \end{bmatrix}
        \,, \\
    \Bt &= \begin{bmatrix}
            1 & 0\\
            0 & 1
        \end{bmatrix}
        \,, &
    \begin{bmatrix} \VnE{s}{t}{1} \\ \VnE{s}{t}{2} \end{bmatrix}
        &= \begin{bmatrix*}[l]
            \vphantom{\VnE{s}{t}{1}} 1 \\
            \vphantom{\VnE{s}{t}{2}} 2^{-2^v(1 - 2^{-(t-t_0)/2})}
        \end{bmatrix*}
        \,, \\
    \At[t+1] &= \begin{bmatrix}
            1 & 0\\
            1 & 2^{-2^{v - (t - t_0)/2 - 1}}
        \end{bmatrix}
        \,, &
    \begin{bmatrix} \VnE{r}{t+1}{1} \\ \VnE{r}{t+1}{2} \end{bmatrix}
        &= \begin{bmatrix} \VnE{r}{t}{1} \\ \VnE{r}{t}{2} \end{bmatrix}
        \,, \\
    \Bt[t+1] &= \begin{bmatrix}
            1 & 0\\
            0 & 2^{-2^{v - (t - t_0)/2 - 1}}
        \end{bmatrix}
        \,, &
    \begin{bmatrix} \VnE{s}{t+1}{1} \\ \VnE{s}{t+1}{2} \end{bmatrix}
        &= \begin{bmatrix} \VnE{s}{t}{1} \\ \VnE{s}{t}{2} \end{bmatrix}
        \,,
\end{align*}
and therefore
\begin{equation*}
    \At[\star] = \begin{bmatrix}
            1 & 0\\
            1 & 1
        \end{bmatrix}
        \,, \qquad
    \Bt[\star] = \begin{bmatrix}
            1 & 0\\
            0 & 1
        \end{bmatrix}
        \,, \qquad
    \begin{bmatrix} \VnE{r}{\star}{1} \\ \VnE{r}{\star}{2} \end{bmatrix}
        = \begin{bmatrix}
            \vphantom{\VnE{r}{\star}{1}} 1 \\
            \vphantom{\VnE{r}{\star}{2}} 1
        \end{bmatrix}
        \,, \qquad
    \begin{bmatrix} \VnE{s}{\star}{1} \\ \VnE{s}{\star}{2} \end{bmatrix}
        = \begin{bmatrix*}[l]
            \vphantom{\VnE{s}{\star}{1}} 1 \\
            \vphantom{\VnE{s}{\star}{2}} 2^{-2^v}
        \end{bmatrix*}
\end{equation*}
and
\begin{equation*}
    \norm{\At} = \norm{\Bt}
    = \norm{\At[t+1]} = \norm{\Bt[t+1]}
    = \norm{\At[\star]} = \norm{\Bt[\star]}
    = 1 \,.
\end{equation*}
Substituting the above into the definition of $\VnE{\mu}{t}{ij}$ yields
\begin{equation}\begin{aligned}\label{eqn:cvg-example-mu-22}
    \VnE{\mu}{t}{22} = \VnE{\mu}{t+1}{22} &= \frac{\abs[\big]{
            \VnE{r}{t}{2} \VnE{s}{t}{2} \norm{\At} \norm{\Bt}
            -
            \VnE{r}{\star}{2} \VnE{s}{\star}{2} \norm{\At[\star]} \norm{\Bt[\star]}
        }}{\abs[\big]{
            \VnE{r}{\star}{2} \VnE{s}{\star}{2} \norm{\At[\star]} \norm{\Bt[\star]}
        }} \\
    &= \frac{\abs[\big]{
            2^{-2^v(1 - 2^{-(t-t_0)/2})} - 2^{-2^v}
        }}{\abs[\big]{
            2^{-2^v}
        }} \\
    &= 2^{2^{v - (t - t_0)/2}} - 1\,.
\end{aligned}\end{equation}
Letting $x = 2^{2^{v - (t - t_0)/2 - 1}}$, so that the above expression has the form $x^2 - 1$ and $\VnE{\mu}{t+2}{22} = x - 1$, we have that
\begin{equation*}
    \VnE{\mu}{t}{22} = x^2 - 1 = (2 + x - 1)(x - 1)
    = \parenths[\big]{2 + \VnE{\mu}{t+2}{22}} \, \VnE{\mu}{t+2}{22} \,,
\end{equation*}
and therefore
\begin{equation*}
    \VnE{\mu}{t+2}{22} \big/ \VnE{\mu}{t}{22} = 1 \big/ \parenths[\big]{2 + \VnE{\mu}{t+2}{22}} \; \,.
\end{equation*}
From \cref{eqn:cvg-example-mu-22} we conclude that $\VnE{\mu}{t+2}{22} \to 0,$ and therefore $\VnE{\mu}{t+2}{22} \big/ \VnE{\mu}{t}{22} \to 0.5.$
\end{proof}

\section{Proof of \cref{thm:stopping-criterion}}
\label{app:stopping-criterion}

\begin{proof}
In the proof of \cref{lem:convergence-nu} (see \cref{app:convergence-alt-scaling}), we show that for all $i,j$ and $t=t_0,t_0+2, \dotsc$,
\begin{align*}
    \VnE{\mu}{t+1}{ij} &\leq \VnE{\mu}{t}{ij} \,, \\
    \VnE{\mu}{t}{ij}
        &\leq \exp\parenths[\big]{2\Sn{w}{t+2} + 2\Sn{w}{t+4} + \dotsb} - 1
        \,, \\
    \exp\parenths[\big]{2\Sn{w}{t+4} + 2\Sn{w}{t+6} + \dotsb}
        &\leq \exp\parenths[\big]{2\Sn{w}{t+2}}
        \,.
\end{align*}
Putting these three statements together yields
\begin{align*}
    \VnE{\mu}{t+1}{ij} &\leq \VnE{\mu}{t}{ij}
        \leq \exp\parenths[\big]{2\Sn{w}{t+2} + 2\Sn{w}{t+4} + \dotsb} - 1 \\
        &= \exp\parenths[\big]{2\Sn{w}{t+2}}
            \exp\parenths[\big]{2\Sn{w}{t+4} + 2\Sn{w}{t+6} + \dotsb} - 1 \\
        &\leq \exp\parenths[\big]{2\Sn{w}{t+2}}
            \exp\parenths[\big]{2\Sn{w}{t+2}} - 1 \\
        &= \exp\parenths[\big]{4\Sn{w}{t+2}} - 1 \,.
    \intertext{\Cref{lem:convergence-wt} guarantees that $\Sn{w}{t+2} \leq \Sn{w}{t+1}$, and substituting this into the above yields}
    \VnE{\mu}{t+1}{ij} &\leq \exp\parenths[\big]{4\Sn{w}{t+1}} - 1
        \,. \numbereq\label{eqn:mu-bound-Istep}
    \intertext{Similarly,}
    \VnE{\mu}{t+2}{ij}
        &\leq \exp\parenths[\big]{2\Sn{w}{t+4} + 2\Sn{w}{t+6} + \dotsb} - 1
        \leq \exp\parenths[\big]{2\Sn{w}{t+2}} - 1
        \,. \numbereq\label{eqn:mu-bound-Ostep}
\end{align*}

Next, let us prove the first statement of the \lcnamecref{thm:stopping-criterion}. Assume that
\begin{equation*}
    (1 + \tau)^{-\frac{1}{4}} \leq \pptk{t+1}{k}
        \leq (1 + \tau)^\frac{1}{4}
\end{equation*}
for all $k$.
Taking logarithms yields
\begin{equation*}
    -0.25\log(1 + \tau) \leq \log\pptk{t+1}{k}
        \leq 0.25\log(1 + \tau) \,,
\end{equation*}
or equivalently
\begin{equation*}
    \abs[\big]{\log\pptk{t+1}{k}} \leq 0.25\log(1 + \tau) \,,
\end{equation*}
and therefore
\begin{equation*}
    \Sn{w}{t+1} = \max_k\abs[\big]{\log\pptk{t+1}{k}} \leq 0.25\log(1 + \tau) \,.
\end{equation*}
Substituting this into \cref{eqn:mu-bound-Istep}, we find that
\begin{equation*}
    \VnE{\mu}{t+1}{ij} \leq \exp\parenths[\big]{4\Sn{w}{t+1}} - 1
        \leq \exp\parenths[\big]{4 \cdot 0.25 \log(1 + \tau)} - 1
        = \tau \,,
\end{equation*}
for all $i,j$, which proves the first statement of the \lcnamecref{thm:stopping-criterion}.

Let us prove the second statement of the \lcnamecref{thm:stopping-criterion}. Assume that
\begin{equation*}
    (1 + \tau)^{-\frac{1}{2}} \leq \rpti{t+2}{i} \,, \qquad
        (1 + \tau)^{-\frac{1}{2}} \leq \sptj{t+2}{j} \\
\end{equation*}
for all $i$ and $j$.
Taking logarithms yields
\begin{equation*}
    -0.5\log(1 + \tau) \leq \log\rpti{t+2}{i} \,, \qquad
        -0.5\log(1 + \tau) \leq \log\sptj{t+2}{j} \,.
\end{equation*}
We show in the proof of \cref{lem:convergence-rti-stj-At-Bt} that  $\rpti{t+2}{i} \leq 1$, and therefore $\log\rpti{t+2}{i} \leq 0$ and similarly for $\sptj{t+2}{j}$. Hence
\begin{equation*}
    \abs[\big]{\log\rpti{t+2}{i}} \leq 0.5\log(1 + \tau) \,, \qquad
        \abs[\big]{\log\sptj{t+2}{j}} \leq 0.5\log(1 + \tau) \,,
\end{equation*}
and hence
\begin{equation*}
    \Sn{w}{t+2} = \max\parenths[\Big]{
        \max_i \abs[\big]{\log\rpti{t+2}{i}},
        \max_j \abs[\big]{\log\sptj{t+2}{j}}
        }
    \leq 0.5\log(1 + \tau) \,.
\end{equation*}
Substituting this into \cref{eqn:mu-bound-Ostep} yields
\begin{equation*}
    \VnE{\mu}{t+2}{ij} \leq \exp\parenths[\big]{2\Sn{w}{t+2}} - 1
    \leq \exp\parenths[\big]{2 \cdot 0.5\log(1+\tau)} - 1
    = \tau \,,
\end{equation*}
for all $i,j$, which proves the second statement of the \lcnamecref{thm:stopping-criterion}.
\end{proof}

\end{document}